\documentclass[10pt]{amsart}
\usepackage{hyperref}
\usepackage{amsmath}
\usepackage{amssymb}
\usepackage{xypic}
\usepackage{yhmath}
\usepackage{setspace}
\def\beqnn{\begin{eqnarray*}}\def\eeqnn{\end{eqnarray*}}

\newtheorem{theorem}{Theorem}[section]
\newtheorem{theorem*}{Theorem}
\newtheorem{lemma}[theorem]{Lemma}

\theoremstyle{definition}    % 定义样式：粗体标题，正体正文

\newtheorem{remark}[theorem]{Remark}

\numberwithin{equation}{section}

\begin{document}

\begin{center}
\title[Hardy-Littlewood-P\'{o}lya-type operators]{$l^{p}-L^{q}$ boundedness of sequence-to-function Hardy-Littlewood-P\'{o}lya-type operators}
\end{center}

%    Information for first author
\author{Jianjun Jin }
%    Address of record for the research reported here
\address{School of Mathematics Sciences, Hefei University of Technology, Xuancheng Campus, Xuancheng 242000, P.R.China}
%    Current address
\email{jin@hfut.edu.cn, jinjjhb@163.com}
%    \thanks will become a 1st page footnote.
%\thanks{The author was supported by National Natural Science Foundation of China (Grant Nos. 11501157).}
%    Information for second author
%\author{Huabing Li}
%    Address of record for the research reported here
%\address{School of Mathematics Sciences, Hefei University of Technology, Xuancheng Campus, Xuancheng 242000, P.R.China}
%    Current address
%\email{musicli121@163.com}
%    \thanks will become a 1st page footnote.
%\thanks{The authors were supported by National Natural Science Foundation of China (Grant Nos. 11501157).}
%    Information for second author
%\author{Author Two}
%\address{Mathematical Research Section, School of Mathematical Sciences,
%Australian National University, Canberra ACT 2601, Australia}
%\email{two@maths.univ.edu.au}
%\thanks{Support information for the second author.}

%    General info
\subjclass[2020]{47B37, 47A30}

%\date{February 9, 2013.}

%\dedicatory{This paper is dedicated to our advisors.}

\keywords{Sequence-to-function Hardy-Littlewood-P\'{o}lya-type operators, Boundedness of operator; norm of operator.}
\begin{abstract} 
In this paper we deal with the boundedness of certain generalized sequence-to-function Hardy-Littlewood-P\'{o}lya-type operators. We completely characterize the $l^{p}-L^{q}$ boundedness of these operators for all $(p, q)\in [1, \infty]\times[1, \infty]$. These results complement some previous results in the literature. Our method relies on the ideas of generalized Schur’s tests developed by Okikiolu, Sinnamon, Tao and Zhao.  
\end{abstract} 
\maketitle

\section{\bf {Introduction and main results} }
Let $p\geq 1$, we use $p'$ to denote the conjugate of $p$,  i.e., $\frac{1}{p}+\frac{1}{p'}=1$. In particular, $p'=\infty$ when $p=1$. Let $\mathbb{R}_{+}=(0,+\infty)$ and let $\mathcal{M}(\mathbb{R}_{+})$ be the class of all real-valued measurable functions on $\mathbb{R}_{+}.$ Let $L^p:=L^p(\mathbb{R}_{+})$ be the usual Lebesgue space on $\mathbb{R}_{+}$, i.e.,
\begin{equation*}
L^p(\mathbb{R}_{+})=\{f\in \mathcal{M}(\mathbb{R}_{+}): \|f\|_p=(\int_{\mathbb{R}_{+}}
|f(x)|^pdx)^{\frac{1}{p}}<\infty\}.
\end{equation*}
When $p>1$. If $f \in L^p, g \in L^{p'}$,  then we have the famous Hardy-Littlewood-P\'{o}lya (${\mathcal{HLP}}$) inequality as
\begin{equation}\label{HLP}
\Big|\int_{\mathbb{R}_{+}}
\int_{\mathbb{R}_{+}}\frac{f(x)g(y)}{\max\{x, y\}}dxdy\Big|\leq (p+p')\|f\|_{p}\|g\|_{p'},
\end{equation}
where the constant $p+p'$ in (\ref{HLP}) is the best possible, see \cite{HLP}. (\ref{HLP}) has the following equivalent form
\begin{equation}\label{HLP-1}
\left[\int_{\mathbb{R}_{+}}
\left|\int_{\mathbb{R}_{+}}\frac{f(y)}{\max\{x,y\}}dy\right|^p
dx\right]^{\frac{1}{p}}\leq (p+p')\|f\|_{p}.
\end{equation}
(\ref{HLP-1}) is also called ${\mathcal{HLP}}$ inequality and the constant $p+p'$ in (\ref{HLP-1}) is still the best possible.  

We can restate the ${\mathcal{HLP}}$ inequality (\ref{HLP-1}) in the language of operator theory. We define the ${\mathcal{HLP}}$ integral operator $H$ induced by ${\mathcal{HLP}}$ kernel $\frac{1}{\max\{x,y\}}$ as 
\begin{equation*}Hf(y):=\int_{\mathbb{R}_{+}} \frac{f(x)}{\max\{x,y\}}dx,\: f \in \mathcal{M}(\mathbb{R}_{+}),\: y\in \mathbb{R}_{+}.
\end{equation*}
Then we have
\begin{theorem}\label{fir}Let $p>1$. Let $H$ be as above. Then $H$ is bounded on $L^p(\mathbb{R}_+)$ and the norm $\|H\|_{L^p\rightarrow L^p}$ of $H$ is $pp'$. Here
\begin{equation*}
\|H\|_{L^p\rightarrow L^p}=\sup_{f\in L^p(\mathbb{R}_{+})}
\frac{\|Hf\|_{p}}{\|f\|_{p}}.
\end{equation*}
\end{theorem}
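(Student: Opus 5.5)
Since $\frac1p+\frac1{p'}=1$ gives $p+p'=pp'$, the claim is that $\|H\|_{L^p\rightarrow L^p}=p+p'$. This matches the constant in (\ref{HLP-1}). I would prove the upper bound with Minkowski's integral inequality, using the homogeneity of the kernel. I would prove the lower bound with a standard family of near-extremal power functions.

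\emph{Upper bound.} The kernel $\frac{1}{\max\{x,y\}}$ is homogeneous of degree $-1$. Substituting $x=yt$ in the definition of $H$ gives
\begin{equation*}
Hf(y)=\int_{\mathbb{R}_+}\frac{f(yt)}{\max\{1,t\}}\,dt .
\end{equation*}
Minkowski's integral inequality, together with $\|f(\cdot\, t)\|_p=t^{-1/p}\|f\|_p$, then yields
\begin{equation*}
\|Hf\|_p\le \|f\|_p\int_{\mathbb{R}_+}\frac{t^{-1/p}}{\max\{1,t\}}\,dt=\|f\|_p\Big(\int_0^1 t^{-1/p}dt+\int_1^\infty t^{-1-1/p}dt\Big)=(p'+p)\|f\|_p .
\end{equation*}
Both integrals are finite precisely because $1<p<\infty$. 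As an alternative, one could run Schur's test with the weight $x^{-1/(pp')}$, which gives the same constant.

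\emph{Lower bound.} For $\varepsilon>0$, take $f_\varepsilon(x)=x^{-(1+\varepsilon)/p}\chi_{[1,\infty)}(x)$, so that $\|f_\varepsilon\|_p^p=1/\varepsilon$. For $y\ge 1$, the same substitution gives
\begin{equation*}
Hf_\varepsilon(y)=y^{-(1+\varepsilon)/p}\int_{1/y}^\infty \frac{t^{-(1+\varepsilon)/p}}{\max\{1,t\}}\,dt .
\end{equation*}
Fix a large $Y>1$. For $y\ge Y$ the inner integral is at least
\begin{equation*}
C_\varepsilon(Y):=\int_{1/Y}^\infty \frac{t^{-(1+\varepsilon)/p}}{\max\{1,t\}}\,dt .
\end{equation*}
Restricting the $L^p$ norm to $y\ge Y$ then gives
\begin{equation*}
\|Hf_\varepsilon\|_p^p\ge C_\varepsilon(Y)^p\int_Y^\infty y^{-1-\varepsilon}dy=C_\varepsilon(Y)^p\,\frac{Y^{-\varepsilon}}{\varepsilon},
\end{equation*}
and hence $\|Hf_\varepsilon\|_p/\|f_\varepsilon\|_p\ge Y^{-\varepsilon/p}C_\varepsilon(Y)$.

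Now let $\varepsilon\to0$. By monotone convergence, $C_\varepsilon(Y)\to\int_{1/Y}^\infty t^{-1/p}/\max\{1,t\}\,dt$, which is finite since $Y$ is fixed and $t^{-1-(1+\varepsilon)/p}$ is integrable at infinity. Then let $Y\to\infty$, so that this limit tends to $p+p'$. Therefore $\|H\|\ge p+p'$.

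The only delicate point is this order of limits in the lower bound: $\varepsilon\to0$ first with $Y$ fixed, then $Y\to\infty$. The truncation at $Y$ is needed because the mass of $f_\varepsilon$ escapes to infinity, and $Y$ captures exactly that regime. Everything else is routine computation.
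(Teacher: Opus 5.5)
Your proof is correct. The substitution $x=yt$ plus Minkowski's integral inequality gives exactly $\int_0^\infty t^{-1/p}\max\{1,t\}^{-1}\,dt=p'+p=pp'$. The test functions $f_\varepsilon$ with the cutoff $y\ge Y$ give the ratio bound $Y^{-\varepsilon/p}C_\varepsilon(Y)$, whose iterated limit is $p+p'$.

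The paper itself gives no proof of Theorem~\ref{fir}; it quotes it as classical from \cite{HLP}. The closest argument in the paper is the proof of Theorem~\ref{last}, the sequence-to-function analogue. With $\alpha=\beta=\varphi=\psi=0$, $\lambda=1$ it yields the same constant $p'+p$.

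\textbf{Upper bound.} In the paper this is a Schur-type argument. It applies H\"older's inequality with power weights (taking $s=p'$, $t=p$, $A=-\frac{\varphi+1}{pp'}$ in Case I of Theorem~\ref{m-th-1}), then Minkowski's inequality and Lemma~\ref{ll-4}. This is essentially your alternative with the weight $x^{-1/(pp')}$. Your dilation argument is shorter and gives the sharp constant in one line. However, it relies on the dilation invariance of Lebesgue measure in the variable of integration. In the paper's setting that variable runs over $\mathbb{N}$, so no such change of variables is available. What survives there is the weighted H\"older route, where the sum over $m$ is compared with an integral via Lemma~\ref{ll-5}; this is where hypothesis (\ref{ineq-3}) enters.

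\textbf{Lower bound.} Yours is the same idea as the paper's: power test functions on $[1,\infty)$, with the output restricted to large $y$. The paper couples the cutoff to $\varepsilon$ through $z=1/\varepsilon$ and then appeals to Fatou's lemma. You instead let $\varepsilon\to0$ with $Y$ fixed and only then $Y\to\infty$, which is slightly cleaner.

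One small imprecision: as $\varepsilon\downarrow0$, the integrand of $C_\varepsilon(Y)$ decreases on $[1/Y,1]$ but increases on $[1,\infty)$. So ``monotone convergence'' must be applied to the two pieces separately. Alternatively, compute $C_\varepsilon(Y)$ explicitly, or just use Fatou's lemma, which is all a lower bound needs.
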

There is also a discrete version of Theorem \ref{fir}.  For a sequence $a=\{a_m\}_{m=1}^{\infty}$, we define ${\mathcal{HLP}}$ operator $\mathcal{H}$ as
$${\mathcal{H}}a(n):=\sum_{m=1}^{\infty}\frac{a_m}{\max\{m,n\}}, \,n\in \mathbb{N}.$$
We use $l^p$ to denote the space of sequences of real numbers, i.e.,
\begin{equation*}l^{p}:=\{a=\{a_n\}_{n=1}^{\infty}: \|a\|_{p}= (\sum_{n=1}^{\infty} |a_{n}|^p )^{\frac{1}{p}}<\infty \}.\end{equation*}
Then we have
\begin{theorem}Let $p>1$. Let $\mathcal{H}$ be as above. Then $\mathcal{H}$ is bounded on $l^p$ and the norm $\|\mathcal{H}\|_{l^p \rightarrow l^p}$ of $\mathcal{H}$ is $pp'$. Here
\begin{equation*}
\|\mathcal{H}\|_{l^p \rightarrow l^p}=\sup_{a\in l^p}
\frac{\|\mathcal{H}a\|_{p}}{\|a\|_{p}}.
\end{equation*}
\end{theorem}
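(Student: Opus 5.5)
Write $K(m,n)=\frac{1}{\max\{m,n\}}$. The plan is to prove the upper bound $\|\mathcal{H}\|_{l^p\to l^p}\le pp'$ by Schur's test with power weights, then the lower bound by testing on truncated power sequences.

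\textbf{Upper bound.} Take weights $w_m=m^{-1/(pp')}$ and aim for the two estimates
\[
\sum_{m=1}^\infty K(m,n)\,m^{-\alpha}\le C_1 n^{-\alpha},
\qquad
\sum_{n=1}^\infty K(m,n)\,n^{-\beta}\le C_2 m^{-\beta},
\]
with $\alpha=1/p'$, $\beta=1/p$ and $C_1C_2$ arranged so that Hölder yields $\|\mathcal{H}a\|_p\le C_1^{1/p'}C_2^{1/p}\|a\|_p$. The cleanest route is the standard Schur--Hölder argument. For each $n$ write
\[
|\mathcal{H}a(n)|\le \sum_m K^{1/p'}\Big(\frac{m}{n}\Big)^{\frac{1}{pp'}}\cdot K^{1/p}\Big(\frac{n}{m}\Big)^{\frac{1}{pp'}}|a_m|,
\]
then apply Hölder in $m$, raise to the power $p$, sum over $n$, and interchange the order of summation. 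This reduces everything to bounding
\[
S_1(n)=\sum_m K(m,n)\Big(\frac{n}{m}\Big)^{1/p}
\quad\text{and}\quad
S_2(m)=\sum_n K(m,n)\Big(\frac{m}{n}\Big)^{1/p'}
\]
uniformly, the exponent choices being tuned as in the continuous case.

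\textbf{Comparison with the continuous integrals.} For $\gamma\in(0,1)$, the function $x\mapsto \frac{1}{\max\{x,n\}}(n/x)^{\gamma}$ is decreasing in $x$ on $(0,\infty)$. Hence the sum over $m\ge1$ is dominated by the integral over $(0,\infty)$:
\[
\sum_{m\ge1}\frac{1}{\max\{m,n\}}\Big(\frac{n}{m}\Big)^{\gamma}\le\int_0^\infty\frac{1}{\max\{x,n\}}\Big(\frac{n}{x}\Big)^{\gamma}dx
=\int_0^\infty\frac{t^{-\gamma}}{\max\{t,1\}}dt=\frac{1}{1-\gamma}+\frac{1}{\gamma}.
\]
With $\gamma=1/p$ this equals $p'+p=pp'$, and the same value arises for $\gamma=1/p'$. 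Both Schur constants are therefore at most $pp'$, so $\|\mathcal{H}\|\le (pp')^{1/p'}(pp')^{1/p}=pp'$. This monotonicity-based sum-to-integral comparison is the key point: it is what makes the discrete constant no larger than the continuous one.

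\textbf{Lower bound.} Test on $a_m=m^{-1/p-\varepsilon}$ with $\varepsilon\to0^+$, for which $\|a\|_p^p=\sum m^{-1-p\varepsilon}\sim \frac{1}{p\varepsilon}$. Rather than computing $\|\mathcal{H}a\|_p$ directly, use duality: take $b_n=n^{-1/p'-\varepsilon}$ and estimate $\sum_{m,n}K(m,n)a_mb_n$ from below. Substituting $m=nt$ heuristically, for each fixed $n$ the inner sum is
\[
\sum_m K(m,n)\,m^{-1/p-\varepsilon}\ \ge\ n^{-1/p-\varepsilon}\Big(\int_{1/n}^{\infty}\frac{t^{-1/p-\varepsilon}}{\max\{t,1\}}dt-O(n^{-1/p'})\Big),
\]
where the error term comes from monotone sum-versus-integral comparison and the truncation near $0$. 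Summing against $b_n$ produces the main term $(pp'+o(1))\sum_n n^{-1-\varepsilon}$. The error terms are summable uniformly in $\varepsilon$, while $\sum_n n^{-1-\varepsilon}\sim 1/\varepsilon$. Dividing by $\|a\|_p\|b\|_{p'}\sim \frac{1}{\varepsilon}(1+O(\varepsilon))$, with the factors $p^{-1/p}p'^{-1/p'}$ to be tracked carefully (or the second exponent adjusted so that both norms scale like $\varepsilon^{-1}$, e.g.\ using $\varepsilon/p$ and $\varepsilon/p'$), the ratio tends to $pp'$.

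The main obstacle is this lower-bound bookkeeping: showing that the discretization errors and the missing range $t<1/n$ contribute only $O(1)$, uniformly in $\varepsilon$, against the divergent $1/\varepsilon$ main term.
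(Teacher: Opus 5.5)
The paper does not prove this statement; it quotes it in the introduction as the classical discrete Hardy--Littlewood--P\'olya inequality. Once the slips below are fixed, your argument is correct. It is the discrete counterpart of what the paper does for Theorem~\ref{last}.

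\begin{itemize}
\item There, the upper bound is the weighted H\"older argument of Case I of Theorem~\ref{m-th-1} with $s=p'$, $t=p$, $A=-\frac{\varphi+1}{pp'}$. The sum is dominated by the integral through Lemma~\ref{ll-5}(1), whose hypothesis $\tau\le 0$ is exactly the monotonicity you exploit.
\item The paper's lower bound tests $a_m=m^{-(\varphi+1+\varepsilon)/p}$ and computes $\|\mathbf{H}_{\lambda,\alpha,\beta}a_\varepsilon\|$ directly. It then restricts to $x\ge 1/\varepsilon$ and passes to the limit with Fatou's lemma.
\end{itemize}

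Your proof differs in two ways. Here both variables are discrete, so the monotone sum--integral comparison is needed in both Schur sums; symmetry of $K$ handles this. You also obtain the lower bound by duality with a second test sequence rather than estimating $\|\mathcal{H}a_\varepsilon\|_p$ directly. Duality makes the error terms additive and trivial to control, while the direct route avoids the second sequence and its exponent bookkeeping.

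\textbf{First slip: orientation of the weights.} As displayed, H\"older makes the first factor $\sum_m K(m,n)(m/n)^{1/p}$. For $m>n$ its terms are $n^{-1/p}m^{-1/p'}$, so it diverges. The factorization must be
\[
K^{1/p'}(n/m)^{1/(pp')}\cdot K^{1/p}(m/n)^{1/(pp')}|a_m|.
\]
Then the two H\"older factors produce exactly your $S_1(n)$ and $S_2(m)$. Correspondingly, the Schur exponents in your opening paragraph should be $\alpha=1/p$, $\beta=1/p'$, not the reverse.

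\textbf{Second slip: the exponent adjustment in the lower bound is not optional.} Take $a_m=m^{-1/p-\varepsilon}$ and $b_n=n^{-1/p'-\varepsilon}$. Then $\sum_{m,n}K(m,n)a_mb_n\sim pp'/(2\varepsilon)$, while $\|a\|_p\|b\|_{p'}\sim\varepsilon^{-1}p^{-1/p}p'^{-1/p'}$. So the ratio tends to $\frac{pp'}{2}p^{1/p}p'^{1/p'}$, which by weighted AM--GM is strictly less than $pp'$ for $p\neq2$. No careful tracking of constants fixes this. You must use $a_m=m^{-(1+\varepsilon)/p}$ and $b_n=n^{-(1+\varepsilon)/p'}$, which is what your main term $\sum_n n^{-1-\varepsilon}$ already presupposes.

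\textbf{The step you call the main obstacle is short with that choice.} Put $\gamma=(1+\varepsilon)/p$ and $c_\gamma=\frac1\gamma+\frac1{1-\gamma}$. Monotonicity gives, with no discretization error,
\[
\sum_{m\ge 1}K(m,n)\,m^{-\gamma}\ \ge\ \int_1^\infty\frac{x^{-\gamma}}{\max\{x,n\}}\,dx
= n^{-\gamma}\Big(c_\gamma-\frac{n^{\gamma-1}}{1-\gamma}\Big).
\]
\begin{itemize}
\item The truncation costs $\frac{1}{1-\gamma}\sum_n n^{-1}b_n\le\frac{1}{1-\gamma}\sum_n n^{-1-1/p'}=O(1)$, uniformly in small $\varepsilon$.
\item The main term is $c_\gamma\sum_n n^{-1-\varepsilon}\ge c_\gamma/\varepsilon$.
\item The norms satisfy $\|a\|_p\|b\|_{p'}\le 1+1/\varepsilon$.
\end{itemize}
Hence the ratio is at least $(c_\gamma-O(\varepsilon))/(1+\varepsilon)$, which tends to $p+p'=pp'$.
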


The Hardy-Littlewood-P\'olya operator is related to some important topics in analysis and there have been many results about this operator and its analogues and generalizations. The classical results of this operator can be found in the well-known monograph \cite{HLP}.  In the past three decades,  the so-called Hilbert-type operators, including ${\mathcal{HLP}}$-type operators, have been extensively studied by Yang and his coauthors,  see the survey \cite{YR} and Yang's book \cite{Y3}. For more recent results see for example \cite{Jin}, \cite{WHY}, \cite{YZ}. In the work \cite{B}, Brevig established some norm estimates for certain ${\mathcal{HLP}}$-type operators in terms of Riemann zeta function.  Some further results can be found in \cite{B-1}. 

Recently, much attention has been paid to the study of the boundedness and estimate for the norm of the following operator $H_K$, which is defined as
\begin{equation*} H_{K}a(x):=\sum_{m=1}^{\infty}K(m, x)a_m ,\, \: a=\{a_m\}_{m=1}^{\infty}, x\in \mathbb{R}_{+}.\end{equation*}
where $K(y,x)$ is a non-negative measurable function on $\mathbb{R}_{+}\times \mathbb{R}_{+}$. For more detailed  introductions to  this topic, see the book \cite{YL} of Yang and Lokenath, and the references cited therein. In particular, take $K(y, x)=[\max\{y,x\}]^{-1}$, we get the following operator
\begin{equation*}{\bf H}a(x):=\sum_{m=1}^{\infty}\frac{a_m}{\max\{m,x\}},\, \: a=\{a_m\}_{m=1}^{\infty}, x\in \mathbb{R}_{+}.\end{equation*}

In this paper, we continue the study of this topic. Let $\lambda, \alpha, \beta$ be real numbers. For $a=\{a_m\}_{m=1}^{\infty}$, we define the operator ${\bf H}_{\lambda, \alpha, \beta}$ as 
\begin{equation}\label{ope}{\bf H}_{\lambda, \alpha, \beta}a(x):=\sum_{m=1}^{\infty}\frac{m^{\alpha}x^{\beta}}{[\max\{m, x\}]^{\lambda}}a_m,\,\,\,  x\in \mathbb{R}_{+}.\end{equation}
We call ${\bf H}_{\lambda, \alpha, \beta}$ {\em sequence-to-function ${\mathcal{HLP}}$-type operator}. We will investigate the conditions of the boundedness of $H_{\lambda, \alpha, \beta}$. Note that previous studies have primarily focused on the $l^{p}-L^{q}$ boundedness of sequence-to-function ${\mathcal{HLP}}$-type operators for $p=q>1$. In the present paper, we shall completely characterize the $l^{p}-L^{q}$ boundedness of these operators for all $(p, q)\in [1, \infty]\times[1, \infty]$. Our method relies on the ideas of generalized Schur’s tests, see \cite{O}, \cite{S}, \cite{Tao} and \cite{Zh}. The method used in this paper can be applied to other non-negative measurable kernel $K(y,x)$ defined on $\mathbb{R}_{+}\times \mathbb{R}_{+}$.  

To state our results, we introduce some notation. For $1\leq p<\infty, \theta\in \mathbb{R}$, we define the weighted Lebesgue space 
$L_{\theta}^p:= L_{\theta}^p(\mathbb{R}_{+})$ on $\mathbb{R}_{+}$ as 
\begin{displaymath} 
L_{\theta}^p(\mathbb{R}_{+})=\{f\in \mathcal{M}(\mathbb{R}_{+}): 
||f||_{p,\theta}=(\int_{\mathbb{R}_{+}} 
|f(x)|^p x^{\theta}dx)^{\frac{1}{p}}<\infty \}. 
\end{displaymath} 
We write $L^p$ and $\|f\|_{p}$ instead of $L_{\theta}^p$ and $\|f\|_{p,\theta}$, respectively, if $\theta=0$. 
We define the class $L^{\infty}:=L^{\infty}(\mathbb{R}_{+})$ as 
\begin{displaymath} 
L^{\infty}(\mathbb{R}_{+})=\{f\in \mathcal{M}(\mathbb{R}_{+}): \|f\|_{\infty}:=\mathop{\text {ess sup}}\limits_{x \in \mathbb{R}_{+}}|f(x)|<\infty\}.
\end{displaymath}
Meanwhile, we define the weighted space $l_{\theta}^p$ of sequences of real numbers as 
\begin{displaymath} 
l_{\theta}^p=\{a=\{a_n\}_{n=1}^{\infty}: ||a||_{p,\theta}=(\sum_{n=1}^{\infty}n^{\theta}|a_n|^p)^{\frac{1}{p}}<\infty \}. 
\end{displaymath} 
We use $l^p$ and $\|a\|_{p}$ to denote $l_{\theta}^p$ and $\|a\|_{p,\theta}$, respectively, if $\theta=0$. The class $l^{\infty}$ is defined as 
\begin{displaymath} 
l^{\infty}=\{a=\{a_n\}_{n=1}^{\infty}: \|a\|_{\infty}=\sup\limits_{n \in \mathbb{N}}|a_n|<\infty\}.
\end{displaymath} 

We will prove the following theorems for the boundedness of $\mathbf{H}_{\lambda, \alpha, \beta}$ for the cases $1\leq p\leq q \leq \infty$. 

\begin{theorem}\label{m-th-1}
Let $1\leq p\leq q<\infty$. Let $\lambda, \alpha, \beta, \varphi, \psi$ be real numbers and $\mathbf{H}_{\lambda, \alpha, \beta}$ be as in (\ref{ope}). Then $\mathbf{H}_{\lambda, \alpha, \beta}$ is bounded from
$l_{\varphi}^p$ to $L_{\psi}^q$ if and only if 
\begin{equation} 
\begin{cases}
\lambda\geq \alpha+\beta+1+\frac{\psi+1}{q}-\frac{\varphi+1}{p}, \\
-q\beta<\psi+1<q(\lambda-\beta). \nonumber 
\end{cases}
\end{equation}
\end{theorem}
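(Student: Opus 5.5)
The plan is to first normalize the weights away, then get necessity from explicit test sequences and sufficiency from a dyadic block decomposition combined with a Schur/Young-type argument for the block matrix. Writing $b_m=m^{\varphi/p}a_m$ and $g(x)=x^{\psi/q}f(x)$, boundedness of $\mathbf{H}_{\lambda,\alpha,\beta}$ from $l^p_\varphi$ to $L^q_\psi$ is equivalent to boundedness from $l^p$ to $L^q$ of the operator with kernel
\begin{equation*}
K(m,x)=\frac{m^{\alpha-\varphi/p}\,x^{\beta+\psi/q}}{[\max\{m,x\}]^{\lambda}} .
\end{equation*}
This reduced form is used throughout.

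\emph{Necessity.} First take $a=\delta_1$, the sequence with $a_1=1$ and $a_m=0$ otherwise. Then $\mathbf{H}_{\lambda,\alpha,\beta}a(x)=x^{\beta}[\max\{1,x\}]^{-\lambda}$, and its $L^q_\psi$ norm is finite exactly when two conditions hold. Near $0$ we need $\int_0^1 x^{q\beta+\psi}\,dx<\infty$, i.e.\ $\psi+1>-q\beta$. Near $\infty$ we need $\int_1^\infty x^{q(\beta-\lambda)+\psi}\,dx<\infty$, i.e.\ $\psi+1<q(\lambda-\beta)$. This gives the second line of conditions. For the first line, take $a_m=\mathbf 1_{[N,2N]}(m)$, so that $\|a\|_{p,\varphi}\asymp N^{(\varphi+1)/p}$. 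For $x\in[N,2N]$ all terms are comparable and positive, hence $\mathbf{H}_{\lambda,\alpha,\beta}a(x)\asymp N^{\alpha+\beta-\lambda+1}$. Therefore $\|\mathbf{H}_{\lambda,\alpha,\beta}a\|_{q,\psi}\gtrsim N^{\alpha+\beta-\lambda+1+(\psi+1)/q}$. Letting $N\to\infty$ in the boundedness inequality forces $\lambda\ge \alpha+\beta+1+\frac{\psi+1}{q}-\frac{\varphi+1}{p}$. This direction does not use $p\le q$.

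\emph{Sufficiency.} I would decompose dyadically, with $m\in I_j=[2^j,2^{j+1})$ for $j\ge 0$ and $x\in J_k=[2^k,2^{k+1})$ for $k\in\mathbb Z$. On $I_j\times J_k$ the kernel is comparable to a constant $2^{j(\alpha-\varphi/p)+k(\beta+\psi/q)-\lambda\max(j,k)}$. Hölder's inequality then gives a block bound
\begin{equation*}
\|\mathbf 1_{J_k}T(b\mathbf 1_{I_j})\|_{L^q}\lesssim c_{jk}\,\|b\mathbf 1_{I_j}\|_{l^p},\qquad c_{jk}=2^{j(\alpha+1-\frac{\varphi+1}{p})+k(\beta+\frac{\psi+1}{q})-\lambda\max(j,k)} .
\end{equation*}
Set $u=\beta+\frac{\psi+1}{q}$ and $v=\alpha+1-\frac{\varphi+1}{p}$. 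The hypotheses say exactly that $u>0$, that $u-\lambda<0$, and that $u+v-\lambda\le 0$; combining these also gives $v-\lambda<0$.

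The entries $c_{jk}$ are then handled by region.
\begin{itemize}
\item For $k<0$, where $x<1$, we have $c_{jk}=2^{j(v-\lambda)}2^{ku}$, which is summable geometrically in both indices.
\item For $k\ge j\ge 0$, we have $c_{jk}=2^{j(u+v-\lambda)}2^{(k-j)(u-\lambda)}$.
\item For $j>k\ge 0$, we have $c_{jk}=2^{k(u+v-\lambda)}2^{(j-k)(v-\lambda)}$.
\end{itemize}
In the last two cases the factor $2^{\min(j,k)(u+v-\lambda)}\le 1$ and the second factor decays geometrically in $|j-k|$. So $c_{jk}\lesssim 2^{-\delta|j-k|}$ for some $\delta>0$, which is a convolution kernel in $\ell^1(\mathbb Z)$.

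To finish, write $B_j=\|b\mathbf 1_{I_j}\|_{l^p}$. Then
\begin{equation*}
\|Tb\|_{L^q}=\Big(\sum_k\|\mathbf 1_{J_k}Tb\|_q^q\Big)^{1/q}\le\Big\|\Big(\sum_j c_{jk}B_j\Big)_k\Big\|_{\ell^q}\le\Big\|\Big(\sum_j c_{jk}B_j\Big)_k\Big\|_{\ell^p}\lesssim\|(B_j)\|_{\ell^p}=\|b\|_{l^p}.
\end{equation*}
Here the second inequality is $\ell^p\subset\ell^q$, which is exactly where $p\le q$ is used. The third is Young's inequality $\ell^1*\ell^p\subset\ell^p$, plus the separately summable $k<0$ part, which is bounded by $\sum_{k<0}2^{ku}\sup_j B_j$.

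I expect the main obstacle to be the endpoint equality case $\lambda=\alpha+\beta+1+\frac{\psi+1}{q}-\frac{\varphi+1}{p}$. There the diagonal blocks carry no decay, and the argument rests entirely on the off-diagonal geometric decay together with the embedding $\ell^p\subset\ell^q$. This is precisely the content of a generalized Schur test in the spirit of Okikiolu and Zhao, and the care needed is to check that the block Hölder estimate loses no more than constants. The case $p=1$, where $p'=\infty$, should be checked separately in the block estimate, but it works the same way.
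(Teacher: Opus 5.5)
Your proof is correct, but it takes a genuinely different route from the paper in both directions.

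\textbf{Necessity.} The paper also uses $a=\delta_1$ for the second line. For the first line, however, it tests on $a^{\varepsilon}_m=m^{-(\varphi+1)/p-\varepsilon/p}$. This requires first showing that $\mathbf{H}_{\lambda,\alpha,\beta}a^{\varepsilon}$ is finite (Lemma~\ref{ll-5}) and then invoking the lower bound of Remark~\ref{f-re}. Your single block $\mathbf{1}_{[N,2N]}$ is a cleaner scaling argument. It only yields the non-strict inequality, but that is all that is needed when $p\le q$. The paper's family additionally detects failure at the endpoint when $q<p$ (Lemma~\ref{ll-4-1}(2)), and it is reused for the sharp lower bound in Theorem~\ref{last}.

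\textbf{Sufficiency.} The paper uses a weighted H\"older (Schur-type) splitting $k(m,x)=[k(m,x)]^{1/s}m^{A}\cdot[k(m,x)]^{1/t}m^{-A}$ with carefully chosen $s,t,A$. It bounds the first factor by Lemma~\ref{ll-5} and applies Minkowski's inequality with exponent $q/p\ge 1$, which is where it uses $p\le q$. It then evaluates the remaining integral exactly by Lemma~\ref{ll-4}. The case $p=1$ needs a separate argument through the sup bound of Lemma~\ref{ll-l}.

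Your dyadic decomposition replaces all of this with crude block estimates, Young's inequality for the matrix $c_{jk}\lesssim 2^{-\delta|j-k|}$, and the embedding $\ell^p\subset\ell^q$. This buys several things:
\begin{itemize}
\item It handles $p=1$ and $p>1$ uniformly.
\item It needs no auxiliary parameters and none of the case analysis in Lemma~\ref{ll-5}.
\item It shows transparently why the endpoint $\lambda=u+v$ is harmless: only $\delta=\min\{\lambda-u,\lambda-v\}>0$ matters.
\item It applies verbatim to any kernel that is comparable to a constant on dyadic rectangles with the same homogeneity.
\end{itemize}
What it gives up is constants. Your bounds carry factors like $2^{|\alpha-\varphi/p|+|\beta+\psi/q|+|\lambda|}$. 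The paper's explicit constant ${\bf C}_1^{1/p'}{\bf C}_2^{1/q}$, by contrast, is exactly the operator norm when $p=q$, $s=p'$, $t=p$, $A=-\frac{\varphi+1}{pp'}$ and $\varphi+1\ge p\alpha$. This is how the upper bound in Theorem~\ref{last} is obtained.
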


\begin{theorem}\label{m-th-2}
Let $\lambda, \alpha, \beta$ be real numbers and ${\bf H}_{\lambda, \alpha, \beta}$ be as in (\ref{ope}). Then ${\bf H}_{\lambda, \alpha, \beta}$ is bounded from
$l_{\varphi}^1$ to $L^{\infty}$ if and only if 
\begin{equation}\label{th-2-1}
\begin{cases}
\lambda\geq \beta\geq 0, \\
\lambda\geq \alpha+\beta-\varphi.\nonumber
\end{cases}
\end{equation}
\end{theorem}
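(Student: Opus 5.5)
The plan rests on the standard duality $(l^1_\varphi)^*$: a positive-kernel operator maps $l^1_\varphi$ to $L^\infty$ exactly when the columns are uniformly bounded in the dual weight. Since the kernel is nonnegative, I would first show that boundedness is equivalent to
\[
M:=\sup_{m\in\mathbb{N}}\ \mathop{\text{ess sup}}_{x\in\mathbb{R}_+}\ m^{-\varphi}\,\frac{m^{\alpha}x^{\beta}}{[\max\{m,x\}]^{\lambda}}<\infty .
\]
For sufficiency, $|{\bf H}_{\lambda,\alpha,\beta}a(x)|\le \sum_m m^{\varphi}|a_m|\cdot m^{-\varphi}K(m,x)\le M\|a\|_{1,\varphi}$. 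For necessity, I would test on the unit vectors $e_m$. These have $\|e_m\|_{1,\varphi}=m^{\varphi}$, and ${\bf H}_{\lambda,\alpha,\beta}e_m(x)=m^{\alpha}x^{\beta}[\max\{m,x\}]^{-\lambda}$. Because this function is continuous in $x$, its essential supremum is its supremum, so $\|{\bf H}\|\ge M$. In fact $\|{\bf H}\|_{l^1_\varphi\to L^\infty}=M$.

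Next I would compute $M$ by splitting into the ranges $x\le m$ and $x\ge m$ for fixed $m$.
\begin{itemize}
\item On $0<x\le m$ the expression is $m^{\alpha-\varphi-\lambda}x^{\beta}$. This is bounded as $x\to0^+$ iff $\beta\ge0$. When $\beta\ge 0$ its supremum is attained at $x=m$, giving $m^{\alpha+\beta-\lambda-\varphi}$.
\item On $x\ge m$ the expression is $m^{\alpha-\varphi}x^{\beta-\lambda}$. This is bounded as $x\to\infty$ iff $\lambda\ge\beta$. When $\lambda\ge\beta$ its supremum is again at $x=m$, giving the same value $m^{\alpha+\beta-\lambda-\varphi}$.
\end{itemize}
Hence $M<\infty$ forces $\lambda\ge\beta\ge0$. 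Under that condition, $M=\sup_m m^{\alpha+\beta-\lambda-\varphi}$, which is finite iff $\alpha+\beta-\lambda-\varphi\le 0$, i.e. $\lambda\ge \alpha+\beta-\varphi$.

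The only mildly delicate point is the necessity direction: one must argue through the test vectors $e_m$ and continuity, rather than through a general duality theorem. Both are elementary, so I expect no real obstacle. As a byproduct I would record the norm, which equals $1$ under the stated conditions, since the supremum is attained at $m=1$ when $\alpha+\beta-\lambda-\varphi\le0$.
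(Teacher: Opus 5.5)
Your proof is correct. It differs from the paper's mainly in the necessity direction, and in computing the kernel supremum exactly rather than only bounding it.

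\textbf{The paper's route.} Necessity comes from Lemma \ref{ll-5-add}(1). The single unit vector $e_1$ gives $\lambda\ge\beta\ge0$. The condition $\lambda\ge\alpha+\beta-\varphi$ comes from the near-critical power sequences $a^{\varepsilon}_m=m^{-(\varphi+1)-\varepsilon}$: the paper applies Lemma \ref{ll-5} and the lower bound of Remark \ref{f-re}, then lets $\varepsilon\to0$. For sufficiency, the paper absorbs $m^{-\varphi}$ into the kernel and cites Tao's Proposition 5.4 (boundedness $l^1\to L^\infty$ holds iff the kernel is bounded). It then checks the kernel bound through Lemma \ref{ll-l}, which gives only an upper estimate of the supremum.

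\textbf{Your route.} You prove $\|\mathbf{H}_{\lambda,\alpha,\beta}\|_{l^1_\varphi\to L^\infty}=M$ directly. The columns $e_m$, together with continuity in $x$ to pass from essential supremum to supremum, give the lower bound; the trivial pointwise estimate gives the upper bound. You then evaluate $M$ exactly by splitting at $x=m$. This is shorter and fully self-contained: no external citation, no $\varepsilon$-limiting argument, no Lemma \ref{ll-l}. As a byproduct it gives the exact norm $1$ under the stated conditions, which the paper does not record.

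\textbf{What the paper's route buys.} Lemma \ref{ll-5-add} treats $p=1$ and $1<p<\infty$ together. For $p>1$, unit vectors only yield $\lambda\ge\alpha+\beta-\frac{\varphi}{p}$. This is strictly weaker than the true threshold $\lambda\ge\alpha+\beta+1-\frac{\varphi+1}{p}$, so the power-sequence test is genuinely needed for Theorem \ref{m-th-3}. At $p=1$ the dual of $l^1_\varphi$ is a weighted $l^\infty$, and your column-by-column test is the natural and sharper choice.
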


\begin{theorem}\label{m-th-3}
Let $1<p<\infty$. Let $\lambda, \alpha, \beta, \varphi$ be real numbers and ${\bf H}_{\lambda, \alpha, \beta}$ be as in (\ref{ope}). Then ${\bf H}_{\lambda, \alpha, \beta}$ is bounded from
$l_{\varphi}^p$ to $L^{\infty}$ if and only if 
%$$ \Big\{\begin{array}{cc} 
%\lefteqn{\lambda\geq \beta>0,} \\
%\lambda\geq \alpha+\beta+1-\frac{\varphi+1}{p}, 
%\end{array}\,\, or\,\, \Big\{ \begin{array}{cc} 
%\lambda\geq \beta=0, \\
%\lambda>\alpha+1-\frac{\varphi+1}{p}. \nonumber
%\end{array}$$
\begin{equation} 
\begin{cases}
\lambda>\beta\geq 0, \\
\lambda\geq \alpha+\beta+1-\frac{\varphi+1}{p}, \nonumber
\end{cases}
\end{equation}
or
\begin{equation} 
\begin{cases}
\lambda\geq \beta\geq 0, \\
\lambda>\alpha+\beta+1-\frac{\varphi+1}{p}. \nonumber
\end{cases}
\end{equation}
\end{theorem}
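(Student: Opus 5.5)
By duality, boundedness $l^p_\varphi\to L^\infty$ with $1<p<\infty$ is equivalent to
\[
\sup_{x>0}\Big(\sum_{m=1}^\infty K(m,x)^{p'}m^{-\varphi p'/p}\Big)^{1/p'}<\infty,
\qquad K(m,x)=\frac{m^{\alpha}x^{\beta}}{[\max\{m,x\}]^{\lambda}} .
\]
To see this, write $a_m=m^{-\varphi/p}b_m$ with $b\in l^p$. For fixed $x$, the map $a\mapsto {\bf H}_{\lambda,\alpha,\beta}a(x)$ is then a positive linear functional whose norm is exactly this $l^{p'}$ norm, by H\"older's inequality and its converse (take $b_m=c_m^{p'-1}$, truncated). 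Since $x\mapsto {\bf H}a(x)$ is continuous for nice $a$, the essential supremum equals the pointwise supremum. So the whole proof reduces to computing when
\[
S(x):=x^{\beta p'}\sum_{m=1}^\infty \frac{m^{\gamma}}{[\max\{m,x\}]^{\lambda p'}},\qquad \gamma:=\alpha p'-\frac{\varphi p'}{p},
\]
is bounded on $\mathbb R_+$.

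I would split $S(x)$ into the range $m\le x$ and the range $m>x$, and examine $x\to0$ and $x\to\infty$ separately.

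For $0<x<1$, every $m\ge1$ satisfies $m>x$, so $S(x)=x^{\beta p'}\sum_m m^{\gamma-\lambda p'}$. This is bounded near $0$ iff $\beta\ge0$ and the series converges, i.e. $\lambda p'-\gamma>1$.

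For $x\ge1$, write
\[
S(x)=x^{(\beta-\lambda)p'}\sum_{m\le x}m^{\gamma}+x^{\beta p'}\sum_{m>x}m^{\gamma-\lambda p'}.
\]
Assume convergence, $\lambda p'-\gamma>1$. The tail sum is then comparable to $x^{\gamma-\lambda p'+1}$, so the second term is $\asymp x^{(\beta-\lambda)p'+\gamma+1}$. For the first term:
\begin{itemize}
\item if $\gamma>-1$, it is $\asymp x^{(\beta-\lambda)p'+\gamma+1}$;
\item if $\gamma=-1$, it is $\asymp x^{(\beta-\lambda)p'}\log x$;
\item if $\gamma<-1$, it is $\asymp x^{(\beta-\lambda)p'}$.
\end{itemize}
Boundedness therefore requires $(\beta-\lambda)p'+\gamma+1\le0$. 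In the cases $\gamma\ge -1$ it also requires $\lambda>\beta$ (from the log or the power $\gamma+1\ge0$), and in the case $\gamma<-1$ it requires $\lambda\ge\beta$.

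It remains to translate these conditions. Dividing by $p'$ and using $1/p'=1-1/p$:
\begin{itemize}
\item $\lambda p'-\gamma>1$ becomes $\lambda>\alpha+1-\frac{\varphi+1}{p}$;
\item $(\beta-\lambda)p'+\gamma+1\le 0$ becomes $\lambda\ge\alpha+\beta+1-\frac{\varphi+1}{p}$;
\item $\gamma<-1$ becomes $\alpha+1-\frac{\varphi+1}{p}<0$.
\end{itemize}
Then I check the case split.
\begin{itemize}
\item If $\lambda>\beta\ge0$ and the second inequality holds, then $\lambda>\alpha+1-\frac{\varphi+1}{p}$ follows automatically when $\beta\ge0$: either $\beta>0$, or $\beta=0$, in which case $\lambda\ge\alpha+1-\frac{\varphi+1}{p}$ together with $\lambda>0$ still needs checking. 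This $\beta=0$ subcase is the delicate point, where strictness of convergence must be verified against the stated conditions.
\item If $\lambda=\beta$, we need $\gamma<-1$, which is exactly $\lambda>\alpha+\beta+1-\frac{\varphi+1}{p}$. Then $\lambda\ge\beta\ge0$ gives convergence.
\end{itemize}

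Necessity follows from the same estimates, now used as lower bounds. Test with the extremal $b_m=c_m^{p'-1}$ truncated to $m\le N$, let $N\to\infty$, and send $x\to0$ or $x\to\infty$.

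The main obstacle I expect is the careful bookkeeping of the boundary cases, namely $\beta=0$ and $\lambda=\beta$ with $\gamma=-1$, which produce logarithms. The task is to show they match exactly the union of the two systems in Theorem~\ref{m-th-3}. The remaining steps are routine comparisons of sums with integrals.
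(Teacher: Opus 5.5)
\paragraph{The gap: the $\beta=0$ subcase cannot be closed.}
Your duality reduction is correct. So is the two-sided asymptotic analysis of $S(x)$ for $x\le 1$ and $x\to\infty$, and it is sharper than the paper's route. The paper's sufficiency argument is the same H\"older bound, split at $x=1$ and fed into Lemmas \ref{ll-5} and \ref{ll-5-1}. Its necessity argument uses the test sequences $e_1$ and $a_\varepsilon$ rather than the extremal sequence.

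The gap is the step you labelled ``delicate'' and left open. It cannot be closed, because the statement is false there. Your claim that $\lambda>\alpha+1-\frac{\varphi+1}{p}$ follows from the first system whenever $\beta\ge 0$ holds only for $\beta>0$. Suppose $\beta=0$ and $\lambda=\alpha+1-\frac{\varphi+1}{p}>0$. The first system holds ($\lambda>0=\beta$, and $\lambda\ge\alpha+\beta+1-\frac{\varphi+1}{p}$ with equality). Yet $\gamma-\lambda p'=-1$, so your own formula gives $S(x)=\sum_m m^{-1}=\infty$ for every $x\le 1$, and the operator is unbounded.

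Concretely, take $p=2$, $\alpha=\beta=\varphi=0$, $\lambda=\frac12$. Then $\mathbf{H}_{\lambda,\alpha,\beta}a(x)=\sum_m m^{-1/2}a_m$ for all $x\le 1$. This is infinite for $a_m=m^{-1/2}/\log(m+1)$, which lies in $l^2$. So no bookkeeping will make your conditions coincide with the union of the two systems: the ``if'' direction of the statement fails on this boundary.

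\paragraph{The same slip in the paper, and the corrected statement.}
The paper's proof makes the identical slip. In the sufficiency part it asserts that the first system implies $\lambda>\alpha+1-\frac{\varphi+1}{p}$. Its necessity lemma tests only with $a_\varepsilon$ and so extracts only the non-strict $\lambda\ge\alpha+1-\frac{\varphi+1}{p}$, which is why the defect goes unnoticed. Your evaluation at $x\le 1$ is what exposes it.

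What your argument actually proves, with necessity and sufficiency together, is the corrected characterization. The operator $\mathbf{H}_{\lambda,\alpha,\beta}$ is bounded from $l^p_\varphi$ to $L^\infty$ if and only if $\lambda>\alpha+1-\frac{\varphi+1}{p}$ and one of the two stated systems holds. The extra strict inequality is automatic in the second system, and in the first system when $\beta>0$.

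Hence the ``only if'' direction of the stated theorem is true; your necessary conditions are stronger than those claimed. The ``if'' direction fails exactly on the set $\{\beta=0,\ \lambda=\alpha+1-\frac{\varphi+1}{p}>0\}$. To finish, state the corrected theorem and give this counterexample; the rest of your case analysis then goes through as written.
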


\begin{theorem}\label{m-th-3-1}
Let $\lambda, \alpha, \beta$ be real numbers and ${\bf H}_{\lambda, \alpha, \beta}$ be as in (\ref{ope}). Then ${\bf H}_{\lambda, \alpha, \beta}$ is bounded from
$l^\infty$ to $L^{\infty}$ if and only if 
\begin{equation}
\begin{cases}
\lambda\geq \beta\geq 0, \\
\lambda>\alpha+\beta+1. \nonumber
\end{cases}
\end{equation}
\end{theorem}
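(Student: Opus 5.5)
For Theorem \ref{m-th-3-1} the operator norm from $l^\infty$ to $L^\infty$ is explicit. Since the kernel is nonnegative,
\begin{equation*}
\|{\bf H}_{\lambda,\alpha,\beta}\|_{l^\infty\to L^\infty}=\sup_{x>0} S(x),\qquad S(x):=\sum_{m=1}^{\infty}\frac{m^{\alpha}x^{\beta}}{[\max\{m,x\}]^{\lambda}} .
\end{equation*}
Indeed, $|{\bf H}_{\lambda,\alpha,\beta}a(x)|\le \|a\|_\infty S(x)$ for every $x$. Conversely, testing on $a\equiv 1$ gives ${\bf H}_{\lambda,\alpha,\beta}a=S$, and $S$ is continuous in $x$ whenever it is finite. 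So the plan is to show that $\sup_{x>0}S(x)<\infty$ exactly under the stated conditions. I split the analysis into $x\in(0,1]$ and $x\ge 1$.

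\emph{Necessity.} For $x\ge1$ the tail $m> x$ contributes $x^{\beta}\sum_{m>x}m^{\alpha-\lambda}$. This must be finite, so $\lambda-\alpha>1$. In that case the tail is comparable to $x^{\beta}\cdot x^{\alpha-\lambda+1}$, and boundedness as $x\to\infty$ forces $\lambda\ge\alpha+\beta+1$. For $x\in(0,1)$ every $m\ge 1\ge x$, so $S(x)=x^{\beta}\sum_m m^{\alpha-\lambda}$; boundedness as $x\to0^+$ forces $\beta\ge0$. For large $x$ the head $m\le x$ contributes $x^{\beta-\lambda}\sum_{m\le x}m^{\alpha}$, which is at least $x^{\beta-\lambda}$ (the term $m=1$), and this gives $\lambda\ge\beta$. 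Combining $\lambda>\alpha+1$ with $\lambda\ge\alpha+\beta+1$ and $\beta\ge0$: if $\beta>0$ then $\lambda\ge\alpha+\beta+1>\alpha+1$ already holds. I need to check whether the stated strict inequality $\lambda>\alpha+\beta+1$ is really forced.

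\emph{Head term and the borderline case.} This is the main obstacle: estimating the head $x^{\beta-\lambda}\sum_{m\le x}m^{\alpha}$ in its three regimes.
\begin{itemize}
\item If $\alpha>-1$, the head is $\asymp x^{\alpha+\beta+1-\lambda}$.
\item If $\alpha=-1$, it is $\asymp x^{\beta-\lambda}\log x$.
\item If $\alpha<-1$, it is $\asymp x^{\beta-\lambda}$.
\end{itemize}
Now take the borderline $\lambda=\alpha+\beta+1$ with $\beta>0$, say $\alpha=-1$ and $\lambda=\beta$. Then the head is $\log x$, which is unbounded. With $\alpha>-1$ the head is $\asymp 1$, and the tail is $x^{\beta}\sum_{m>x}m^{\alpha-\lambda}\asymp x^{\beta+\alpha-\lambda+1}=1$, so $S$ stays bounded. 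So equality can apparently occur. I will recheck this against the statement; it may be that the paper's condition needs $\lambda\ge\alpha+\beta+1$ together with $\lambda>\alpha+1$. I will test the example $\beta=1$, $\alpha=0$, $\lambda=2$ carefully, and will present the equivalence in whatever form the computation yields.

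\emph{Sufficiency.} Assume $\lambda\ge\beta\ge0$ and $\lambda>\alpha+\beta+1$; this gives $\lambda-\alpha>1+\beta\ge1$. For $x\le1$, $S(x)\le\sum_m m^{\alpha-\lambda}<\infty$, using $x^\beta\le1$. For $x\ge1$:
\begin{itemize}
\item The tail is at most $Cx^{\beta}\max\{x,1\}^{\alpha-\lambda+1}\le C$.
\item The head, in the three cases above, is bounded by $x^{\alpha+\beta+1-\lambda}$, by $x^{\beta-\lambda}\log x$, or by $x^{\beta-\lambda}$.
\end{itemize}
The first of these is bounded since $\lambda>\alpha+\beta+1$. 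The second has $\alpha=-1$, so $\lambda>\beta$ and $x^{\beta-\lambda}\log x$ is bounded. The third is bounded since $\lambda\ge\beta$. These are routine integral-comparison bounds for monotone power sums.
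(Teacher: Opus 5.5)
Your hesitation at the borderline is justified, and you should commit to it rather than hedge: the ``only if'' direction of the statement is false, so the necessity half of your plan cannot be completed. Your example $\alpha=0$, $\beta=1$, $\lambda=2$ is a genuine counterexample. For $0<x\le 1$ one has $S(x)=x\zeta(2)\le\zeta(2)$, while for $x>1$
\[
S(x)=\frac{\lfloor x\rfloor}{x}+x\sum_{m>x}\frac{1}{m^{2}}\le 1+\frac{x}{\lfloor x\rfloor}<3 .
\]
Hence $\mathbf{H}_{2,0,1}$ is bounded from $l^\infty$ to $L^\infty$ even though $\lambda=\alpha+\beta+1$.

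The paper's necessity argument is the same head/tail argument as yours. It derives exactly your list $\lambda\ge\beta\ge 0$, $\lambda>\alpha+1$, $\lambda\ge\alpha+\beta+1$, and then asserts that this list is equivalent to $\lambda\ge\beta\ge0$, $\lambda>\alpha+\beta+1$. That claimed equivalence is the erroneous step, and it fails precisely at your example. Your sufficiency argument is correct and matches the paper's: split at $x=1$, treat $\alpha>-1$ and $\alpha\le -1$ separately, and use power-sum comparisons. So the stated condition is sufficient but not necessary.

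The repair you float is not right either: $\lambda\ge\alpha+\beta+1$ together with $\lambda>\alpha+1$ and $\lambda\ge\beta\ge 0$ (which is the paper's intermediate list). Your own head estimate shows why. For $\alpha=-1$ and $\lambda=\beta>0$ all of these conditions hold, yet the head is $x^{\beta-\lambda}\sum_{m\le x}m^{-1}\asymp\log x$, which is unbounded. So that list is not sufficient.

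Carrying your three regimes through to the end gives the correct characterization. $\mathbf{H}_{\lambda,\alpha,\beta}$ is bounded from $l^\infty$ to $L^\infty$ if and only if $\lambda\ge\beta\ge 0$ and $\lambda\ge\alpha+\beta+1$, with equality in the last inequality permitted exactly when $\alpha>-1$ and $\beta>0$. By regime this reads:
\begin{itemize}
\item for $\alpha<-1$: $\lambda\ge\beta\ge0$;
\item for $\alpha=-1$: $\lambda>\beta\ge 0$;
\item for $\alpha>-1$: $\beta\ge 0$ and $\lambda\ge\alpha+\beta+1$, with strict inequality when $\beta=0$ so that the series converges.
\end{itemize}
This is what your computation actually proves, and you should present it in place of the theorem as written.
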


For the boundedness of $\mathbf{H}_{\lambda, \alpha, \beta}$ for the cases $1\leq q<p\leq \infty$, we shall prove that  

\begin{theorem}\label{th-r-10}
Let $1<p<\infty$. Let $\lambda, \alpha, \beta, \varphi, \psi$ be real numbers and $\mathbf{H}_{\lambda, \alpha, \beta}$ be as in (\ref{ope}).  Then  $\mathbf{H}_{\lambda, \alpha, \beta}$ is bounded from
$l_{\varphi}^p$ to $L_{\psi}^1$ if and only if 
\begin{equation} 
\begin{cases}
\lambda>\alpha+\beta+2+\psi-\frac{\varphi+1}{p}, \\
-\beta<\psi+1<\lambda-\beta. \nonumber 
\end{cases}
\end{equation}
\end{theorem}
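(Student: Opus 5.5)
The plan is to reduce the $l_{\varphi}^p\to L_{\psi}^1$ problem to a one-dimensional duality computation. This works because the target space is $L^1$ and the kernel is non-negative. No Schur-type test should be needed here: for $q=1$ the operator norm can be computed exactly via Tonelli's theorem and $l^p$–$l^{p'}$ duality.

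\emph{Step 1 (reduction).} Since $|{\bf H}_{\lambda,\alpha,\beta}a(x)|\le {\bf H}_{\lambda,\alpha,\beta}|a|(x)$, with equality when $a\ge 0$, and since $\|a\|_{p,\varphi}=\||a|\|_{p,\varphi}$, it suffices to treat $a\ge 0$. For such $a$, Tonelli gives
\begin{equation*}
\|{\bf H}_{\lambda,\alpha,\beta}a\|_{1,\psi}=\sum_{m=1}^{\infty}c_m a_m,\qquad c_m:=m^{\alpha}\int_{\mathbb{R}_+}\frac{x^{\beta+\psi}}{[\max\{m,x\}]^{\lambda}}\,dx\in[0,\infty].
\end{equation*}
Hence ${\bf H}_{\lambda,\alpha,\beta}$ is bounded from $l_{\varphi}^p$ to $L_{\psi}^1$ if and only if the positive linear functional $a\mapsto\sum_m c_m a_m$ is bounded on $l^p_{\varphi}$. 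Substituting $b_m=m^{\varphi/p}a_m$, this is the same as boundedness on $l^p$ of $b\mapsto \sum_m c_m m^{-\varphi/p}b_m$. By $l^p$–$l^{p'}$ duality (with $1<p<\infty$), that holds if and only if all $c_m<\infty$ and $\{c_m m^{-\varphi/p}\}\in l^{p'}$. In that case the norm equals $\|\{c_m m^{-\varphi/p}\}\|_{p'}$.

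\emph{Step 2 (computing $c_m$).} Split the integral at $x=m$:
\begin{equation*}
\int_0^m x^{\beta+\psi}m^{-\lambda}\,dx+\int_m^\infty x^{\beta+\psi-\lambda}\,dx .
\end{equation*}
The first piece is finite if and only if $\beta+\psi>-1$. The second is finite if and only if $\beta+\psi-\lambda<-1$. Together these are exactly $-\beta<\psi+1<\lambda-\beta$. If either fails, then $c_1=\infty$, and the test sequence $a=e_1=(1,0,0,\dots)\in l^p_\varphi$ already has $\|{\bf H}_{\lambda,\alpha,\beta}e_1\|_{1,\psi}=\infty$; this gives necessity of the second condition. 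When both hold,
\begin{equation*}
c_m=C\,m^{\alpha+\beta+\psi+1-\lambda},\qquad C=\frac{1}{\beta+\psi+1}+\frac{1}{\lambda-\beta-\psi-1}\in(0,\infty).
\end{equation*}

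\emph{Step 3 (summability).} Now $\{c_m m^{-\varphi/p}\}\in l^{p'}$ if and only if
\begin{equation*}
\sum_{m=1}^{\infty} m^{p'(\alpha+\beta+\psi+1-\lambda-\varphi/p)}<\infty .
\end{equation*}
This holds exactly when $p'(\alpha+\beta+\psi+1-\lambda-\varphi/p)<-1$. Dividing by $p'$ and using $1/p'=1-1/p$, the condition is $\lambda>\alpha+\beta+2+\psi-\frac{\varphi+1}{p}$. This gives both sufficiency and necessity of the first condition. As a by-product,
\begin{equation*}
\|{\bf H}_{\lambda,\alpha,\beta}\|_{l^p_\varphi\to L^1_\psi}=C\,\Big(\sum_{m=1}^{\infty} m^{p'(\alpha+\beta+\psi+1-\lambda-\varphi/p)}\Big)^{1/p'} .
\end{equation*}

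The only point needing care is the necessity direction in Step 1. It uses the fact that the dual norm of a positive functional is attained, or approached, by non-negative test sequences. Concretely, take truncations $a_m=c_m^{p'-1}m^{-\varphi p'/p}$ for $m\le N$ and let $N\to\infty$. This is standard, so I expect no real obstacle. The strict inequality in the first condition, in contrast to the non-strict one in Theorem \ref{m-th-1}, comes from the $l^{p'}$-summability of a pure power, which never holds at the borderline exponent.
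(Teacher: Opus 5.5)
Your proof is correct, and it differs from the paper's mainly in how necessity is obtained. The paper takes the ``only if'' part from Lemma~\ref{ll-4-1}(2) with $q=1$. There, the test vector $e_1$ gives $-\beta<\psi+1<\lambda-\beta$. The power sequences $a_\varepsilon=\{m^{-\frac{\varphi+1}{p}-\frac{\varepsilon}{p}}\}$, combined with the lower bound of Remark~\ref{f-re}, first give $\lambda\ge\alpha+\beta+2+\psi-\frac{\varphi+1}{p}$; the blow-up $\varepsilon^{\frac{1}{p}-1}\to\infty$ then excludes equality.

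For sufficiency, the paper pairs $x^{\psi}\mathbf{H}_{\lambda,\alpha,\beta}a$ with $g\in L^\infty$, uses Tonelli and Lemma~\ref{ll-4} to reach $\sum_m c_m|a_m|$, and applies H\"older. This is exactly the sufficiency half of your Steps 2--3. The pairing with $g$ plays no real role, since by positivity one may take $g\equiv 1$, as you effectively do.

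Your exact identity $\|\mathbf{H}_{\lambda,\alpha,\beta}a\|_{1,\psi}=\sum_m c_m a_m$ for $a\ge 0$, combined with the converse of H\"older via the truncated extremals $a_m=c_m^{p'-1}m^{-\varphi p'/p}$, is a self-contained weighted version of Lemma~\ref{ll-1}. The paper states that lemma but does not invoke it here. Your route buys three things:
\begin{itemize}
\item it proves both directions in one stroke;
\item it makes the strictness of the first inequality transparent, since a pure power is never in $l^{p'}$ at the critical exponent;
\item it yields the exact norm $C\big(\sum_m m^{p'(\alpha+\beta+\psi+1-\lambda-\varphi/p)}\big)^{1/p'}$, which the paper does not record for this case.
\end{itemize}
The paper's route buys uniformity instead: the same test-sequence necessity lemma serves every pair $1\le p,q<\infty$. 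Your Tonelli identity, by contrast, is special to the target space $L^1_\psi$.
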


\begin{theorem}\label{th-r-1}
Let $1<q<p<\infty$. Let $\lambda, \alpha, \beta, \varphi, \psi$ be real numbers and $\mathbf{H}_{\lambda, \alpha, \beta}$ be as in (\ref{ope}). If $\varphi+1<p(\alpha+1)$, then $\mathbf{H}_{\lambda, \alpha, \beta}$ is bounded from
$l_{\varphi}^p$ to $L_{\psi}^q$ if and only if 
\begin{equation} 
\begin{cases}
\lambda>\alpha+\beta+1+\frac{\psi+1}{q}-\frac{\varphi+1}{p}, \\
-q\beta<\psi+1. \nonumber 
\end{cases}
\end{equation}
\end{theorem}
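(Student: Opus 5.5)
We plan to split the kernel along the diagonal and reduce the problem to two weighted Hardy-type operators, one of the form ``sum over $m<x$'' and one of the form ``sum over $m\geq x$''. For $q<p$ these are governed by integral (Maz'ya--Sinnamon type) conditions rather than supremum conditions. Write $a_m=m^{-\varphi/p}b_m$ with $b\in l^p$, and decompose
\begin{equation*}
\mathbf{H}_{\lambda,\alpha,\beta}a(x)=x^{\beta-\lambda}\sum_{m<x}m^{\alpha}a_m+x^{\beta}\sum_{m\geq x}m^{\alpha-\lambda}a_m=:T_1a(x)+T_2a(x).
\end{equation*}
Since all kernels are non-negative, it suffices to take $a\geq 0$. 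Then $0\leq T_ia\leq \mathbf{H}_{\lambda,\alpha,\beta}a$, so boundedness of $\mathbf{H}_{\lambda,\alpha,\beta}$ is equivalent to boundedness of both $T_1$ and $T_2$ from $l^p_\varphi$ to $L^q_\psi$. Set $r=\frac{pq}{p-q}$.

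\textbf{The operator $T_1$.} For $x<1$ the sum is empty, so only $x\geq 1$ matters. I would use the generalized Schur test / Sinnamon's criterion for Hardy operators with $q<p$. Boundedness holds if and only if
\begin{equation*}
\int_1^\infty W(t)^{r/q}\,V(t)^{r/q'}\,dV(t)<\infty ,
\end{equation*}
with the analogous discrete form in the jump points. Here $W(t)=\int_t^\infty x^{(\beta-\lambda)q+\psi}dx$ and $V(t)=\sum_{m<t}m^{(\alpha-\varphi/p)p'}$. The hypothesis $\varphi+1<p(\alpha+1)$ is exactly what makes the exponent $(\alpha-\varphi/p)p'$ exceed $-1$, so that $V(t)\asymp t^{p'(\alpha+1-\frac{\varphi+1}{p})}$ grows as a power.

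$W$ is finite if and only if $\psi+1<q(\lambda-\beta)$. Note that this follows from the first inequality of the theorem together with $\alpha+1-\frac{\varphi+1}{p}>0$, which explains why it is absent from the statement. Plugging in the powers, the integral becomes $\int_1^\infty t^{\gamma-1}dt$. A direct bookkeeping computation should show that $\gamma<0$ is equivalent to $\lambda>\alpha+\beta+1+\frac{\psi+1}{q}-\frac{\varphi+1}{p}$. Necessity for $T_1$ comes from the same criterion, since it is two-sided. Alternatively, test on sequences of the type $b_m=m^{-1/p}(\log m)^{-s}$, or on truncated extremals $a_m=m^{(\alpha-\varphi)(p'-1)}\chi_{m<N}\,W(m)^{\cdots}$ as in Sinnamon's proof.

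\textbf{The operator $T_2$.} On $(0,1)$ we have $T_2a(x)=x^\beta\sum_{m\geq1}m^{\alpha-\lambda}a_m$. Choosing $a$ with this sum positive, finiteness of the $L^q_\psi$ norm forces $\int_0^1x^{\beta q+\psi}dx<\infty$, i.e.\ $-q\beta<\psi+1$. Conversely, by H\"older, the sum is bounded by $\|a\|_{p,\varphi}$ provided $\sum m^{(\alpha-\lambda-\varphi/p)p'}<\infty$, i.e.\ $\lambda>\alpha+1-\frac{\varphi+1}{p}$. This holds under the main inequality together with $-q\beta<\psi+1$. For $x\geq1$, $T_2$ is a dual (tail) Hardy operator. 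I would apply the dual version of the $q<p$ criterion, with $\tilde W(t)=\int_1^t x^{\beta q+\psi}dx$ and $\tilde V(t)=\sum_{m\geq t}m^{(\alpha-\lambda-\varphi/p)p'}$. Both are power-like under the conditions above, and the resulting integral $\int_1^\infty t^{\gamma'-1}dt$ converges under the same inequality on $\lambda$. Necessity of the main inequality can thus be obtained from either part. Necessity of $-q\beta<\psi+1$ comes from the $x<1$ piece of $T_2$.

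\textbf{Main obstacle.} The hardest step is rigorously justifying the $q<p$ Hardy criterion in this mixed sequence-to-function setting, and especially its necessity direction. I would first try to prove sufficiency by a generalized Schur test in the spirit of Okikiolu, Sinnamon, Tao and Zhao, using power-type test functions $h(m)=m^{s}$ and $g(x)=x^{t}$ together with H\"older with exponent $r$. For necessity, I would construct explicit near-extremal sequences of power type with a cutoff at $N$, and show that the ratio of norms blows up as $N\to\infty$ whenever the integral condition fails, including the borderline case of equality.
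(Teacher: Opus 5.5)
Your proposal is correct, modulo citing the $q<p$ Hardy criterion for general measures, but it takes a different route from the paper.

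\textbf{How the paper argues.} The paper never splits the kernel. It passes to the normalized operator $\widehat{\mathbf{H}}_{\lambda,\alpha,\beta}:l^p\to L^q$, with kernel $\widehat K$, and applies Sinnamon's generalized Schur test (Lemma~\ref{ll-3}) to the whole kernel. It uses the single test sequence $u_m=m^{-(1+\varepsilon)/p}$. It bounds $\sum_m\widehat K(m,x)u_m$ by Lemma~\ref{ll-5}, which is the only place $\varphi+1<p(\alpha+1)$ enters, and then bounds the resulting $x$-integral by Lemma~\ref{ll-4}, obtaining $Iu(n)\le Cu_n$ for small $\varepsilon$. Necessity is not read off from a two-sided criterion but from explicit test sequences in Lemma~\ref{ll-4-1}(2). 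The vector $e_1$ gives $-q\beta<\psi+1$. The family $a^{\varepsilon}_m=m^{-(\varphi+1+\varepsilon)/p}$ gives the main inequality. Strictness comes from $\|\mathbf{H}_{\lambda,\alpha,\beta}a_\varepsilon\|_{q,\psi}/\|a_\varepsilon\|_{p,\varphi}\gtrsim\varepsilon^{1/p-1/q}\to\infty$ in the equality case. Your logarithmic sequences $b_m=m^{-1/p}(\log m)^{-s}$ with $1/p<s\le 1/q$ would do the same job.

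\textbf{What each route buys.} Your Hardy/dual-Hardy decomposition with the Maz'ya--Rozin/Sinnamon integral condition gives both directions from one computation. The exponent is $\gamma=r(A-B)$ with $A=\alpha+1-\frac{\varphi+1}{p}$ and $B=\lambda-\beta-\frac{\psi+1}{q}$. It also shows why $\psi+1<q(\lambda-\beta)$ is redundant and exactly where $A>0$ is needed. The cost is that you must import the $q<p$ Hardy criterion for measures with atoms, taking some care with endpoint conventions at the jumps. The paper needs only Lemma~\ref{ll-3} and elementary power integrals.

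\textbf{A point in your favour.} Treating $x<1$ separately sidesteps a weak spot in the paper's one-shot computation. The paper bounds $\sum_m\widehat K(m,x)u_m$ by $Cx^{\beta+\frac{\psi}{q}+A-\frac{\varepsilon}{p}-\lambda}$ on all of $\mathbb{R}_+$. For $x<1$, however, this sum is just a constant times $x^{\beta+\psi/q}$. In the next integral, the exponent of $x$ is $\beta+\frac{\psi}{q}+(q-1)\bigl(\beta+\frac{\psi}{q}+A-\frac{\varepsilon}{p}-\lambda\bigr)$, which drops below $-1$ once $\lambda$ is large. Lemma~\ref{ll-4} then does not apply as stated. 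The fix is to use the bound $Cx^{\beta+\psi/q}\max\{1,x\}^{A-\frac{\varepsilon}{p}-\lambda}$ and handle $(0,1)$ separately, which is exactly the split your argument already makes.
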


\begin{theorem}\label{th-r-2}
Let $1\leq q<\infty$. Let $\lambda, \alpha, \beta, \psi$ be real numbers with $\alpha+1\geq0$, and ${\bf H}_{\lambda, \alpha, \beta}$ be as in (\ref{ope}).
Then ${\bf H}_{\lambda, \alpha, \beta}$ is bounded from $l^{\infty}$ to $L_{\psi}^{q}$ if and only if
\begin{equation} 
\begin{cases}
\lambda>\alpha+\beta+1+\frac{\psi+1}{q}, \\
-q\beta<\psi+1. \nonumber 
\end{cases}
\end{equation}
\end{theorem}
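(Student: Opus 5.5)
Since the kernel $m^{\alpha}x^{\beta}[\max\{m,x\}]^{-\lambda}$ is non-negative, I would reduce everything to a single test sequence. For $a\in l^{\infty}$ we have $|{\bf H}_{\lambda,\alpha,\beta}a(x)|\le \|a\|_{\infty}F(x)$, where
$$F(x):={\bf H}_{\lambda,\alpha,\beta}\mathbf{1}(x)=\sum_{m=1}^{\infty}\frac{m^{\alpha}x^{\beta}}{[\max\{m,x\}]^{\lambda}},$$
and the constant sequence $\mathbf{1}$ has norm $1$ in $l^{\infty}$. Hence ${\bf H}_{\lambda,\alpha,\beta}$ is bounded from $l^{\infty}$ to $L^q_{\psi}$ if and only if $F$ is finite a.e. and $F\in L^q_{\psi}$. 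In that case $\|{\bf H}_{\lambda,\alpha,\beta}\|=\|F\|_{q,\psi}$. No Schur-type test is needed here. The whole proof becomes a two-sided asymptotic estimate of $F$ near $0$ and near $\infty$.

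\emph{Finiteness.} For every $x>0$ the tail of the series is $x^{\beta}\sum_{m>x}m^{\alpha-\lambda}$. This is finite if and only if $\lambda>\alpha+1$. If $\lambda\le\alpha+1$, then $F\equiv\infty$ and ${\bf H}_{\lambda,\alpha,\beta}\mathbf{1}\notin L^q_{\psi}$, so the operator is unbounded. Note that $\lambda>\alpha+1$ is implied by the first condition of the theorem once $-q\beta<\psi+1$ holds, since then $\beta+\frac{\psi+1}{q}>0$. So from now on I assume $\lambda>\alpha+1$.

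\emph{Near $0$.} For $0<x<1$ every $m\ge1>x$, so $F(x)=x^{\beta}\zeta(\lambda-\alpha)$ exactly. Therefore $\int_0^1F^qx^{\psi}\,dx<\infty$ if and only if $q\beta+\psi>-1$, which is the second condition.

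\emph{Near $\infty$.} For $x\ge1$, split
$$F(x)=x^{\beta-\lambda}\sum_{m\le x}m^{\alpha}+x^{\beta}\sum_{m>x}m^{\alpha-\lambda}.$$
Comparison with integrals gives the following bounds.
\begin{itemize}
\item The second term is comparable to $x^{\beta+\alpha+1-\lambda}$.
\item The first term is comparable to $x^{\beta+\alpha+1-\lambda}$ when $\alpha>-1$.
\item The first term is comparable to $x^{\beta-\lambda}\log(1+x)$ when $\alpha=-1$.
\end{itemize}
This is where the hypothesis $\alpha+1\ge0$ is used: it gives the \emph{lower} bound $\sum_{m\le x}m^{\alpha}\gtrsim x^{\alpha+1}$, with $\gtrsim 1$ when $\alpha=-1$. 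Without it the partial sums stay bounded and the asymptotics change.

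Hence $x^{\beta+\alpha+1-\lambda}\lesssim F(x)\lesssim x^{\beta+\alpha+1-\lambda}(1+\log x)$ on $[1,\infty)$. The integral $\int_1^{\infty}F^qx^{\psi}\,dx$ is finite if and only if $q(\beta+\alpha+1-\lambda)+\psi<-1$, i.e. $\lambda>\alpha+\beta+1+\frac{\psi+1}{q}$. The logarithm is harmless because this inequality is strict.

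Combining the three pieces gives both directions of the theorem. The step needing the most care is the two-sided estimate of $\sum_{m\le x}m^{\alpha}$ for non-integer $x$ and for $\alpha=-1$. Its lower bound drives necessity, and its upper bound, including the logarithmic factor, drives sufficiency.
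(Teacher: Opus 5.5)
Your proposal is correct and follows essentially the same route as the paper. The paper also reduces boundedness to integrability of $\sum_m K(m,x)$ (its Lemma~\ref{ll-2}, which your $a=\mathbf{1}$ argument reproves). It then uses the exact formula on $(0,1)$, the tail lower bound of Remark~\ref{f-re} for necessity, and Lemma~\ref{ll-5} together with the $x^{-\lambda+\epsilon}$ bound from the proof of Lemma~\ref{ll-5-1} for sufficiency; your logarithm plays the role of that $x^{\epsilon}$.

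One correction to your commentary. The hypothesis $\alpha+1\ge 0$ is not what gives the lower bound, because for $\alpha<-1$ and $x\ge 1$ one still has $\sum_{m\le x}m^{\alpha}\ge 1\ge x^{\alpha+1}$, and in any case your tail term alone gives $F(x)\gtrsim x^{\alpha+\beta+1-\lambda}$ for every $\alpha$. The hypothesis is needed only for the upper bound $\sum_{m\le x}m^{\alpha}\lesssim x^{\alpha+1}(1+\log x)$, that is, for sufficiency.
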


The rest of this paper is organized as follows.  We will establish some lemmas in the next section. We shall give the proof of Theorem \ref{m-th-1} in Section 3. We prove Theorem \ref{m-th-2}, \ref{m-th-3} in Section 4.  In Section 5, we prove Theorem \ref{m-th-3-1}.  We prove Theorem \ref{th-r-10}, \ref{th-r-1} and \ref{th-r-2} in Section 6. In Section 7, we shall give sharp norm estimates for $\mathbf{H}_{\lambda, \alpha, \beta}$ for some special cases.
We present two final remarks in Section 8.

\section{{\bf Lemmas}}
In this section, we list some known lemmas and establish some new ones.  We first recall the following two lemmas, see \cite[Problem 5.5]{Tao} for more general versions. 
\begin{lemma}\label{ll-1}
Let $1\leq p <\infty$. Let $K(y,x)$ be a measurable function on $\mathbb{R}_{+}\times \mathbb{R}_{+}$ with $0 \leq K(y,x)<\infty$ for all $(y,x)\in \mathbb{R}_{+}\times \mathbb{R}_{+}$. For $a=\{a_m\}_{m=1}^{\infty}$, let $T$ be the operator with kernel $K$ defined as
$$Ta(x)=\sum_{m=1}^{\infty}K(m,x)a_m,\,\,\, x\in \mathbb{R}_{+}.$$
Then $T$ is bounded from $l^{p}$ to $L^{1}$ if and only if 
$$\int_{\mathbb{R}_{+}}K(m,x)dx\in l^{p'}.$$
Moreover, when $T$ is bounded from $l^{p}$ to $L^{1}$, the norm of $T$ is given by
$$\|T\|_{l^{p}\rightarrow L^1}=\|\int_{\mathbb{R}_{+}}K(m,x)dx\|_{p'}.$$
\end{lemma}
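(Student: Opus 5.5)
The idea is that an $L^1$ norm of a nonnegative-kernel image reduces, by Tonelli, to a single linear functional on the sequence. Then $l^p$--$l^{p'}$ duality finishes the argument. Put $k_m:=\int_{\mathbb{R}_{+}}K(m,x)\,dx\in[0,\infty]$ and $k=\{k_m\}_{m=1}^{\infty}$.

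\emph{Sufficiency.} Suppose $k\in l^{p'}$. For any $a\in l^p$, Tonelli's theorem (the summands $K(m,x)|a_m|$ are nonnegative) followed by H\"older's inequality give
\begin{equation*}
\|Ta\|_1\le\int_{\mathbb{R}_{+}}\sum_{m=1}^{\infty}K(m,x)|a_m|\,dx=\sum_{m=1}^{\infty}k_m|a_m|\le \|k\|_{p'}\|a\|_p .
\end{equation*}
In particular the series defining $Ta(x)$ converges absolutely for a.e.\ $x$, so $Ta$ is well defined. This also shows $\|T\|_{l^p\to L^1}\le\|k\|_{p'}$. The case $p=1$, $p'=\infty$ is included, with $\sup_m k_m$ in place of $\|k\|_{p'}$.

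\emph{Necessity and the lower bound for the norm.} Suppose $T$ is bounded with norm $C$. First, taking $a=e_m$ (the $m$-th unit vector) gives $k_m=\|Te_m\|_1\le C$, so every $k_m$ is finite. For $p=1$ this already yields $\|k\|_\infty\le C$. For $1<p<\infty$, fix $N$ and test with the nonnegative truncated extremal sequence $a_m=k_m^{p'-1}$ for $m\le N$ and $a_m=0$ for $m>N$. Since $a\ge0$ and $K\ge0$, Tonelli gives equality in the display above:
\begin{equation*}
\|Ta\|_1=\sum_{m\le N}k_m^{p'}\le C\Big(\sum_{m\le N}k_m^{p'}\Big)^{1/p}.
\end{equation*}
Hence $\big(\sum_{m\le N}k_m^{p'}\big)^{1/p'}\le C$. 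If the partial sum is zero, there is nothing to prove. Letting $N\to\infty$ gives $k\in l^{p'}$ and $\|k\|_{p'}\le C$. Combined with the sufficiency part, this proves that $\|T\|_{l^p\to L^1}=\|k\|_{p'}$.

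\emph{Main obstacle.} The only step needing care is the necessity direction. Before testing with $k_m^{p'-1}$ we must know that the $k_m$ are finite, which is why the unit-vector step comes first. We also truncate at $N$ so that the test sequence is genuinely in $l^p$ before $k\in l^{p'}$ is known. Everything else is Tonelli's theorem for nonnegative kernels and the standard duality $(l^p)^*=l^{p'}$.
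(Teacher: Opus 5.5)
Your proof is correct and complete. Tonelli turns $\|Ta\|_1$ for $a\ge 0$ into the pairing $\sum_m k_m a_m$, Hölder gives $\|T\|\le\|k\|_{p'}$, and the unit vectors followed by the truncated test sequences $a_m=k_m^{p'-1}$ for $m\le N$ give finiteness of each $k_m$ and the reverse bound (with $p=1$ handled by the unit vectors alone). The paper gives no proof of this lemma; it only cites Tao's Problem 5.5 for a more general version, and your argument is the standard one behind that reference: it is the dual of Lemma~\ref{ll-2}, since the adjoint of $T$ applied to the constant function $1$ is exactly the sequence $k$.
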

%\begin{remark}In particular, when $p=1$ so that $p'=\infty$, $T$ is bounded from $l^{1}$ to $L^{1}$ if and only if 
%$$\sup_{m\in \mathbb{N}}\int_{\mathbb{R}_{+}}K(m,x)dx<\infty.$$ 
%\end{remark}
The following is a dual version of Lemma \ref{ll-1}. 

\begin{lemma}\label{ll-2}
Let $1\leq p\leq\infty$. Let $K(y,x)$ be a measurable function on $\mathbb{R}_{+}\times \mathbb{R}_{+}$ with $0 \leq K(y,x)<\infty$ for all $(y,x)\in \mathbb{R}_{+}\times \mathbb{R}_{+}$. For $a=\{a_m\}_{m=1}^{\infty}$, let $T$ be the operator with kernel $K$ defined as
$$Ta(x)=\sum_{m=1}^{\infty}K(m,x)a_m,\,\,\, x\in \mathbb{R}_{+}.$$
Then $T$ is bounded from $l^{\infty}$ to $L^{p}$ if and only if 
$$\sum_{m=1}^{\infty}K(m,x)\in L^{p}.$$
Moreover, when  $T$ is bounded from $l^{\infty}$ to $L^{p}$, the norm of $T$ is given by
$$\|T\|_{l^{\infty}\rightarrow L^{p}}=\|\sum_{m=1}^{\infty}K(m,x)\|_{p}.$$
\end{lemma}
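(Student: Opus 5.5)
The plan is to prove both directions of Lemma \ref{ll-2} directly, starting with sufficiency. Set $S(x):=\sum_{m=1}^{\infty}K(m,x)$, which is a measurable $[0,\infty]$-valued function, and assume $S\in L^{p}$. For $a\in l^{\infty}$ the series defining $Ta(x)$ is dominated by $\|a\|_{\infty}S(x)$. Since $S(x)<\infty$ for almost every $x$, the series converges absolutely a.e., $Ta$ is measurable as an a.e. limit of partial sums, and $|Ta(x)|\le \|a\|_{\infty}S(x)$ pointwise a.e. Taking $L^{p}$ norms, which is valid for $1\le p\le\infty$ because the norm is monotone, gives $\|Ta\|_{p}\le \|S\|_{p}\|a\|_{\infty}$. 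This yields boundedness together with $\|T\|_{l^{\infty}\to L^{p}}\le \|S\|_{p}$.

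For necessity and the reverse norm inequality, I will test on the constant sequence $a=\mathbf{1}=\{1,1,\dots\}$, which lies in $l^{\infty}$ with $\|\mathbf{1}\|_{\infty}=1$. Here $T\mathbf{1}(x)=S(x)$ exactly; because $K\ge0$, the sum is defined pointwise in $[0,\infty]$ with no convergence issue. If $T$ is bounded from $l^{\infty}$ to $L^{p}$, then $T\mathbf{1}\in L^{p}$, that is, $S\in L^{p}$, and $\|S\|_{p}=\|T\mathbf{1}\|_{p}\le \|T\|_{l^{\infty}\to L^{p}}$. Combined with the first paragraph, this gives equality of the norms.

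The only delicate point is interpretive. One has to make precise what "bounded" means when $Ta$ might a priori be undefined; this is a matter of bookkeeping rather than a real obstacle. I would note that boundedness presupposes $Ta(x)$ is defined a.e. and lies in $L^{p}$ for every $a\in l^{\infty}$, in particular for $a=\mathbf{1}$. Alternatively, one can avoid the infinite sequence entirely by testing on truncations $a^{(N)}=(1,\dots,1,0,0,\dots)$. Then $Ta^{(N)}=\sum_{m\le N}K(m,\cdot)$ has $L^{p}$ norm at most $\|T\|$ for every $N$, and letting $N\to\infty$ by monotone convergence (or by monotonicity of the essential supremum when $p=\infty$) gives $\|S\|_{p}\le\|T\|$.
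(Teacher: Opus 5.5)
Your argument is correct and complete. The domination $|Ta(x)|\le \|a\|_{\infty}\sum_{m}K(m,x)$ gives the upper bound, and testing on the constant sequence $\mathbf{1}$ (or its truncations, with monotone convergence) gives the reverse inequality; this is the standard proof.

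The paper does not prove this lemma at all. It only points to the more general version in Tao's notes (Problem 5.5). Your write-up therefore supplies the omitted argument, specialized to counting measure on $\mathbb{N}$ and Lebesgue measure on $\mathbb{R}_{+}$.
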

\begin{remark}In particular, when $p=\infty$, $T$ is bounded from $l^{\infty}$ to $L^{\infty}$ if and only if 
$$\sup_{x\in \mathbb{R}_{+}}\sum_{m=1}^{\infty}K(m,x)<\infty.$$
\end{remark}

We also need the following generalized Schur’s test, see \cite{S} for its general form. 
\begin{lemma}\label{ll-3}
Let $1<q\leq p <\infty$. Let $K(y,x)$ be a measurable function on $\mathbb{R}_{+}\times \mathbb{R}_{+}$ with $0 \leq K(y,x)<\infty$ for all $(y,x)\in \mathbb{R}_{+}\times \mathbb{R}_{+}$. For $a=\{a_m\}_{m=1}^{\infty}$, let $T$ be the operator with kernel $K$ defined as
$$Ta(x)=\sum_{m=1}^{\infty}K(m,x)a_m,\,\,\, x\in \mathbb{R}_{+}.$$
If there exist two constants $C_1>0, C_2>0$ and a sequence $u=\{u_n\}_{n=1}^{\infty}$ with $|u_n| \leq C_1$ for all $n\in \mathbb{N}$ and satisfying $I_{K}u(n) \leq C_2 u(n)$ for all $n\in \mathbb{N}$, then $T$ is bounded from $l^{p}$ to $L^{q}.$ Here
$$I_{K}u(n)=\Big(\int_{\mathbb{R}_{+}}K(n,x)\Big(\sum_{m=1}^{\infty}K(m,x)u_m\Big)^{q-1}dx\Big)^{p'-1},\,\, n\in \mathbb{N}.$$
\end{lemma}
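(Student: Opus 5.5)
Lemma \ref{ll-3} is a generalized Schur test: $1<q\le p<\infty$ and a bounded positive test sequence $u$ with $I_K u\le C_2 u$. The plan is to prove it by factorization and duality, following Sinnamon's argument. Put $v(x)=\sum_m K(m,x)u_m$ and assume $u_m>0$ (we may replace $u$ by $u+\varepsilon$; if some $u_n=0$ the hypothesis forces $K(n,\cdot)v^{q-1}=0$, and that case is treated separately). The hypothesis says
$$\int_{\mathbb{R}_+}K(n,x)v(x)^{q-1}\,dx\le C_2^{\,p-1}u_n^{\,p-1},$$
because $p'-1=1/(p-1)$. We may also take $a\ge 0$, since $K\ge0$ gives $|Ta|\le T|a|$.

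\textbf{Step 1 (pointwise Hölder).} Write $K(m,x)a_m=\big(K(m,x)u_m\big)^{1/q'}\cdot K(m,x)^{1/q}u_m^{-1/q'}a_m$ and apply Hölder with exponents $q',q$ in $m$. This gives
$$Ta(x)^q\le v(x)^{q-1}\sum_m K(m,x)\,u_m^{1-q}a_m^q.$$

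\textbf{Step 2 (integrate in $x$ and use the hypothesis).} By Tonelli,
$$\|Ta\|_q^q\le\sum_m u_m^{1-q}a_m^q\int K(m,x)v^{q-1}\,dx\le C_2^{p-1}\sum_m a_m^q\,u_m^{p-q}.$$

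\textbf{Step 3 (boundedness of $u$).} Since $p\ge q$ and $u_m\le C_1$, we have $u_m^{p-q}\le C_1^{p-q}$. Hence $\|Ta\|_q^q\le C_2^{p-1}C_1^{p-q}\sum_m a_m^q$, and $\sum a_m^q=\|a\|_q^q$.

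Step 3 exposes the main obstacle: the bound is in terms of $\|a\|_q$, and $\|a\|_q\ge\|a\|_p$ for $q\le p$. So a naive Hölder step yields only $l^q\to L^q$ boundedness. For $p=q$ this is already the classical Schur test and we are done; for $q<p$ the argument has to be refined. The remedy I would try first is to split the Hölder step with an $a$-dependent weight. Write
$$K a_m=\big(K u_m\big)^{1/q'}\,\big(K\,u_m^{-(q-1)}a_m^q\big)^{1/q}$$
but apply Hölder only after inserting $a_m^{p/q}$-type factors, i.e. choose the weight $w_m=u_m a_m^{\,p-q}$-type so that the final sum becomes $\sum a_m^p$. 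Then bound the leftover factor by Hölder in $m$ with exponents $p/q$ and $p/(p-q)$. This is exactly Sinnamon's scheme: the test function $v^{q-1}$ appears with the exponent $p'-1$ that is built into $I_K$, and the powers must be balanced so that $\sum_m a_m^p$ appears with total exponent $q/p$. The check to carry out is that the exponents match: this requires $p'-1$ raised to $(q-1)$-type powers to combine into $(p-1)(p'-1)=1$.

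If that balancing fails, the fallback is to invoke the general form in \cite{S} directly, specialized to counting measure on $\mathbb{N}$ and Lebesgue measure on $\mathbb{R}_+$. This is legitimate since the lemma is cited from there.
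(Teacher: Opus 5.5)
\textbf{There is a genuine gap in the case $q<p$, and it cannot be closed: as stated, the lemma is false there.}

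Your Steps 1--2 are correct. For $p=q$, and granting $u_n>0$, they already finish the proof; boundedness of $u$ is not even used.

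For $q<p$ the obstruction is exact. After Step 2 you have $\|Ta\|_q^q\le C_2^{p-1}\sum_m a_m^q u_m^{p-q}$. By $l^{p/q}$--$l^{p/(p-q)}$ duality, $\sup_{\|a\|_p\le 1}\sum_m a_m^q u_m^{p-q}=\|u\|_p^{p-q}$, which is infinite for a merely bounded $u$ such as $u\equiv 1$. Your ``$a$-dependent weight'' idea does not help. The hypothesis controls $\int K(n,x)\big(\sum_m K(m,x)u_m\big)^{q-1}dx$ only for the one fixed sequence $u$, so it gives no information about a weight built from $a$.

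Here is a counterexample with only $|u_n|\le C_1$. Let $K(y,x)=1$ if $y-1\le x<y$ and $K(y,x)=0$ otherwise. Then $Ta(x)=a_{\lfloor x\rfloor+1}$ and $\|Ta\|_q=\|a\|_q$. Take $u\equiv 1$. Then $\sum_m K(m,x)u_m\equiv 1$ and $I_Ku(n)=1=u_n$, so all hypotheses hold with $C_1=C_2=1$. Yet $a_m=m^{-1/q}$ lies in $l^p$ while $\|Ta\|_q=\infty$. So your fallback of invoking the general form in Sinnamon's paper cannot rescue the statement either. The paper itself gives no argument beyond that citation, and the hypothesis $|u_n|\le C_1$ it records is simply the wrong one.

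\textbf{What is needed for $q<p$ is $u\in l^p$ with $u_n>0$.} This is exactly what the paper's only use of the lemma supplies. In the proof of Theorem \ref{th-r-1} the test sequence is $u_m=m^{-(1+\varepsilon)/p}$, with $\|u\|_p^p=\varepsilon^{-1}[1+o(1)]$. Under that hypothesis your own argument closes at once, with no balancing of exponents:
\[
\|Ta\|_q^q\le C_2^{p-1}\sum_m a_m^q u_m^{p-q}\le C_2^{p-1}\|u\|_p^{p-q}\|a\|_p^{q}.
\]

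Positivity must be assumed, not arranged. The sequence $u\equiv 0$ satisfies $I_Ku\le C_2u$ for every kernel. Taking $K\equiv 1$, $T$ is not even finite on $l^p$, so the statement fails even for $p=q$ without positivity. Your reduction ``replace $u$ by $u+\varepsilon$'' is also invalid. $I_K(u+\varepsilon)$ involves $\sum_m K(m,x)$, which may be infinite, so the hypothesis need not survive the perturbation.
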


We will use the following lemmas in our later arguments.

\begin{lemma}\label{ll-4}Let $\tau, \lambda$ be two real numbers and let $t>0$ be fixed. Then the integral 
$$I(t)=\int_{\mathbb{R}_{+}}\frac{x^{\tau}}{[\max\{t, x\}]^{\lambda}}dx,$$
converges if and only if $\tau>-1$ and $\lambda-\tau-1>0$. When the integral converges, we have
$$I(t)=t^{\tau+1-\lambda}\Big(\frac{1}{\tau+1}+\frac{1}{\lambda-\tau-1}\Big).$$ 
\end{lemma}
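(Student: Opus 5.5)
The plan is to split the integral at $x=t$, since $\max\{t,x\}$ equals $t$ on $(0,t]$ and $x$ on $(t,\infty)$. This gives
\begin{equation*}
I(t)=t^{-\lambda}\int_0^t x^{\tau}\,dx+\int_t^{\infty}x^{\tau-\lambda}\,dx .
\end{equation*}
The integrand is non-negative, so $I(t)$ converges if and only if both pieces converge. This reduces everything to the elementary convergence criteria for power integrals.

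For the first piece, $\int_0^t x^{\tau}dx$ is finite exactly when $\tau>-1$, because the only possible singularity is at $0$. In that case it equals $t^{\tau+1}/(\tau+1)$. For the second piece, $\int_t^\infty x^{\tau-\lambda}dx$ is finite exactly when $\tau-\lambda<-1$, i.e. $\lambda-\tau-1>0$, because the only issue is at infinity and $t>0$ is fixed. In that case it equals $t^{\tau-\lambda+1}/(\lambda-\tau-1)$.

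For necessity, if $\tau\le -1$ the first piece diverges (a logarithmic or power blow-up at $0$). If $\lambda-\tau-1\le 0$ the second piece diverges at infinity. Either way, by non-negativity, $I(t)=\infty$.

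When both conditions hold, I add the two values: $t^{-\lambda}\cdot t^{\tau+1}/(\tau+1)+t^{\tau+1-\lambda}/(\lambda-\tau-1)$. Factoring out $t^{\tau+1-\lambda}$ gives the stated formula.

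There is no real obstacle here. The only point needing care is to argue necessity through non-negativity, so that divergence of either piece forces divergence of $I(t)$, with no cancellation possible.
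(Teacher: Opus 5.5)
Your proposal is correct and follows the paper's proof essentially verbatim: you split the integral at $x=t$, reduce to the two elementary power integrals, and add the resulting values to get $t^{\tau+1-\lambda}\bigl(\frac{1}{\tau+1}+\frac{1}{\lambda-\tau-1}\bigr)$. Your explicit use of non-negativity for the necessity direction is a small addition that the paper leaves implicit.
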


\begin{proof}
For fixed $t>0$, note that 
\begin{equation}\label{m-e}
I(t)=t^{-\lambda}\int_{0}^{t}x^{\tau}dx+\int_{t}^{\infty}x^{\tau-\lambda}dx.
\end{equation}
Then we conclude that $I(t)<\infty$ if and only if $\tau+1>0$ and $\lambda-\tau-1>0$. On the other hand, when $\tau+1>0$ and $\lambda-\tau-1>0$, it follows from (\ref{m-e}) that 
\begin{eqnarray} 
I(t)=t^{\tau+1-\lambda}\Big(\frac{1}{\tau+1}+\frac{1}{\lambda-\tau-1}\Big).\nonumber 
\end{eqnarray} 
The lemma is proved. 
\end{proof}

\begin{lemma}\label{ll-5}Let $\tau, \lambda$ be real numbers and $x>0$ be fixed. Then 
 $$S(x)=\sum_{m=1}^{\infty}\frac{m^{\tau}}{[\max\{m,x\}]^{\lambda}}<\infty$$
 if and only if $\lambda-\tau-1>0$. Moreover, {\bf (1)} when $\lambda-\tau-1>0$ and $-1<\tau\leq 0$, we have
$$S(x)\leq \Big(\frac{1}{\tau+1}+\frac{1}{\lambda-\tau-1}\Big)x^{\tau+1-\lambda},\,\,\,\, x>0,$$

{\bf (2)}
when $\lambda-\tau-1>0$ and $\tau>0$, we have
$$S(x)\leq C x^{\tau+1-\lambda},\,\,\, x>0,$$

%{\bf (3)} when $\lambda-\tau-1>0$ and $\tau=-1$, we have $$S(x)\leq C_2 x^{-\lambda-\epsilon},\,\, {\text for}\,\, x>1,$$ for any $\epsilon>0$,

%{\bf (4)} when $\lambda-\tau-1>0$ and $\tau<-1$, we have $$S(x)\leq C_3 x^{-\lambda},\,\, {\text for}\,\, x>1.$$ 
Here, $C$ is a positive constant which is independent of $x$. 
\end{lemma}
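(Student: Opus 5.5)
We split $S(x)$ according to whether $m\le x$ or $m>x$:
$$S(x)=x^{-\lambda}\sum_{1\le m\le x}m^{\tau}+\sum_{m>x}m^{\tau-\lambda}.$$
The first sum is finite, since it has at most $\lfloor x\rfloor$ terms and is empty when $x<1$. The second is a tail of $\sum_m m^{\tau-\lambda}$. It therefore converges if and only if $\lambda-\tau>1$, which gives the convergence criterion. The two bounds then come from comparing each sum with the corresponding integral in Lemma \ref{ll-4}. The direction of the comparison depends on whether $m^{\tau}$ is increasing or decreasing.

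For part (1), take $-1<\tau\le 0$, so $m^\tau$ is nonincreasing. For $m\ge1$ we have $m^{\tau}\le\int_{m-1}^{m}s^{\tau}ds$, which gives
$$x^{-\lambda}\sum_{m\le x}m^\tau\le x^{-\lambda}\int_0^{\lfloor x\rfloor}s^\tau ds\le x^{-\lambda}\int_0^{x}s^\tau ds.$$
The integral at $0$ converges because $\tau>-1$. The exponent $\tau-\lambda<-1$ is negative, so $m^{\tau-\lambda}$ is decreasing. With $m_0=\lfloor x\rfloor+1$ we get
$$\sum_{m>x}m^{\tau-\lambda}\le\int_{m_0-1}^\infty s^{\tau-\lambda}ds.$$
This is not quite $\int_x^\infty$, because $m_0-1=\lfloor x\rfloor\le x$. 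The fix is to use the kernel itself. The function $s\mapsto s^\tau[\max\{s,x\}]^{-\lambda}$ is nonincreasing on all of $(0,\infty)$ once $\lambda\ge\tau$: on $s<x$ it is $x^{-\lambda}s^\tau$ with $\tau\le0$, and on $s>x$ it is $s^{\tau-\lambda}$. The two pieces agree at $s=x$. Hence each term satisfies $m^\tau[\max\{m,x\}]^{-\lambda}\le\int_{m-1}^m s^\tau[\max\{s,x\}]^{-\lambda}ds$. Summing over $m\ge1$ gives $S(x)\le I(x)$, and Lemma \ref{ll-4} yields exactly the stated constant $\frac1{\tau+1}+\frac1{\lambda-\tau-1}$.

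For part (2), take $\tau>0$. Now the kernel increases on $(0,x)$ and decreases on $(x,\infty)$, so we bound the two pieces separately with a looser constant. For the first piece, $\sum_{m\le x}m^\tau\le x\cdot x^{\tau}$, so it is at most $x^{\tau+1-\lambda}$. For the second piece, when $x\ge1$, compare with the integral shifted by one:
$$\sum_{m>x}m^{\tau-\lambda}\le \lceil x\rceil^{\tau-\lambda}+\int_x^\infty s^{\tau-\lambda}ds.$$
Since $\tau-\lambda<0$, we have $\lceil x\rceil^{\tau-\lambda}\le x^{\tau-\lambda}\le x^{\tau+1-\lambda}$ for $x\ge1$. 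Together these give $S(x)\le Cx^{\tau+1-\lambda}$ for $x\ge1$.

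The main obstacle is the range $0<x<1$. There the first sum is empty, and $S(x)=\sum_{m\ge1}m^{\tau-\lambda}=\zeta(\lambda-\tau)$ is a constant. The required bound $Cx^{\tau+1-\lambda}$ still holds on this range, because $\tau+1-\lambda<0$ forces $x^{\tau+1-\lambda}>1$. So $S(x)\le\zeta(\lambda-\tau)\,x^{\tau+1-\lambda}$ there. We take $C=\max\{\zeta(\lambda-\tau),\,2+\frac1{\lambda-\tau-1}\}$, which depends only on $\tau$ and $\lambda$.

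The same observation about $x<1$ shows that part (1) needs no separate treatment of that range: the monotone comparison $S(x)\le I(x)$ holds for every $x>0$.
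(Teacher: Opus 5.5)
Your proof is correct and follows essentially the paper's route: convergence by comparison with $\sum m^{\tau-\lambda}$, part (1) by bounding $S(x)$ by the integral $I(x)$ of Lemma~\ref{ll-4}, and part (2) by the constant bound $\zeta(\lambda-\tau)$ for small $x$ together with a split of the sum at $x$ and an integral comparison for larger $x$. Two of your details improve on the paper. Your monotonicity argument for the kernel justifies $S(x)\le I(x)$, which the paper leaves implicit. Your count of at most $x$ terms, each at most $x^{\tau}$, avoids the paper's slip of bounding $\lceil x\rceil^{\tau+1}x^{-\lambda}$ by $x^{\tau+1-\lambda}$; that bound fails when $\tau+1>0$ unless a factor such as $2^{\tau+1}$ is inserted.
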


\begin{proof}
For fixed $x>0$, we note that 
$$\frac{1}{m^{\tau-\lambda}}\cdot\frac{m^{\tau}}{[\max\{m,x\}]^{\lambda}}\rightarrow 1,\,\, {\text{as}}\,\, m\rightarrow \infty.$$ 
Then we see that $S(x)<\infty$ if and only if $\lambda-\tau>1$. Moreover, 

Case 1. When $\lambda-\tau-1>0$ and $-1<\tau\leq 0$, we obtain from Lemma \ref{ll-4} that 
\begin{eqnarray}
S(x)\leq \int_{\mathbb{R}_{+}}\frac{t^{\tau}}{[\max\{{t,x}\}]^{\lambda}}dt=t^{\tau+1-\lambda}\Big(\frac{1}{\tau+1}+\frac{1}{\lambda-\tau-1}\Big). \nonumber
\end{eqnarray}

Case 2. When $\lambda-\tau-1>0$ and $\tau>0$, we have, for $x\in (0,1]$, 
\begin{eqnarray}\label{s-1}
S(x)&=&\sum_{m=1}^{\infty}\frac{m^{\tau}}{m^{\lambda}}=\sum_{m=1}^{\infty}\frac{1}{m^{\lambda-\tau}}\leq C_1 x^{\tau+1-\lambda}.\nonumber
\end{eqnarray}
Here we can take $C_1=\sum_{m=1}^{\infty}\frac{1}{m^{\lambda-\tau}}$ since $x^{\tau+1-\lambda}\geq 1$. 

When $\lambda-\tau-1>0$ and $\tau>0$, we have, for $x>1$, 
\begin{eqnarray}\label{s-2}
S(x)&=&\sum_{m=1}^{\lceil x \rceil}\frac{m^{\tau}}{x^{\lambda}}+\sum_{m=\lceil x \rceil+1}^{\infty}\frac{m^{\tau}}{m^{\lambda}}\nonumber \\
&\leq&  \lceil x \rceil \frac{\lceil x \rceil^{\tau}}{x^{\lambda}}+\int_{\lceil x \rceil}^{\infty}\frac{1}{t^{\lambda-\tau}}dt\nonumber \\
&\leq & x^{\tau+1-\lambda}+\frac{1}{\lambda-\tau-1}\lceil x \rceil^{\tau+1-\lambda}.\nonumber
\end{eqnarray}
Here, $\lceil x \rceil$ is the ceiling function. Note that $\lceil x \rceil^{\tau+1-\lambda}=1$ for each $x\in (1,2)$ so that
$$\lceil x \rceil^{\tau+1-\lambda}\leq 2^{\lambda-\tau-1}x^{\tau+1-\lambda},$$
 and for each $x\geq 2$, we have $\lceil x \rceil\geq \frac{1}{2}x$ so that
$$\lceil x \rceil^{\tau+1-\lambda} \leq \frac{1}{2^{\tau+1-\lambda}}x^{\tau+1-\lambda}.$$ 
Consequently, we obtain that, for $x>1$, 
\begin{eqnarray}
S(x)&\leq & C_2 x^{\tau+1-\lambda}. \nonumber 
\end{eqnarray}
Here we can take 
$C_2=2^{\lambda-\tau-1}+\frac{2^{\lambda-\tau-1}}{\lambda-\tau-1}.$
Hence, when $\lambda-\tau-1>0$ and $\tau>0$, it holds that, for any $x>0$, 
$$S(x)\leq (C_1+C_2)x^{\tau+1-\lambda}=C x^{\tau+1-\lambda}.$$
%Case 3. For $\lambda-\tau-1>0$ and $\tau\leq -1$, we have, for $x>1$, 
%\begin{eqnarray}\label{s-3} S(x)&=&\sum_{m=1}^{\lceil x \rceil}\frac{m^{\tau}}{x^{\lambda}}+\sum_{m=\lceil x \rceil+1}^{\infty}\frac{m^{\tau}}{m^{\lambda}}\nonumber \\&\leq& x^{-\lambda}(1+\int_{1}^{\lceil x \rceil}t^{\tau}dt)+\int_{\lceil x \rceil}^{\infty}\frac{1}{t^{\lambda-\tau}}dt.\nonumber\end{eqnarray}
%Then, when $\tau=-1$ so that $\lambda>0$, we easily see that there is a constant $C_2>0$, which is independent of $x$, such that 
% \begin{eqnarray}\label{s-4}
%S(x)&\leq& x^{-\lambda}(1+\ln\lceil x \rceil)+\frac{1}{\lambda}\lceil x \rceil^{-\lambda}\leq C_3 x^{-\lambda-\epsilon},\nonumber
%\end{eqnarray}
%for any $\epsilon>0$ since $\lim_{t\rightarrow +\infty}\frac{\ln t}{t^{\epsilon}}=0$ for any $\epsilon>0$. When $\tau<-1$ so that $x^{\tau+1}\leq 1$ for $x>1$, then we get that 
% \begin{eqnarray}\label{s-4}
%S(x)&\leq& x^{-\lambda}[1+\frac{1}{\tau+1}(\lceil x \rceil^{\tau+1}-1)]+\frac{1}{\lambda-\tau-1}\lceil x \rceil^{\tau+1-\lambda}\leq C_3 x^{-\lambda}.\nonumber
%\end{eqnarray}
Here, $C>0$ is independent of $x$. This proves Lemma \ref{ll-5}.
\end{proof}

\begin{remark}\label{f-re}When $\lambda-\tau-1>0$, for fixed $x>0$, we note that $\frac{m^{\tau}}{[\max\{m,x\}]^{\lambda}}$ is decreasing with respect to $m$ for $m\geq\lceil x \rceil+1$. 
Then it is not hard to see that, for fixed $x>0$,
$$  
S(x)\geq \int_{\lceil x \rceil+1}^{\infty}x^{\tau-\lambda}dx\geq \frac{(\lceil x \rceil+1)^{\tau+1-\lambda}}{\lambda-\tau-1}\geq \frac{(x+1)^{\tau+1-\lambda}}{\lambda-\tau-1}.$$
\end{remark}

\begin{lemma}\label{ll-5-1}Let $\beta, \tau, \lambda$ be real numbers. 
 If $\beta<\lambda$, $\tau=-1$, $\lambda>0$, or $\beta \leq \lambda$, $\tau<-1$, $\lambda-\tau-1>0$, then 
 $$A:=\sup_{x>1}x^{\beta}\sum_{m=1}^{\infty}\frac{m^{\tau}}{[\max\{m,x\}]^{\lambda}}<\infty.$$
\end{lemma}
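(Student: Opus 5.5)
For fixed $x>1$ we split the sum at $\lceil x\rceil$, exactly as in the proof of Lemma \ref{ll-5}. This gives
\begin{equation*}
x^{\beta}S(x)=x^{\beta-\lambda}\sum_{m=1}^{\lceil x\rceil}m^{\tau}+x^{\beta}\sum_{m=\lceil x\rceil+1}^{\infty}m^{\tau-\lambda}=:J_1(x)+J_2(x).
\end{equation*}
We bound $J_1$ and $J_2$ separately, uniformly in $x>1$.

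\textbf{The tail $J_2$.} In both cases $\lambda-\tau-1>0$: when $\tau=-1$ this is just $\lambda>0$. Since $m^{\tau-\lambda}$ is decreasing in $m$ (compare Remark \ref{f-re}), we get
\begin{equation*}
J_2(x)\leq x^{\beta}\int_{\lceil x\rceil}^{\infty}t^{\tau-\lambda}dt=\frac{x^{\beta}\lceil x\rceil^{\tau+1-\lambda}}{\lambda-\tau-1}\leq \frac{x^{\beta+\tau+1-\lambda}}{\lambda-\tau-1},
\end{equation*}
using $\lceil x\rceil\geq x$ and the negative exponent. Since $\tau\leq -1$ and $\beta\leq\lambda$, we have $\beta+\tau+1-\lambda\leq 0$. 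Hence $J_2(x)\leq (\lambda-\tau-1)^{-1}$ for $x>1$.

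\textbf{The head $J_1$, case $\tau<-1$.} Here $\sum_{m=1}^{\lceil x\rceil}m^{\tau}\leq\zeta(-\tau)<\infty$. Since $\beta\leq\lambda$ and $x>1$, we have $x^{\beta-\lambda}\leq 1$. So $J_1(x)\leq\zeta(-\tau)$, which is bounded.

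\textbf{The head $J_1$, case $\tau=-1$.} This is the only delicate step, because the partial harmonic sum grows logarithmically. We have
\begin{equation*}
\sum_{m=1}^{\lceil x\rceil}\frac1m\leq 1+\ln\lceil x\rceil\leq 1+\ln(x+1),
\end{equation*}
so $J_1(x)\leq x^{\beta-\lambda}\bigl(1+\ln(x+1)\bigr)$. Here the strict inequality $\beta<\lambda$ is used: set $\epsilon=\lambda-\beta>0$. The function $x^{-\epsilon}(1+\ln(x+1))$ is continuous on $[1,\infty)$ and tends to $0$ as $x\to\infty$, because $\ln t/t^{\epsilon}\to 0$. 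It is therefore bounded on $(1,\infty)$.

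Adding the bounds for $J_1$ and $J_2$ in each case gives $A<\infty$. The only real obstacle is the logarithm in the $\tau=-1$ case, and this is exactly why that case needs $\beta<\lambda$ strictly rather than $\beta\leq\lambda$.
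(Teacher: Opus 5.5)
Your argument is correct and is essentially the paper's proof: you split at $\lceil x\rceil$, control the head by the convergent series $\sum m^{\tau}$ when $\tau<-1$, control it by the logarithmic harmonic sum absorbed by $x^{\beta-\lambda}$ (using $\beta<\lambda$ strictly) when $\tau=-1$, and bound the tail by an integral; the only cosmetic difference is that you multiply by $x^{\beta}$ before estimating, whereas the paper first shows $S(x)\le Cx^{-\lambda+\epsilon}$, resp.\ $S(x)\le Cx^{-\lambda}$. One small slip, which the paper shares: your displayed splitting is not an identity when $x\notin\mathbb{N}$, since the term $m=\lceil x\rceil>x$ equals $\lceil x\rceil^{\tau-\lambda}$ rather than $\lceil x\rceil^{\tau}x^{-\lambda}$, and when $\lambda<0$ (permitted in the case $\tau<-1$) it is the larger of the two, so either split at $\lfloor x\rfloor$ or bound that single term separately by $x^{\beta}\lceil x\rceil^{\tau-\lambda}\le x^{\beta+\tau-\lambda}\le 1$.
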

\begin{proof}
We still let
$$S(x)=\sum_{m=1}^{\infty}\frac{m^{\tau}}{[\max\{m,x\}]^{\lambda}}.$$
When $\lambda-\tau-1>0$ and $\tau\leq -1$, we have, for $x>1$, 
\begin{eqnarray}\label{s-3}
S(x)&=&\sum_{m=1}^{\lceil x \rceil}\frac{m^{\tau}}{x^{\lambda}}+\sum_{m=\lceil x \rceil+1}^{\infty}\frac{m^{\tau}}{m^{\lambda}}\nonumber \\
&\leq& x^{-\lambda}(1+\int_{1}^{\lceil x \rceil}t^{\tau}dt)+\int_{\lceil x \rceil}^{\infty}\frac{1}{t^{\lambda-\tau}}dt.\nonumber 
\end{eqnarray}
We will consider the following two cases. 

Case 1. When $\tau=-1$, we easily see that there is a constant $C_1>0$, which is independent of $x$ and such that 
 \begin{eqnarray}\label{s-4}
S(x)&\leq& x^{-\lambda}(1+\ln\lceil x \rceil)+\frac{1}{\lambda}\lceil x \rceil^{-\lambda}\leq C_1 x^{-\lambda+\epsilon},\nonumber
\end{eqnarray}
for any $\epsilon>0$ since $\lim_{t\rightarrow +\infty}\frac{\ln t}{t^{\epsilon}}=0$ for any $\epsilon>0$. Consequently, by $\beta-\lambda<0$, we obtain that
$$A\leq C_1\sup_{x>1}x^{\beta}{x^{-\lambda+\epsilon}}=C_1\sup_{x>1}{x^{\beta-\lambda+\epsilon}}\leq C_1,$$
when $\varepsilon<\lambda-\beta$.

Case 2. When $\tau<-1$ so that $x^{\tau+1}\leq 1$ for $x>1$, then we get that 
 \begin{eqnarray}\label{s-4}
S(x)&\leq& x^{-\lambda}[1+\frac{1}{\tau+1}(\lceil x \rceil^{\tau+1}-1)]+\frac{1}{\lambda-\tau-1}\lceil x \rceil^{\tau+1-\lambda}\leq C_2 x^{-\lambda}.\nonumber
\end{eqnarray}
Here, $C_2>0$ is also independent of $x$. It follows from again $\beta-\lambda\leq 0$ that
$$A\leq C_2\sup_{x>1}x^{\beta}{x^{-\lambda}}=C_1\sup_{x>1}{x^{\beta-\lambda}}\leq C_2.$$
This finishes the proof of Lemma \ref{ll-5-1}.
\end{proof}
 
\begin{lemma}\label{ll-l}
Let $\lambda, \tau, \omega$ be real numbers. Then there is a constant $C>0$ such that
 $$\sup_{m\in \mathbb{N}}\frac{m^{\tau}}{[\max\{m,x\}]^{\lambda}}\leq C x^{\omega},$$
for a.e. $x\in \mathbb{R}_{+}$, if $\lambda\geq \tau-\omega$, $\lambda\geq \tau$, $\lambda\geq -\omega$, and $\omega \leq 0.$
\end{lemma}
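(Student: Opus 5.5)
We split according to whether $m\geq x$ or $m<x$, and in each region bound $m$ by the monotone extreme allowed by the sign of the relevant exponent. Throughout we use only $m\geq 1$ and $x>0$. We also reduce to $x\geq 1$ and $x<1$ where needed, because the factor $x^{\omega}$ with $\omega\leq 0$ behaves differently on these two ranges.

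\emph{Region $m\geq x$.} Here the quantity equals $m^{\tau-\lambda}$, and since $\lambda\geq\tau$ the exponent $\tau-\lambda$ is nonpositive.
\begin{itemize}
\item[(i)] If $x\geq 1$, then $m^{\tau-\lambda}\leq x^{\tau-\lambda}$. Since $\tau-\lambda\leq\omega$ (this is exactly $\lambda\geq \tau-\omega$) and $x\geq 1$, we get $x^{\tau-\lambda}\leq x^{\omega}$.
\item[(ii)] If $x<1$, then $m^{\tau-\lambda}\leq 1\leq x^{\omega}$, because $\omega\leq 0$.
\end{itemize}
So this region contributes at most $x^{\omega}$.

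\emph{Region $1\leq m<x$.} This forces $x>1$, and the quantity equals $m^{\tau}x^{-\lambda}$. Its supremum over $m\in[1,x)$ is at most $\max\{1,x^{\tau}\}\,x^{-\lambda}$.
\begin{itemize}
\item[(i)] The bound $x^{\tau-\lambda}\leq x^{\omega}$ holds by $\lambda\geq\tau-\omega$ and $x>1$.
\item[(ii)] The bound $x^{-\lambda}\leq x^{\omega}$ holds by $\lambda\geq-\omega$ and $x>1$.
\end{itemize}
Hence this region also contributes at most $x^{\omega}$.

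Combining the two regions gives the claim with $C=1$, in fact for every $x>0$, not only almost every $x$.

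There is no real obstacle. The only care needed is in the case analysis: noting which hypothesis is used where, namely $\omega\leq 0$ for $x<1$, $\lambda\geq\tau$ for monotonicity in the region $m\geq x$, and the remaining two inequalities for $x>1$.
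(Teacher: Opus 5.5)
Your proof is correct and follows essentially the same case split as the paper: $x\le 1$; then $x>1$ with $m<x$, handled via the sign of $\tau$ (which your bound $\max\{1,x^{\tau}\}$ encodes); and finally $m\ge x$. Both arguments give the bound with $C=1$ for every $x>0$; the only cosmetic difference is that for $m\ge x\ge 1$ you pass through $x^{\tau-\lambda}$, whereas the paper uses $x^{-\omega}\le m^{-\omega}$ to reduce to $m^{\tau-\lambda-\omega}\le 1$.
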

\begin{proof}
It suffices to prove that, if $\lambda\geq \tau-\omega$, $\lambda\geq \tau$, $\lambda\geq -\omega$, and $\omega \leq 0$, then  \begin{equation}\label{good}\mathcal{S}:=\sup_{m\in \mathbb{N}, x\in \mathbb{R}_{+}}\frac{m^{\tau}x^{-\omega}}{[\max\{m,x\}]^{\lambda}}<\infty.\nonumber\end{equation}
First, we have
\begin{eqnarray}
\mathcal{S}_1:=\sup_{m\in \mathbb{N}, x\in (0,1]}\frac{m^{\tau}x^{-\omega}}{[\max\{m,x\}]^{\lambda}}=\sup_{m\in \mathbb{N}, x\in (0,1]}\frac{x^{-\omega}}{m^{\lambda-\tau}}\leq 1<\infty.\nonumber
\end{eqnarray}
When $x>1$, we have
\begin{eqnarray}
\mathcal{S}_2:=\sup_{m<x, m\in \mathbb{N}}\frac{m^{\tau}x^{-\omega}}{[\max\{m,x\}]^{\lambda}}=\sup_{m<x, m\in \mathbb{N}}\frac{m^{\tau}}{x^{\lambda+\omega}}.\nonumber
\end{eqnarray}
Case I. If $\tau>0$, then we have 
\begin{eqnarray}
\mathcal{S}_2\leq \sup_{m<x, m\in \mathbb{N}}\frac{1}{x^{\lambda+\omega-\tau}}\leq 1<\infty.\nonumber
\end{eqnarray}
Case II. If $\tau\leq 0$, then we have 
\begin{eqnarray}
\mathcal{S}_2\leq \sup_{m<x, m\in \mathbb{N}}\frac{1}{x^{\lambda+\omega}}\leq 1<\infty.\nonumber
\end{eqnarray}
On the other hand, we have
\begin{eqnarray}
\mathcal{S}_3:&=&\sup_{m\geq x, m\in \mathbb{N}}\frac{m^{\tau}x^{-\omega}}{[\max\{m,x\}]^{\lambda}}\nonumber \\
&=&\sup_{m\geq x, m\in \mathbb{N}}\frac{x^{-\omega}}{m^{\lambda-\tau}}\nonumber
\\&\leq& \sup_{m>x, m\in \mathbb{N}}\frac{1}{m^{\lambda+\omega-\tau}}<\infty.\nonumber
\end{eqnarray}
The lemma follows from $\mathcal{S}=\max\{\mathcal{S}_1, \mathcal{S}_2, \mathcal{S}_3\}$. The proof of Lemma \ref{ll-l} is complete.
\end{proof}

The following lemmas provide the necessary conditions for the boundedness of ${\bf H}_{\lambda, \alpha, \beta}$.
\begin{lemma}\label{ll-4-1}
Let $1\leq p, q<\infty$. Let $\lambda, \alpha, \beta, \varphi, \psi$ be real numbers and $\mathbf{H}_{\lambda, \alpha, \beta}$ be as in (\ref{ope}). We have

{\bf (1)} If $1\leq p\leq q<\infty$ and $\mathbf{H}_{\lambda, \alpha, \beta}$ is bounded from 
$l_{\varphi}^p$ to $L_{\psi}^q$, then 
\begin{equation} 
\begin{cases}
\lambda\geq \alpha+\beta+1+\frac{\psi+1}{q}-\frac{\varphi+1}{p}, \\
-q\beta<\psi+1<q(\lambda-\beta). \nonumber 
\end{cases}
\end{equation}

{\bf (2)} If $1\leq q<p<\infty$ and $\mathbf{H}_{\lambda, \alpha, \beta}$ is bounded from 
$l_{\varphi}^p$ to $L_{\psi}^q$, then 
\begin{equation} 
\begin{cases}
\lambda>\alpha+\beta+1+\frac{\psi+1}{q}-\frac{\varphi+1}{p}, \\
-q\beta<\psi+1<q(\lambda-\beta). \nonumber 
\end{cases}
\end{equation}
\end{lemma}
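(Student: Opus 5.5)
The plan is to test the boundedness inequality $\|\mathbf{H}_{\lambda,\alpha,\beta}a\|_{q,\psi}\leq C\|a\|_{p,\varphi}$ on explicit nonnegative sequences. Since the kernel $\frac{m^{\alpha}x^{\beta}}{[\max\{m,x\}]^{\lambda}}$ is nonnegative, $\mathbf{H}_{\lambda,\alpha,\beta}a(x)$ can be bounded from below by any partial sum. We then read off each condition from the requirement that the resulting integrals (or constants) stay finite.

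\emph{Step 1 (the second line, in both (1) and (2)).} Take $a=e_1$, i.e. $a_1=1$ and $a_m=0$ for $m\geq 2$, so that $\|a\|_{p,\varphi}=1$. Then $\mathbf{H}_{\lambda,\alpha,\beta}a(x)=x^{\beta}[\max\{1,x\}]^{-\lambda}$, and
$$\|\mathbf{H}_{\lambda,\alpha,\beta}a\|_{q,\psi}^q=\int_0^1x^{q\beta+\psi}dx+\int_1^{\infty}x^{q(\beta-\lambda)+\psi}dx .$$
The first integral is finite iff $\psi+1>-q\beta$, and the second is finite iff $\psi+1<q(\lambda-\beta)$. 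This gives $-q\beta<\psi+1<q(\lambda-\beta)$. The argument is the same computation as in Lemma \ref{ll-4} with $t=1$.

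\emph{Step 2 (the non-strict inequality in (1)).} For large $N$, take $a_m=1$ for $N\leq m\leq 2N$ and $a_m=0$ otherwise. Then $\|a\|_{p,\varphi}\asymp N^{(\varphi+1)/p}$. For $x\in[N,2N]$, every term with $N\leq m\leq 2N$ satisfies $m^{\alpha}x^{\beta}[\max\{m,x\}]^{-\lambda}\asymp N^{\alpha+\beta-\lambda}$, so $\mathbf{H}_{\lambda,\alpha,\beta}a(x)\gtrsim N^{1+\alpha+\beta-\lambda}$ there. Restricting the $L^q_\psi$ integral to $[N,2N]$ gives
$$N^{1+\alpha+\beta-\lambda+\frac{\psi+1}{q}}\lesssim N^{\frac{\varphi+1}{p}}\qquad (N\to\infty),$$
which forces $\lambda\geq\alpha+\beta+1+\frac{\psi+1}{q}-\frac{\varphi+1}{p}$. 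This argument does not use $p\leq q$, so it also yields the non-strict inequality in case (2).

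\emph{Step 3 (strictness in (2)).} This is the main point. Suppose $q<p$ and that equality $\lambda=\alpha+\beta+1+\frac{\psi+1}{q}-\frac{\varphi+1}{p}$ holds. Choose $s$ with $\frac1p<s\leq\frac1q$, which is possible since $q<p$. Put
$$a_m=m^{-(\varphi+1)/p}(\log(m+1))^{-s}.$$
Then $\sum m^{\varphi}a_m^p=\sum m^{-1}(\log(m+1))^{-sp}<\infty$ since $sp>1$, so $a\in l^p_\varphi$. For $x\geq 2$, keep only the tail $m\geq x$:
$$\mathbf{H}_{\lambda,\alpha,\beta}a(x)\geq x^{\beta}\sum_{m\geq x}m^{\alpha-\lambda-\frac{\varphi+1}{p}}(\log(m+1))^{-s}.$$
Under the assumed equality, the exponent satisfies $\alpha-\lambda-\frac{\varphi+1}{p}=-1-\beta-\frac{\psi+1}{q}<-1$, because $\beta+\frac{\psi+1}{q}>0$ by Step 1. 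Comparing the tail with an integral (the log factor is slowly varying), the sum is $\gtrsim x^{-\beta-\frac{\psi+1}{q}}(\log x)^{-s}$. Hence $\mathbf{H}_{\lambda,\alpha,\beta}a(x)\gtrsim x^{-\frac{\psi+1}{q}}(\log x)^{-s}$, and
$$\int_2^{\infty}|\mathbf{H}_{\lambda,\alpha,\beta}a(x)|^qx^{\psi}dx\gtrsim\int_2^{\infty}\frac{dx}{x(\log x)^{sq}}=\infty,$$
since $sq\leq1$. This contradicts boundedness, so the inequality must be strict.

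The only delicate point is this last step. One has to choose the logarithmic exponent $s$ in the window $(1/p,1/q]$, which exists precisely because $q<p$. One also has to use the tail $m\geq x$ rather than the head $m\leq x$, so that the lower bound works regardless of the sign of $\alpha+1-\frac{\varphi+1}{p}$. Convergence of that tail is guaranteed by the condition $-q\beta<\psi+1$ already obtained in Step 1.
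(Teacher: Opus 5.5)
Your proof is correct. Your Step 1 is exactly the paper's (testing on $e_1$), but Steps 2 and 3 take a different route.

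The paper uses a single family $a_m^{\varepsilon}=m^{-\frac{\varphi+1}{p}-\frac{\varepsilon}{p}}$, with $\|a_\varepsilon\|_{p,\varphi}^p\approx\varepsilon^{-1}$. It first infers from a.e.\ finiteness of $\mathbf{H}_{\lambda,\alpha,\beta}a_\varepsilon$ and Lemma \ref{ll-5} that the series converge. It then uses the tail bound of Remark \ref{f-re} to get $\|\mathbf{H}_{\lambda,\alpha,\beta}a_\varepsilon\|_{q,\psi}^q\gtrsim\int_1^\infty x^{-1+q\rho-q\varepsilon/p}\,dx$, where $\rho=\alpha+\beta+1+\frac{\psi+1}{q}-\frac{\varphi+1}{p}-\lambda$. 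This integral is infinite if $\rho>0$. In the equality case it is of size $\varepsilon^{-1}$, so the norm ratio grows like $\varepsilon^{1/p-1/q}$ when $q<p$.

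You obtain the non-strict inequality differently, from a scaling argument with blocks on $[N,2N]$. This is pure power counting on finitely supported sequences. It needs neither Lemma \ref{ll-5} nor Remark \ref{f-re}, and no control of $\varepsilon$-dependent constants.

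You obtain strictness from one fixed borderline sequence $m^{-(\varphi+1)/p}(\log(m+1))^{-s}$ with $s\in(1/p,1/q]$. It lies in $l^p_\varphi$, but its image is not in $L^q_\psi$. That is a slightly stronger conclusion than an unbounded norm ratio.

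The paper's approach is more economical: one test family yields both the non-strict and the strict inequality, and the same $a_\varepsilon$ is reused for the sharp lower bound in the proof of Theorem \ref{last}. Yours is more elementary and more robust. The block argument carries over directly to the endpoint cases $p=\infty$ or $q=\infty$, and to other kernels that are comparable to a power on dyadic blocks.
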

 
\begin{proof}
For  $1\leq p, q<\infty$, we suppose that $\mathbf{H}_{\lambda, \alpha, \beta}$ is bounded from $l_{\varphi}^{p}$ to $L_{\psi}^{q}$.  Take $a=\{a_m\}_{m=1}^{\infty}$ with $a_1=1$ and $a_m=0$ for $m\geq 2.$ Then we see that $\|a\|_{p,\varphi}=1,$ and 
 $$\mathbf{H}_{\lambda, \alpha, \beta}a(x)=\frac{x^{\beta}}{[\max\{1,x\}]^{\lambda}},$$
so that $$\|\mathbf{H}_{\lambda, \alpha, \beta}a\|_{q, \psi}^q=\int_{\mathbb{R}_{+}}\frac{x^{q\beta+\psi}}{[\max\{1,x\}]^{q\lambda}}dx<\infty.$$
This implies that $q\lambda-q\beta-\psi>1$ and $q\beta+\psi+1>0$. That is 
$$-q\beta<\psi+1<q(\lambda-\beta).$$
Now, we let 
$$\rho:=\alpha+\beta+1+\frac{\psi+1}{q}-\frac{\varphi+1}{p}-\lambda.$$  
For $\varepsilon>0$, we take $a_{\varepsilon}:=\{a_m^{\varepsilon}\}_{m=1}^{\infty}$ with
\begin{equation}\label{ff-2}a_m^{\varepsilon}=m^{-\frac{\varphi+1}{p}-\frac{\varepsilon}{p}}, m\in \mathbb{N}. 
\end{equation}
Then \begin{eqnarray}\frac{1}{\varepsilon}=\int_{1}^{\infty}x^{-1-\varepsilon}dx&\leq&\|a_{\varepsilon}\|_{p, \varphi}^p=\sum_{n=1}^{\infty}n^{-1-\varepsilon}\nonumber \\
&<&1+\int_{1}^{\infty}x^{-1-\varepsilon}dx=1+\frac{1}{\varepsilon}.\nonumber\end{eqnarray}
It follows from the boundedness of ${\bf H}_{\lambda, \alpha, \beta}$ that, for a.e. $x\in \mathbb{R}_{+}$,  
$$\mathbf{H}_{\lambda, \alpha, \beta}a_{\varepsilon}(x)=\sum_{m=1}^{\infty}\frac{m^{\alpha}x^{\beta}}{[\max\{m,x\}]^{\lambda}}m^{-\frac{\varphi+1}{p}-\frac{\varepsilon}{p}}<\infty.$$
Consequently, from Lemma \ref{ll-5}, we know that 
\begin{eqnarray}\lambda-\alpha+\frac{\varphi+1}{p}+\frac{\varepsilon}{p}>1,\nonumber \end{eqnarray}
for any $\varepsilon>0$. Hence, in view of Remark \ref{f-re}, we have  
\begin{eqnarray}\label{h-2}\|{\bf H}_{\lambda, \alpha, \beta}a_{\varepsilon}\|_{q,\psi}^q&=&\int_{0}^{\infty}x^{\psi}\left [\sum_{m=1}^{\infty}\frac{m^{\alpha-\frac{\varphi+1}{p}-\frac{\varepsilon}{p}}x^{\beta}}{[\max\{m,x\}]^{\lambda}}\right ]^q dx \nonumber \\ 
&\geq & C_1^{-q}\int_{1}^{\infty}x^{\psi}\left[x^{\beta}(x+1)^{\alpha-\frac{\varphi+1}{p}-\frac{\varepsilon}{p}+1-\lambda}\right]^qdx \nonumber \\ 
&\geq & C_1^{-q}\int_{1}^{\infty}x^{\psi}\left[x^{\beta}(2x)^{\alpha-\frac{\varphi+1}{p}-\frac{\varepsilon}{p}+1-\lambda}\right]^qdx \nonumber \\
&=&  C_1^{-q}C_2^{q}\int_{1}^{\infty}x^{\psi+q(\alpha+\beta+1-\frac{\varphi+1}{p}-\frac{\varepsilon}{p}-\lambda)}dx  \nonumber \\
&=& C_1^{-q}C_2^{q}\int_{1}^{\infty}x^{-1+q\rho-\frac{q\varepsilon}{p}}dx. \nonumber
\end{eqnarray}
Here \begin{equation}\label{cc}C_1=\lambda-\alpha+\frac{\varphi+1}{p}+\frac{\varepsilon}{p}-1, \,\,\, C_2=2^{-C_1}.\end{equation}
Thus, if $\rho>0$, when $\varepsilon<q\rho$, we obtain that $$\int_{1}^{\infty}x^{-1+q\rho-\frac{q\varepsilon}{p}}dx=\infty,$$
so that $\|\mathbf{H}_{\lambda, \alpha, \beta}a_{\varepsilon}\|_{q,\psi}^q=\infty$, which contradicts the boundedness of $\mathbf{H}_{\lambda, \alpha, \beta}$. This implies that $\rho\leq 0$. That is
$$\lambda\geq \alpha+\beta+1+\frac{\psi+1}{q}-\frac{\varphi+1}{p}.$$
To finish the proof, we next only need to show that, when $1\leq q<p<\infty$, if \begin{equation}\label{add-l} 
\begin{cases}
\lambda=\alpha+\beta+1+\frac{\psi+1}{q}-\frac{\varphi+1}{p}, \\
-q\beta<\psi+1<q(\lambda-\beta),\\ 
\end{cases}
\end{equation}  
then ${\bf H}_{\lambda, \alpha, \beta}$ is not bounded from $l_{\varphi}^p$ to $L_{\psi}^q$. We suppose that $1\leq q<p<\infty$ and (\ref{add-l}) holds. For $\varepsilon>0$, we take $a_{\varepsilon}=\{a_m^{\varepsilon}\}_{m=1}^{\infty}$ as above, then $\|a\|_{p, \varphi}^p=\frac{1}{\varepsilon}[1+o(1)]$ as $\varepsilon \rightarrow 0$ and by repeating some above arguments, we have
\begin{eqnarray}\label{h-3}\|{\bf H}_{\lambda, \alpha, \beta}a_{\varepsilon}\|_{q,\psi}^q&\geq&  C_1^{-q}C_2^{q}\int_{1}^{\infty}x^{\psi+q(\alpha+\beta+1-\frac{\varphi+1}{p}-\frac{\varepsilon}{p}-\lambda)}dx  \nonumber \\
&=& C_1^{-q}C_2^{q}\int_{1}^{\infty}x^{-1-\frac{q\varepsilon}{p}}dx=C_1^{-q}C_2^{q}\frac{p}{q\varepsilon}. \nonumber
\end{eqnarray}
Here $C_1$ and $C_2$ are the same as in (\ref{cc}). It follows that
\begin{eqnarray}
\frac{\|{\bf H}_{\lambda, \alpha, \beta}a_{\varepsilon}\|_{q,\psi}}{\|a\|_{p, \varphi}}\geq C_1^{-1}C_2(pq^{-1})^{\frac{1}{q}}\varepsilon^{\frac{1}{p}-\frac{1}{q}}\rightarrow \infty, \nonumber 
\end{eqnarray}
as $\varepsilon \rightarrow 0$ since $p>q$. This means that ${\bf H}_{\lambda, \alpha, \beta}$ is not bounded from $l_{\varphi}^p$ to $L_{\psi}^q$. The proof of Lemma \ref{ll-4-1} is finished.
\end{proof}

\begin{lemma}\label{ll-5-add}
Let $\lambda, \alpha, \beta, \varphi, \psi$ be real numbers and $\mathbf{H}_{\lambda, \alpha, \beta}$ be as in (\ref{ope}). 

{\bf (1)}If $\mathbf{H}_{\lambda, \alpha, \beta}$ is bounded from 
$l_{\varphi}^1$ to $L^{\infty}$, then  
\begin{equation} 
\begin{cases}
\lambda\geq \beta\geq 0, \\
\lambda\geq \alpha+\beta-\varphi. \nonumber 
\end{cases}
\end{equation}

{\bf (2)}If $1<p<\infty$ and $\mathbf{H}_{\lambda, \alpha, \beta}$ is bounded from 
$l_{\varphi}^p$ to $L^{\infty}$, then  
\begin{equation}\label{ad-eq-1}
\begin{cases}
\lambda>\beta\geq0, \\
\lambda\geq\alpha+\beta+1-\frac{\varphi+1}{p}, 
\end{cases}
\end{equation}
or
\begin{equation}\label{ad-eq-2}
\begin{cases}
\lambda\geq \beta\geq0, \\
\lambda>\alpha+\beta+1-\frac{\varphi+1}{p}. 
\end{cases}
\end{equation}
\end{lemma}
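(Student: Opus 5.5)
The plan is to test boundedness against well-chosen sequences. For $p=1$ I would use the unit vectors. For $1<p<\infty$ I would use the duality $(l^p_\varphi)^*\cong l^{p'}_{-\varphi p'/p}$ pointwise in $x$. This turns boundedness into a uniform bound on a single weighted series, which is then analysed with Lemma \ref{ll-5} and Remark \ref{f-re}. Throughout write $K(m,x)=m^{\alpha}x^{\beta}[\max\{m,x\}]^{-\lambda}$, which is continuous in $x$ for each $m$.

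\textbf{Part (1).} Take $a=e_m$, the $m$-th unit vector, so $\|e_m\|_{1,\varphi}=m^{\varphi}$ and ${\bf H}_{\lambda,\alpha,\beta}e_m(x)=K(m,x)$. By continuity the essential supremum equals the supremum, so boundedness gives $\sup_{x>0}K(m,x)\le Cm^{\varphi}$ for every $m$.
\begin{itemize}
\item With $m=1$, the function $x^{\beta}[\max\{1,x\}]^{-\lambda}$ is bounded. Letting $x\to0$ forces $\beta\ge0$, and letting $x\to\infty$ forces $\lambda\ge\beta$.
\item Given $\lambda\ge\beta\ge0$, the supremum over $x$ of $x^{\beta}[\max\{m,x\}]^{-\lambda}$ is attained at $x=m$ and equals $m^{\beta-\lambda}$.
\item Hence $m^{\alpha+\beta-\lambda}\le Cm^{\varphi}$ for all $m\in\mathbb{N}$, i.e. $\lambda\ge\alpha+\beta-\varphi$.
\end{itemize}

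\textbf{Part (2), reduction to one series.} Fix $x$ and a finitely supported $b\ge0$ with $\|b\|_{p'}\le1$. Put $a_m=m^{-\varphi/p}b_m^{p'-1}$, so that $\|a\|_{p,\varphi}\le1$. By continuity in $x$, boundedness yields $\sum_m K(m,x)m^{-\varphi/p}b_m^{p'-1}\le C$ for every $x$, not just almost every $x$. Taking the supremum over such $b$ (equivalently, choosing $b$ as the normalized extremizer on finite truncations) and applying monotone convergence gives
$$x^{\beta p'}S(x)\le C^{p'}\quad\text{for all } x>0,\qquad S(x):=\sum_{m=1}^{\infty}\frac{m^{\tau}}{[\max\{m,x\}]^{\lambda p'}},\qquad \tau=\Big(\alpha-\frac{\varphi}{p}\Big)p'.$$

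\textbf{Part (2), extracting the conditions.}
\begin{itemize}
\item The $m=1$ term alone gives $\beta\ge0$ and $\lambda\ge\beta$, exactly as in Part (1).
\item Finiteness of $S(x)$ and Lemma \ref{ll-5} give $\lambda p'-\tau-1>0$.
\item Remark \ref{f-re} gives $S(x)\ge c(x+1)^{\tau+1-\lambda p'}$. Boundedness of $x^{\beta p'}S(x)$ as $x\to\infty$ then forces $\beta p'+\tau+1-\lambda p'\le0$. Dividing by $p'$, this is $\lambda\ge\alpha+\beta+1-\frac{\varphi+1}{p}$.
\end{itemize}
At this point the disjunction (\ref{ad-eq-1}) or (\ref{ad-eq-2}) holds unless $\lambda=\beta$ and $\lambda=\alpha+\beta+1-\frac{\varphi+1}{p}$ hold simultaneously.

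\textbf{The equality case (main obstacle).} Suppose $\lambda=\beta$ and $\lambda=\alpha+\beta+1-\frac{\varphi+1}{p}$. Unwinding the definition of $\tau$, the second equality gives $\tau+1-\lambda p'+\beta p'=0$, and with $\lambda=\beta$ this becomes $\tau=-1$. Then for $x>1$,
$$x^{\beta p'}S(x)\ge x^{\beta p'}\sum_{m\le x}\frac{m^{-1}}{x^{\lambda p'}}=\sum_{m\le x}m^{-1}\ge \ln x.$$
This is unbounded as $x\to\infty$, a contradiction. So this case is excluded, and one of (\ref{ad-eq-1}) or (\ref{ad-eq-2}) holds.

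The two points needing care are the precise duality step (the passage from a.e. to every $x$ and the truncation argument) and this logarithmic borderline.
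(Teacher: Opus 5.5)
Your proof is correct, but it uses a different mechanism from the paper's.

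\textbf{The paper's route.} The paper tests only against $e_1$, which gives $\lambda\ge\beta\ge0$ just as your $m=1$ term does, and against the power sequences $a_\varepsilon=\{m^{-\frac{\varphi+1}{p}-\frac{\varepsilon}{p}}\}$, treating all $1\le p<\infty$ at once. A lower bound for $\mathbf{H}_{\lambda,\alpha,\beta}a_\varepsilon(x)$ from Remark~\ref{f-re}, followed by the limit $\varepsilon\to0$, yields $\lambda\ge\alpha+\beta+1-\frac{\varphi+1}{p}$; part (1) is the case $p=1$. The doubly borderline case is then excluded by showing $\|\mathbf{H}_{\lambda,\alpha,\beta}a_\varepsilon\|_\infty\ge\frac{p}{2\varepsilon}$ while $\|a_\varepsilon\|_{p,\varphi}\approx\varepsilon^{-1/p}$.

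\textbf{Your route.} You dualize pointwise in $x$, and for $p=1$ you test against every $e_m$. You handle the essential supremum through continuity of the kernel in $x$; the paper relies on this only tacitly, when it evaluates at integer $x$. Combined with the H\"older step in the proof of Theorem~\ref{m-th-3} (or the kernel criterion used for Theorem~\ref{m-th-2}), your reduction is in fact an exact norm formula: $\|\mathbf{H}_{\lambda,\alpha,\beta}\|_{l^p_\varphi\to L^\infty}=\sup_{x>0}(x^{\beta p'}S(x))^{1/p'}$. Necessity and sufficiency then become two halves of one identity. The inequality $\lambda\ge\alpha+\beta+1-\frac{\varphi+1}{p}$ comes out with no $\varepsilon$-limit, and the borderline case reduces to the divergence of the harmonic series at $\tau=-1$.

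\textbf{What the paper's route buys.} Its advantage is uniformity: the same family $a_\varepsilon$ drives Lemma~\ref{ll-4-1} for $L^q_\psi$ targets, where no pointwise duality is available.
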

\begin{proof}
For $1\leq p<\infty$, we suppose that  $\mathbf{H}_{\lambda, \alpha, \beta}$ is bounded from $l_{\varphi}^p$ to $L^{\infty}$. Take $a=\{a_m\}_{m=1}^{\infty}$ with $a_1=1$ and $a_m=0$ for $m\geq 2$, then we have
$\|a\|_{p,\alpha}=1$ and
\begin{equation}
\mathbf{H}_{\lambda, \alpha, \beta}a(x)=\frac{x^{\beta}}{[\max\{1,x\}]^{\lambda}},\nonumber 
\end{equation}   
so that \begin{equation}
\|\mathbf{H}_{\lambda, \alpha, \beta}a\|_{\infty}=\sup_{x\in \mathbb{R}_{+}}\frac{x^{\beta}}{[\max\{1,x\}]^{\lambda}}<\infty.\nonumber 
\end{equation} 
Consequently, when $x\in (0,1]$, we have $$\sup_{x\in (0,1]}\frac{x^{\beta}}{[\max\{1,x\}]^{\lambda}}=\sup_{x\in (0,1]}x^{\beta}<\infty.$$ This implies that $\beta\geq 0.$  When $x>1$, we have 
$$\sup_{x>1}\frac{x^{\beta}}{[\max\{1,x\}]^{\lambda}}=\sup_{x>1}x^{\beta-\lambda}<\infty,$$
This implies that $\lambda\geq \beta.$

Now, for $\varepsilon>0$, we take the same $a_{\varepsilon}=\{a_m^{\varepsilon}\}_{m=1}^{\infty}$ as in (\ref{ff-2}). Then we see that
$\|a_{\varepsilon}\|_{p, \varphi}^p=\frac{1}{\varepsilon}[1+o(1)]$ as $\varepsilon \rightarrow 0$, and 
 \begin{equation}
\mathbf{H}_{\lambda, \alpha, \beta}a_{\varepsilon}(x)=\sum_{m=1}^{\infty}\frac{m^{\alpha-\frac{\varphi+1}{p}-\frac{\varepsilon}{p}}x^{\beta}}{[\max\{m,x\}]^{\lambda}}<\infty,\nonumber 
\end{equation}  
for a.e. $x\in \mathbb{R}_{+}$. By Lemma \ref{ll-5}, we obtain that
$$\lambda-\alpha+\frac{\varphi+1}{p}+\frac{\varepsilon}{p}-1>0,$$
for any $\varepsilon>0$. %This implies that $$\lambda\geq \alpha+1-\frac{\varphi+1}{p}.$$
Furthermore, using again Lemma \ref{ll-5}, we see that, for each $x>0$, 
 \begin{equation}
\mathbf{H}_{\lambda, \alpha, \beta}a_{\varepsilon}(x)\geq \frac{x^{\beta}(x+1)^{\alpha-\frac{\varphi+1}{p}-\frac{\varepsilon}{p}+1-\lambda}}{\lambda-\alpha+\frac{\varphi+1}{p}+\frac{\varepsilon}{p}-1}:=C_1x^{\beta}(x+1)^{\alpha-\frac{\varphi+1}{p}-\frac{\varepsilon}{p}+1-\lambda}.\nonumber 
\end{equation} 
Hence we get that
$$\infty>\|\mathbf{H}_{\lambda, \alpha, \beta}a_{\varepsilon}\|_{\infty} \geq C \sup_{x\in \mathbb{R}_{+}}x^{\beta}(x+1)^{\alpha-\frac{\varphi+1}{p}-\frac{\varepsilon}{p}+1-\lambda}.$$
It follows that 
$$\infty>\sup_{x>1} x^{\beta}(x+1)^{\alpha-\frac{\varphi+1}{p}-\frac{\varepsilon}{p}+1-\lambda}\geq \sup_{x>1}x^{\alpha+\beta-\frac{\varphi+1}{p}-\frac{\varepsilon}{p}+1-\lambda}.$$
This implies that
$$\alpha+\beta-\frac{\varphi+1}{p}-\frac{\varepsilon}{p}+1-\lambda \leq 0,$$
for any $\varepsilon>0$ so that
\begin{equation}\label{ad-eq-3}\lambda\geq \alpha+\beta+1-\frac{\varphi+1}{p}.\end{equation}
From the above arguments, we have proved that the boundedness of $\mathbf{H}_{\lambda, \alpha, \beta}: l_{\varphi}^p \rightarrow L^{\infty}$ implies that  
\begin{equation} 
\begin{cases}
\lambda\geq \beta\geq 0, \\
\lambda\geq \alpha+\beta+1-\frac{\varphi+1}{p}. \nonumber 
\end{cases}
\end{equation}
This means that the part {\bf (1)}, the calse $p=1$, of Lemma \ref{ll-5-add} is true. 

Next, we prove the part {\bf (2)}, the case $1<p<\infty$, of Lemma \ref{ll-5-add}.  
To finish the proof, it is enough to show that $\mathbf{H}_{\lambda, \alpha, \beta}: l_{\varphi}^p \rightarrow L^{\infty}$ is not bounded if $\lambda=\alpha+\beta+1-\frac{\varphi+1}{p}$ and $\lambda=\beta\geq 0$. Now, we assume that
$\lambda=\alpha+\beta+1-\frac{\varphi+1}{p}$ and $\lambda=\beta\geq 0$, it follows that $\alpha+1-\frac{\varphi+1}{p}=0$.  Take $a_{\varepsilon}$ as above, then we have 
$$\|a_{\varepsilon}\|_{p, \varphi}=\frac{1}{{\varepsilon}^{\frac{1}{p}}}[1+o(1)],\,\, \varepsilon \rightarrow 0,$$ 
and for a natural number $x$,
 \begin{eqnarray}
\mathbf{H}_{\lambda, \alpha, \beta}a_{\varepsilon}(x)&=&\sum_{m=1}^{\infty}\frac{x^{\beta}m^{\alpha-\frac{\varphi+1}{p}-\frac{\varepsilon}{p}}}{[\max\{m,x\}]^{\lambda}}
\nonumber \\
&=&\sum_{m=1}^{x}m^{\alpha-\frac{\varphi+1}{p}-\frac{\varepsilon}{p}}+x^{\beta}\sum_{m=x+1}^{\infty}m^{-1-\beta-\frac{\varepsilon}{p}}
\nonumber \\
&\geq & \int_{1}^{x+1}t^{-1-\frac{\varepsilon}{p}}dx+x^{\beta}\int_{x+1}^{\infty}t^{-1-\beta-\frac{\varepsilon}{p}}dx\nonumber \\
&\geq & \frac{p}{\varepsilon}(1-(x+1)^{-\frac{\varepsilon}{p}})+\frac{px^{\beta}}{p\beta+\varepsilon}(x+1)^{-\beta-\varepsilon/p}.\nonumber 
\end{eqnarray} 
We can check that
$$\frac{p}{\varepsilon}(1-(x+1)^{-\frac{\varepsilon}{p}})+\frac{px^{\beta}}{p\beta+\varepsilon}(x+1)^{-\beta-\varepsilon/p}\to \frac{p}{\varepsilon},$$
as $x\to \infty$.  
It follows that $\|\mathbf{H}_{\lambda, \alpha, \beta}a_{\varepsilon}\|_{\infty}\geq \frac{p}{2\varepsilon}$ so that
\[\frac{\|\mathbf{H}_{\lambda, \alpha, \beta}a_{\varepsilon}\|_{\infty}}{\|a_{\varepsilon}\|_{p,\varphi}}\geq \frac{p}{2{\varepsilon}^{1/p'}}[1+o(1)]\to \infty,\,{\text as}\,\,\varepsilon \rightarrow 0.\]
This means that $\mathbf{H}_{\lambda, \alpha, \beta}: l_{\varphi}^p \rightarrow L^{\infty}$ is not bounded when $\lambda=\alpha+\beta+1-\frac{\varphi+1}{p}$ and $\lambda=\beta\geq 0$. This proves the part {\bf (2)} of Lemma \ref{ll-5-add}. The proof of Lemma \ref{ll-5-add} is complete. 
\end{proof}

\section{{\bf Proof of Theorem \ref{m-th-1}}}
\subsection{{\bf Proof of Theorem \ref{m-th-1} }}Note that the necessity of the boundedness of ${\bf H}_{\lambda, \alpha, \beta}$ has been proved by Lemma \ref{ll-4-1}. We only need to prove that, for $1\leq p\leq q<\infty$, if 
\begin{equation}\label{e-1}\lambda\geq \alpha+\beta+1+\frac{\psi+1}{q}-\frac{\varphi+1}{p},\end{equation}
and
\begin{equation}\label{e-2}-q\beta<\psi+1<q(\lambda-\beta).\end{equation}
Then ${\bf H}_{\lambda, \alpha, \beta}$ is bounded $l_{\varphi}^{p}$ to $L_{\psi}^{q}$. First, from (\ref{e-2}), we see that  
\begin{equation}\label{e-3} \lambda> \frac{\psi+1}{q}+\beta >0.\end{equation}
We will divide our proof into the following two cases.
For the sake of simplicity, we will write
$$k(m,x)=\frac{m^{\alpha}x^{\beta}}{[\max\{m,x\}]^{\lambda}}.$$
{\bf Case I. $1<p\leq q<\infty.$} As $\psi+1<q(\lambda-\beta)$, i.e. $q(\lambda-\beta)-\psi-1>0$, then we can find a constant $t>1$ such that 
\begin{equation}\label{add-0}q(\lambda-\beta)-\psi-1>\frac{q\lambda}{t}.\end{equation} 
Let $s>1$ be such that $\frac{1}{s}+\frac{1}{t}=1$.  It follows that
\begin{equation}\label{add-1}-\frac{\psi+1}{q}-\beta+\frac{\lambda}{s}>0.\end{equation} From the left part of (\ref{e-2}), we have
$-\frac{\psi+1}{q}-\beta<0$ so that 
\begin{equation}\label{add-2}L_1:=-\frac{\psi+1}{q}-\beta-\frac{1}{p'}-\frac{\alpha}{s}+\frac{\lambda}{s}<-\frac{1}{p'}-\frac{\alpha}{s}+\frac{\lambda}{s}:=R_1.\end{equation}
Also, we see from (\ref{add-1}), (\ref{add-0}) and $\lambda>0$ that 
\begin{equation}\label{add-3}L_1<L_2:=-\frac{1}{p'}-\frac{\alpha}{s}<-\frac{1}{p'}-\frac{\alpha}{s}+\frac{\lambda}{s}=R_1,\end{equation}
and 
\begin{equation}\label{add-4} L_2<-\frac{\psi+1}{q}-\beta-\frac{1}{p'}-\frac{\alpha}{s}+\lambda:=R_2.\end{equation}
Consequently, it follows from (\ref{add-2}), (\ref{add-3}) and (\ref{add-4}) that $(L_1, R_1)\cup (L_2, R_2)$ so that we can take a constant $A$ such that 
\begin{equation}\label{e-4} 
-\frac{1}{p'}-\frac{\alpha}{s}<A<-\frac{1}{p'}-\frac{\alpha}{s}+\frac{\lambda}{s},\end{equation}
and 
\begin{equation}\label{e-5}-\frac{\psi+1}{q}-\beta-\frac{\alpha}{s}-\frac{1}{p'}+\frac{\lambda}{s}<A<-\frac{\psi+1}{q}-\beta-\frac{\alpha}{s}-\frac{1}{p'}+\lambda.\end{equation}
It is easy to see that (\ref{e-4}) is equivalent to
\begin{equation}\label{equi-1}
\begin{cases}
\frac{\alpha}{s}p'+p'A>-1, \\
\frac{\lambda}{s}p'-\frac{\alpha}{s}p'-p'A-1>0,
\end{cases}
\end{equation}
and (\ref{e-5}) is equivalent to
\begin{equation}\label{equi-2}
\begin{cases}
\frac{\beta}{t}q+B+\psi>-1, \\
\frac{\lambda}{t}p-\frac{\beta}{t}p-B-\psi-1>0.
\end{cases}
\end{equation}
Here 
$$B=q[\frac{\beta}{s}+\frac{\alpha}{s}+A+\frac{1}{p'}-\frac{\lambda}{s}].$$
Now, for $a=\{a_m\}_{m=1}^{\infty}\in l_{\varphi}^{p}$ with $a_m\geq 0$ for any $m\in \mathbb{N}$, we have, for $x\in \mathbb{R}_{+}$, 
\begin{eqnarray}
{\bf H}_{\lambda, \alpha, \beta}a(x)&=&\sum_{m=1}^{\infty}k(m,x)a_m\nonumber \\
&=& \sum_{m=1}^{\infty}[k(m,x)]^{\frac{1}{s}}m^{A}\cdot[k(m,x)]^{\frac{1}{t}}m^{-A}a_m. \nonumber
\end{eqnarray}
By H\"{o}lder's inequality, we obtain that
\begin{eqnarray}
{\bf H}_{\lambda, \alpha, \beta}a(x)&\leq & \Big\{\sum_{m=1}^{\infty}[k(m,x)]^{\frac{p'}{s}}m^{p'A}\Big\}^{\frac{1}{p'}}\Big\{[k(m,x)]^{\frac{p}{t}}m^{-pA}a_m^p\Big\}^{\frac{1}{p}} \nonumber \\
&=& \Big\{\sum_{m=1}^{\infty}\frac{m^{\frac{\alpha}{s}p'+p'A}x^{\frac{\beta}{s}p'}}{[\max\{m,x\}]^{\frac{\lambda}{s}p'}}\Big\}^{\frac{1}{p'}}\Big\{\sum_{m=1}^{\infty}[k(m,x)]^{\frac{p}{t}}m^{-pA}a_m^p\Big\}^{\frac{1}{p}}
\nonumber \\
&:=& {\bf S}_1^{^{\frac{1}{p'}}}{\bf S}_2^{^{\frac{1}{p}}}.\nonumber
\end{eqnarray}
By Lemma \ref{ll-5} and (\ref{equi-1}), we see that there is a constant ${\bf C}_1>0$ such that
\begin{eqnarray}
{\bf S}_1\leq {\bf C}_1 x^{\frac{\alpha}{s}p'+\frac{\beta}{s}p'+p'A+1-\frac{\lambda}{s}p'}.\nonumber
\end{eqnarray}
Note that we can take 
\begin{eqnarray}
{\bf C}_1=\frac{1}{\frac{\alpha}{s}p'+p'A+1}+\frac{1}{\frac{\lambda}{s}p'-\frac{\alpha}{s}p'-p'A-1}, \nonumber
\end{eqnarray}
if \begin{equation}\label{con-1}\frac{\alpha}{s}p'+p'A\leq 0.\end{equation}
Consequently, we obtain that
 \begin{eqnarray}
\|{\bf H}_{\lambda, \alpha, \beta}a\|_{q, \psi}&\leq & \Big[\int_{0}^{\infty}{\bf S}_1^{^{\frac{q}{p'}}}{\bf S}_2^{^{\frac{q}{p}}}x^{\psi}dx\Big]^{\frac{1}{q}}\nonumber \\
&=&{\bf C}_1^{\frac{1}{p'}}\Big[\int_{0}^{\infty}x^{q(\frac{\alpha}{s}+\frac{\beta}{s}+A+\frac{1}{p'}-\frac{\lambda}{s})}\nonumber \\
&&\quad\quad\quad \times \Big\{\sum_{n=1}^{\infty}[k(m,x)]^{\frac{p}{t}}m^{-pA}a_m^p\Big\}^{\frac{q}{p}}x^{\psi}dx\Big]^{\frac{1}{q}}
\nonumber \\
&=& {\bf C}_1^{\frac{1}{p'}}\Big[\int_{0}^{\infty}x^{B+\psi}\Big\{\sum_{m=1}^{\infty}[k(m,x)]^{\frac{p}{t}}m^{-pA}a_m^p\Big\}^{\frac{q}{p}}dx\Big]^{\frac{1}{q}}. \nonumber
\end{eqnarray}
Then it follows from Minkowski's inequality that 
 \begin{eqnarray}
\|{\bf H}_{\lambda, \alpha, \beta}a\|_{q, \psi}&\leq & {\bf C}_1^{\frac{1}{p'}}\Big[\int_{0}^{\infty}x^{B+\psi}\Big\{\sum_{m=1}^{\infty}[k(m,x)]^{\frac{p}{t}}m^{-pA}a_m^p\Big\}^{\frac{q}{p}}dx\Big]^{\frac{p}{q}\cdot\frac{1}{p}} \nonumber \\
&\leq & {\bf C}_1^{\frac{1}{p'}}\Big[\sum_{m=1}^{\infty}\Big\{\int_{0}^{\infty}[k(m,x)]^{\frac{q}{t}}x^{B+\psi}dx\Big\}^{\frac{p}{q}}m^{-pA}a_m^p\Big]^{\frac{1}{p}} \nonumber \\
&=& {\bf C}_1^{\frac{1}{p'}}\Big[\sum_{m=1}^{\infty}\Big\{\int_{0}^{\infty}\frac{m^{\frac{\alpha}{t}q}x^{\frac{\beta}{t}q+B+\psi}}{[\max\{m,x\}]^{\frac{\lambda}{t}q}}dx\Big\}^{\frac{p}{q}}m^{-pA}a_m^p\Big]^{\frac{1}{p}}\nonumber \\
&:=& {\bf C}_1^{\frac{1}{p'}}\Big[\sum_{m=1}^{\infty}{\bf I}_1^{\frac{p}{q}}m^{-pA}a_m^p\Big]^{\frac{1}{p}}. \nonumber
\end{eqnarray}
On the other hand, by using (\ref{equi-2}) and Lemma \ref{ll-4}, we know that   
\begin{eqnarray}
{\bf I}_1={\bf C}_2 m^{\frac{\beta}{t}q+B+\psi+1-\frac{\lambda}{t}q+\frac{\alpha}{t}q}. \nonumber
\end{eqnarray}
Here
\begin{eqnarray}
{\bf C}_2=\frac{1}{\frac{\beta}{t}q+B+\psi+1}+\frac{1}{\frac{\lambda}{t}q-\frac{\beta}{t}q-B-\psi-1}.\nonumber
\end{eqnarray}
It follows that
 \begin{eqnarray}
\|{\bf H}_{\lambda, \alpha, \beta}a\|_{q, \psi}&\leq &{\bf C}_1^{\frac{1}{p'}}{\bf C}_2^{\frac{1}{q}}\Big[\sum_{m=1}^{\infty}m^{\frac{p}{q}[\frac{\beta}{t}q+B+\psi+1-\frac{\lambda}{t}q+\frac{\alpha}{t}q]}m^{-pA}a_m^p\Big]^{\frac{1}{p}}. \nonumber
\end{eqnarray}
Meanwhile, we note that
\begin{eqnarray}
\lefteqn{\frac{p}{q}[\frac{\beta}{t}q+B+\psi+1-\frac{\lambda}{t}q+\frac{\alpha}{t}q]-pA}\nonumber \\
&&=p[\frac{\beta}{t}+\frac{B}{q}+\frac{\psi+1}{q}-\frac{\lambda}{t}-A+\frac{\alpha}{t}]\nonumber \\
&&=p[\frac{\beta}{t}+(\frac{\alpha+\beta}{s}+A+\frac{1}{p'}-\frac{\lambda}{s})+\frac{\psi+1}{q}-\frac{\lambda}{t}-A+\frac{\alpha}{t}] \nonumber \\
&&=p[\beta+\alpha+\frac{1}{p'}-\lambda+\frac{\psi+1}{q}]\leq p[\frac{1}{p'}-1+\frac{\varphi+1}{p}]=\varphi. \nonumber
\end{eqnarray}
This means that  \begin{eqnarray}
\|{\bf H}_{\lambda, \alpha, \beta}a\|_{q, \psi}&\leq &{\bf C}_1^{\frac{1}{p'}}{\bf C}_2^{\frac{1}{q}}\|a\|_{p, \varphi}, \nonumber
\end{eqnarray}
so that ${\bf H}_{\lambda, \alpha, \beta}$ is bounded from $l_{\varphi}^{p}$ to $L_{\psi}^{q}$ in this case.

{\bf Case II. $1=p\leq q<\infty$}
Recall that the following conditions are satisfied. 
\begin{equation}\label{ee-2}\lambda>0,\,\, -q\beta<\psi+1<q(\lambda-\beta).\nonumber\end{equation}
From $-q\beta<\psi+1$, we know that there is a constant $s>1$ such that 
\begin{equation}\label{ine-1}\psi+1>-q\beta+\frac{\lambda}{s}.\end{equation}
Let $t>1$ be such that $\frac{1}{s}+\frac{1}{t}=1$. Note that $\psi+1<q(\lambda-\beta)$ so that 
\begin{equation}\label{ine-2}
\frac{\beta-\lambda}{s}<\frac{\lambda-\beta}{t}-\frac{\psi+1}{q},
\end{equation}
and from (\ref{ine-1}) we have  \begin{equation}\label{ine-3}
-\frac{\beta}{t}-\frac{\psi+1}{q}<\frac{\beta-\lambda}{s}<\frac{\beta}{s}.
\end{equation}
Then we see from (\ref{ine-2}) and (\ref{ine-3}) that we can take a constant $D$ such that
\begin{equation}\label{ine-4}
-\frac{\beta}{t}-\frac{\psi+1}{q}<D<\frac{\lambda-\beta}{t}-\frac{\psi+1}{q},
\end{equation}
and 
\begin{equation}\label{ine-5}
\frac{\beta-\lambda}{s}<D<\frac{\beta}{s}.
\end{equation}
Consequently, it is easy to see that (\ref{ine-4}) is equivalent to 
\begin{equation}\label{equi-3}
\begin{cases}
q\frac{\beta}{t}+qD+\psi+1>0, \\
q\frac{\lambda-\beta}{t}-qD-\psi-1>0, 
\end{cases}
\end{equation}
and  (\ref{ine-5}) is equivalent to  \begin{equation}\label{equi-4}
\begin{cases}
\frac{\lambda-\beta}{s}+D>0, \\
\frac{\beta}{s}-D>0. 
\end{cases}
\end{equation}
Hence we see from (\ref{equi-4}) and Lemma \ref{ll-l} that there is a constant ${\bf C}_3>0$ such that 
\begin{equation}
\sup_{m\in \mathbb{N}}\frac{m^{\frac{\lambda-\beta}{s}+D}}{[\max\{m,x\}]^{\frac{\lambda}{s}}} \leq {\bf C}_3 x^{D-\frac{\beta}{s}}, \nonumber 
\end{equation}
for all $x\in \mathbb{R}_{+}$. It follows that, for $a=\{a_m\}_{m=1}^{\infty}\in l_{\varphi}^{1}$ with $a_m\geq 0$ for any $m\in \mathbb{N}$, and $x\in \mathbb{R}_{+}$,
\begin{eqnarray}
{\bf H}_{\lambda, \alpha, \beta}a(x)&=&\sum_{m=1}^{\infty}[k(m,x)]^\frac{1}{s}[k(m,x)]^{\frac{1}{t}}a_m \nonumber \\
&=&\sum_{m=1}^{\infty}\frac{m^{\frac{\lambda-\beta}{s}+D}}{[\max\{m,x\}]^{\frac{\lambda}{s}}}m^{\frac{\alpha+\beta-\lambda}{s}-D}x^{\frac{\beta}{s}}[k(m,x)]^{\frac{1}{t}}a_m \nonumber \\
&\leq & {\bf C}_3 x^{D}\sum_{m=1}^{\infty}m^{\frac{\alpha+\beta-\lambda}{s}-D}[k(m,x)]^{\frac{1}{t}}a_m. \nonumber 
\end{eqnarray}
Consequently, we have
\begin{equation}
\|{\bf H}_{\lambda, \alpha, \beta}a\|_{q, \psi}\leq {\bf C}_3\Big[\int_{0}^{\infty}\Big(\sum_{m=1}^{\infty}m^{\frac{\alpha+\beta-\lambda}{s}-D}[k(m,x)]^{\frac{1}{t}}a_m\Big)^q x^{qD+\psi}dx\Big]^{\frac{1}{q}}. \nonumber
\end{equation}
By using Minkowski's inequality again, we obtain that 
\begin{eqnarray}
\|{\bf H}_{\lambda, \alpha, \beta}a\|_{q, \psi}&\leq& {\bf C}_3\sum_{m=1}^{\infty}\Big(\int_{0}^{\infty}[k(m,x)]^{\frac{q}{t}}x^{qD+\psi}dx\Big)^\frac{1}{q}m^{\frac{\alpha+\beta-\lambda}{s}-D}a_m \nonumber \\
&=& {\bf C}_3\sum_{m=1}^{\infty}\Big(\int_{0}^{\infty}\frac{m^{\frac{\alpha}{t}q}x^{\frac{\beta}{t}q+qD+\psi}}{[\max\{m,x\}]^{\frac{\lambda}{t}q}}dx\Big)^\frac{1}{q}m^{\frac{\alpha+\beta-\lambda}{s}-D}a_m\nonumber \\
&=&  {\bf C}_3\sum_{m=1}^{\infty}{{\bf I}_2}^\frac{1}{q}m^{\frac{\alpha+\beta-\lambda}{s}-D}a_m. \nonumber
\end{eqnarray}
From (\ref{equi-3}) and Lemma \ref{ll-5}, we see that there is a constant ${\bf C}_4>0$ such that 
\begin{eqnarray}
{{\bf I}_2}={\bf C}_4 m^{\frac{\alpha}{t}q+\frac{\beta}{t}q+qD+\psi+1-\frac{\lambda}{t}q}. \nonumber 
\end{eqnarray}
It follows that
\begin{eqnarray}
\|{\bf H}_{\lambda, \alpha, \beta}a\|_{q, \psi}&\leq& {\bf C}_3{\bf C}_4^{\frac{1}{q}}\sum_{m=1}^{\infty}m^{\frac{1}{q}[\frac{\alpha}{t}q+\frac{\beta}{t}q+qD+\psi+1-\frac{\lambda}{t}q]+\frac{\alpha+\beta-\lambda}{s}-D}a_m. \nonumber
\end{eqnarray}
Meanwhile, we note from $\lambda \geq \alpha+\beta+\frac{\psi+1}{q}-\varphi$ that
\begin{eqnarray}
\lefteqn{\frac{1}{q}[\frac{\alpha}{t}q+\frac{\beta}{t}q+qD+\psi+1-\frac{\lambda}{t}q]+\frac{\alpha+\beta-\lambda}{s}-D}\nonumber \\
&&=\alpha+\beta-\lambda+\frac{\psi+1}{q}\leq \varphi.  \nonumber
\end{eqnarray}
This means that \begin{eqnarray}
\|{\bf H}_{\lambda, \alpha, \beta}a\|_{q, \psi}&\leq& {\bf C}_3{\bf C}_4^{\frac{1}{q}}\|a\|_{1, \varphi}, \nonumber
\end{eqnarray}
so that ${\bf H}_{\lambda, \alpha, \beta}$ is bounded from $l_{\varphi}^{p}$ to $L_{\psi}^{q}$ for $1=p\leq q<\infty$. This finishes the proof of Theorem \ref{m-th-1}.

\section{{\bf Proof of Theorem \ref{m-th-2} and \ref{m-th-3}}}
\subsection{\bf Proof of Theorem \ref{m-th-2} }First, we note that the only if part has been proved by {\bf (1)} of Lemma \ref{ll-5-add}. We only need to prove the if part. We see that  ${\bf H}_{\lambda, \alpha, \beta}$ is bounded from $l_{\varphi}^1$ to $L^{\infty}$ is equivalent to the operator 
$$\widehat{{\bf H}}_{\lambda, \alpha, \beta}a(x):=\sum_{m=1}^{\infty} \frac{m^{\alpha-\varphi}x^{\beta}}{[\max\{m,x\}]^{\lambda}}a_m,\,\, a=\{a_m\}_{m=1}^{\infty}, \,\, n\in \mathbb{N},$$
is bounded from $l^1$ to $L^{\infty}$. Meanwhile, from \cite[Proposition 5.4]{Tao}, we know that $\widehat{{\bf H}}_{\lambda, \alpha, \beta}: l^1\rightarrow L^{\infty}$ is bounded if and only if  
\begin{equation}\label{l-l}\sup_{m\in \mathbb{N},x\in \mathbb{R}_{+}}\frac{m^{\alpha-\varphi}n^{\beta}}{[\max\{m,x\}]^{\lambda}}<\infty.\end{equation}
It is easy to see from Lemma \ref{ll-l} that, if \begin{equation}\label{th-2-1}
\begin{cases}
\lambda\geq \beta\geq 0, \\
\lambda\geq \alpha+\beta-\varphi.\nonumber
\end{cases}
\end{equation}
then (\ref{l-l}) holds so that $\widehat{{\bf H}}_{\lambda, \alpha, \beta}: l^1\rightarrow l^{\infty}$ is bounded. This proves the if part of Theorem \ref{m-th-2} and the proof of Theorem \ref{m-th-2} is finished.

\subsection{{\bf Proof of Theorem \ref{m-th-3}}}
We first note that the only if part has been proved by {\bf (2)} of Lemma \ref{ll-5-add}.  We only need to prove that, for $1<p<\infty$ , if 
\begin{equation}\label{ad-eq-1-1}
\begin{cases}
\lambda>\beta\geq 0, \\
\lambda\geq \alpha+\beta+1-\frac{\varphi+1}{p}, 
\end{cases}
\end{equation}
or
\begin{equation}\label{ad-eq-2-1}
\begin{cases}
\lambda\geq \beta\geq 0, \\
\lambda>\alpha+\beta+1-\frac{\varphi+1}{p}.
\end{cases}
\end{equation}
then ${\bf H}_{\lambda, \alpha, \beta}$ is bounded from $l_{\varphi}^{p}$ to $L^{\infty}$.  We first prove that (\ref{ad-eq-1-1}) implies the boundedness of ${\bf H}_{\lambda, \alpha, \beta}$. Note that when (\ref{ad-eq-1-1}) holds, we have  $\lambda>\alpha+1-\frac{\varphi+1}{p}$. Then, for $a=\{a_m\}_{m=1}^{\infty}\in l_{\varphi}^p$ with $a_m\geq 0$ for $m\in \mathbb{N}$, by using H\"{o}lder's inequality, we have
\begin{eqnarray}\label{a-1}
\|{\bf H}_{\lambda, \alpha, \beta}a\|_{\infty}&=&\sup_{x\in \mathbb{R}_{+}}\sum_{m=1}^{\infty}\frac{m^{\alpha}x^{\beta}}{[\max\{m,x\}]^{\lambda}}a_m \\
&=&\sup_{x\in \mathbb{R}_{+}}\sum_{m=1}^{\infty}\frac{m^{\alpha-\frac{\varphi}{p}}x^{\beta}}{[\max\{m,x\}]^{\lambda}}\cdot[a_m m^{\frac{\varphi}{p}}] \nonumber \\
&\leq & \sup_{x\in \mathbb{R}_{+}}\Big\{\sum_{m=1}^{\infty}\frac{m^{p'(\alpha-\frac{\varphi}{p})}x^{p'\beta}}{[\max\{m,x\}]^{p'\lambda}}\Big\}^{\frac{1}{p'}}\cdot\|a\|_{p, \varphi}\nonumber \\
&=& \|a\|_{p, \varphi}\sup_{x\in \mathbb{R}_{+}}x^{\beta}\Big\{\sum_{m=1}^{\infty}\frac{m^{p'(\alpha-\frac{\varphi}{p})}}{[\max\{m,x\}]^{p'\lambda}}\Big\}^{\frac{1}{p'}}.\nonumber
\end{eqnarray}

First, we see from $p'(\alpha-\frac{\varphi}{p})-p'\lambda<-1$, i.e., $\lambda>\alpha+1-\frac{\varphi+1}{p}$, that
\begin{equation}\label{a-2}
\sup_{x\in (0,1]}x^{\beta}\Big\{\sum_{m=1}^{\infty}\frac{m^{p'(\alpha-\frac{\varphi}{p})}}{[\max\{m,x\}]^{p'\lambda}}\Big\}^{\frac{1}{p'}}=\sup_{x\in (0,1]}x^{\beta}\Big\{\sum_{m=1}^{\infty}{m^{p'(\alpha-\frac{\varphi}{p})-p'\lambda}}\Big\}^{\frac{1}{p'}}<\infty. 
\end{equation}

Second, when $-1<p'(\alpha-\frac{\varphi}{p})$, from Lemma \ref{ll-5} and 
$$\alpha-\frac{\varphi}{p}+\beta+\frac{1}{p'}-\lambda=\alpha+\beta+1-\frac{\varphi+1}{p}-\lambda\leq 0,$$ we have
\begin{eqnarray}\label{a-3}
\sup_{x>1}x^{\beta}\Big\{\sum_{m=1}^{\infty}\frac{m^{p'(\alpha-\frac{\varphi}{p})}}{[\max\{m,x\}]^{p'\lambda}}\Big\}^{\frac{1}{p'}}&\leq& C_1\sup_{x>1}x^{\beta}x^{\frac{1}{p'}[p'(\alpha-\frac{\varphi}{p'})+1-p'\lambda]}
  \\
&=&C_1\sup_{x>1}x^{\alpha-\frac{\varphi}{p}+\beta+\frac{1}{p'}-\lambda}<\infty.\nonumber
\end{eqnarray}

Also, when $p'(\alpha-\frac{\varphi}{p})\leq -1$, note that $\beta-\lambda<0$ and $p'\lambda-1-p'(\alpha-\frac{\varphi}{p})>0$, then from Lemma \ref{ll-5-1} we obtain that
 \begin{eqnarray}\label{a-4}
\lefteqn{\sup_{x>1}x^{\beta}\Big\{\sum_{m=1}^{\infty}\frac{m^{p'(\alpha-\frac{\varphi}{p})}}{[\max\{m,x\}]^{p'\lambda}}\Big\}^{\frac{1}{p'}}}
\nonumber \\
&&=\sup_{x>1}\Big\{x^{p'\beta}\sum_{m=1}^{\infty}\frac{m^{p'(\alpha-\frac{\varphi}{p})}}{[\max\{m,x\}]^{p'\lambda}}\Big\}^{\frac{1}{p'}}
\leq C_2^{\frac{1}{p'}}<\infty. 
\end{eqnarray}
Here, $C_2>0$ is independent of $x$. Consequently, we see from (\ref{a-1})-(\ref{a-4}) that there is a constant $C_3>0$ such that $\|{\bf H}_{\lambda, \alpha, \beta}a\|_{\infty}\leq C_3 \|a\|_{p, \varphi}$ so that ${\bf H}_{\lambda, \alpha, \beta}$ is bounded from $l_{\varphi}^{p}$ to $L^{\infty}$. 

We next prove that  (\ref{ad-eq-2-1}) implies the boundedness of ${\bf H}_{\lambda, \alpha, \beta}$. Note that when (\ref{ad-eq-2-1}) holds, we also have  $\lambda>\alpha+1-\frac{\varphi+1}{p}$. Then, for $a=\{a_m\}_{m=1}^{\infty}\in l_{\varphi}^p$ with $a_m\geq 0$ for $m\in \mathbb{N}$, repeating the above arguments, we can obtain that 
\begin{eqnarray}\label{x-1}
\|{\bf H}_{\lambda, \alpha, \beta}a\|_{\infty}\leq \|a\|_{p, \varphi}\sup_{x\in \mathbb{R}_{+}}x^{\beta}\Big\{\sum_{m=1}^{\infty}\frac{m^{p'(\alpha-\frac{\varphi}{p})}}{[\max\{m,x\}]^{p'\lambda}}\Big\}^{\frac{1}{p'}}, 
\end{eqnarray}
and
\begin{equation}\label{x-2}
\sup_{x\in (0,1]}x^{\beta}\Big\{\sum_{m=1}^{\infty}\frac{m^{p'(\alpha-\frac{\varphi}{p})}}{[\max\{m,x\}]^{p'\lambda}}\Big\}^{\frac{1}{p'}}<\infty, 
\end{equation}
and when $-1<p'(\alpha-\frac{\varphi}{p})$, 
\begin{eqnarray}\label{x-3}
\sup_{x>1}x^{\beta}\Big\{\sum_{m=1}^{\infty}\frac{m^{p'(\alpha-\frac{\varphi}{p})}}{[\max\{m,x\}]^{p'\lambda}}\Big\}^{\frac{1}{p'}}<\infty. 
\end{eqnarray}
Furthermore, if $\lambda<\beta$, then when $p'(\alpha-\frac{\varphi}{p})\leq -1$, as above, we can obtain that 
 \begin{eqnarray}\label{x-4}
\sup_{x>1}x^{\beta}\Big\{\sum_{m=1}^{\infty}\frac{m^{p'(\alpha-\frac{\varphi}{p})}}{[\max\{m,x\}]^{p'\lambda}}\Big\}^{\frac{1}{p'}}<\infty. 
\end{eqnarray}
Meanwhile, if $\lambda=\beta$, then from (\ref{ad-eq-2-1}), we know that $\alpha+1-\frac{\varphi+1}{p}<0,$ that is $p'(\alpha-\frac{\varphi}{p})<-1$. It follows from Lemma \ref{ll-5-1} again that (\ref{x-4}) still hold. 
Consequently, the boundedness of ${\bf H}_{\lambda, \alpha, \beta}$ follows from (\ref{x-1})-(\ref{x-4}). This proves Theorem \ref{m-th-3}. 

\section{{\bf Proof of Theorem \ref{m-th-3-1}}}
From Lemma \ref{ll-2}, we know that  ${\bf H}_{\lambda, \alpha, \beta}$ is bounded from $l^{\infty}$ to $L^{\infty}$ if and only if 
\begin{equation}\label{noa}\sup_{x\in \mathbb{R}_{+}}\sum_{m=1}^{\infty}\frac{m^{\alpha}x^{\beta}}{[\max\{m,x\}]^{\lambda}}<\infty.\end{equation}
Hence it is enough to prove that (\ref{noa}) holds if and only if 
\begin{equation}\label{ad-e-2}
\begin{cases}
\lambda\geq\beta\geq 0, \\
\lambda>\alpha+\beta+1.
\end{cases}
\end{equation}
We suppose that (\ref{noa}) holds, then from Lemma \ref{ll-5},  we first know that $\lambda-\alpha>1.$ 
Furthermore, when $x\in (0,1]$, we have 
\begin{equation}\label{ad-e-3}\infty >\sup_{x\in (0,1]}\sum_{m=1}^{\infty}\frac{m^{\alpha}x^{\beta}}{[\max\{m,x\}]^{\lambda}}=\sup_{x\in (0,1]}x^{\beta}\sum_{m=1}^{\infty}m^{\alpha-\lambda}.\nonumber  \end{equation}
This implies that $\beta\geq 0.$ Meanwhile, when $x>1$, from Remark \ref{f-re}, we know that 
\begin{equation}\label{ad-e-4}\infty>\sup_{x>1}\sum_{m=1}^{\infty}\frac{m^{\alpha}x^{\beta}}{[\max\{m,x\}]^{\lambda}}\geq C_1\sup_{x>1}x^{\beta}(x+1)^{\alpha+1-\lambda}\geq C_1\sup_{x>1}x^{\alpha+\beta+1-\lambda}. \nonumber\end{equation}
Here $C_1=(\lambda-\alpha-1)^{-1}$.  This implies that $\alpha+\beta+1-\lambda\leq 0$, that is $\lambda\geq \alpha+\beta+1.$ 
Also, when $x>1$, 
$$\infty>\sup_{x>1}x^{\beta}\sum_{m=1}^{\lceil x \rceil}\frac{m^{\alpha}}{{x}^{\lambda}}\geq \sup_{x>1}x^{\beta-\lambda}.$$
This implies that $\beta\leq \lambda$. Collecting the arguments above, we see that, if (\ref{noa}) holds, then  
\begin{equation}\label{ad-e-5}
\begin{cases}
\lambda\geq \beta\geq 0,\\ 
\lambda>\alpha+1, \\
\lambda\geq \alpha+\beta+1, 
\end{cases}
\end{equation}
which is equivalent to (\ref{ad-e-2}). This proves that (\ref{noa}) implies that (\ref{ad-e-2}) holds. 

Now, we assume that (\ref{ad-e-2}) holds, then we easily see that 
$$\sup_{x\in (0,1]}\sum_{m=1}^{\infty}\frac{m^{\alpha}x^{\beta}}{[\max\{m,x\}]^{\lambda}}=\sup_{x\in (0,1]}x^{\beta}\sum_{m=1}^{\infty}m^{\alpha-\lambda}<\infty.$$
On the other hand, when $\alpha>-1$, by Lemma \ref{ll-5}, we know that there is a constant $C_2>0$ such that  
$$\sup_{x>1}\sum_{m=1}^{\infty}\frac{m^{\alpha}x^{\beta}}{[\max\{m,x\}]^{\lambda}}\leq C_2 \sup_{x>1}x^{\alpha+\beta+1-\lambda}<\infty.$$
Meanwhile, when $\alpha\leq -1$, note that $\beta<\lambda$ in this case, by Lemma \ref{ll-5-1}, we know that there is a constant $C_3>0$ such that  
$$\sup_{x>1}\sum_{m=1}^{\infty}\frac{m^{\alpha}x^{\beta}}{[\max\{m,x\}]^{\lambda}}\leq C_3 \sup_{x>1}x^{\alpha+\beta+1-\lambda}<\infty.$$
It follows that (\ref{ad-e-2})) implies (\ref{noa}). Theorem \ref{m-th-3-1} is proved.

\section{{\bf Proof of Theorem \ref{th-r-10}, \ref{th-r-1} and \ref{th-r-2}}}
\subsection{{\bf Proof of Theorem \ref{th-r-10}}}
The only if part has been implied in Lemma \ref{ll-4-1}. We will prove the if part by using the duality. For any $g\in L^{\infty},$
we have 
\begin{eqnarray}\label{jia-e-1} \lefteqn{\int_{\mathbb{R}_{+}}g(x)x^{\psi}\Big[\sum_{m=1}^{\infty}\frac{m^{\alpha}x^{\beta}}{[\max\{m, x\}]^{\lambda}}a_m\Big]dx} \nonumber \\
&& \leq \|g\|_{\infty} \sum_{m=1}^{\infty}\Big[\int_{\mathbb{R}_{+}}\frac{m^{\alpha}x^{\beta+\psi}}{[\max\{m, x\}]^{\lambda}}dx\Big]|a_m| \nonumber \\
&& =\|g\|_{\infty} (\frac{1}{\beta+1+\psi}+\frac{1}{\beta+\psi-\lambda+1}) \sum_{m=1}^{\infty} m^{\alpha+\beta+\psi-\lambda+1}|a_m|.
\end{eqnarray}
Meanwhile, by H\"older's inequality, we have 
\begin{eqnarray}\label{jia-e-2}
 \sum_{m=1}^{\infty} m^{\alpha+\beta+\psi-\lambda+1}|a_m|&=&\sum_{m=1}^{\infty} m^{\alpha+\beta+\psi-\lambda+1-\frac{\varphi}{p}}|a_m|m^{\frac{\varphi}{p}} \nonumber \\
 &\leq& \Big[\sum_{m=1}^{\infty} m^{p'(\alpha+\beta+\psi-\lambda+1+\frac{\varphi}{p})}\Big]^{1/p'} \|a\|_{p, \varphi}. 
\end{eqnarray}
As $\lambda>\alpha+\beta+2+\psi-\frac{\varphi}{p}-\frac{1}{p}$, so $$p'(\alpha+\beta+\psi-\lambda+1-\frac{\varphi}{p})<p'(\frac{1}{p}-1)=-1.$$
This implies that  $$\sum_{m=1}^{\infty} m^{p'(\alpha+\beta+\psi-\lambda+1+\frac{\varphi}{p})}<+\infty.$$
It follows from (\ref{jia-e-1}) and  (\ref{jia-e-2}) that $$\int_{\mathbb{R}_{+}}g(x)x^{\psi}\Big[\sum_{m=1}^{\infty}\frac{m^{\alpha}x^{\beta}}{[\max\{m, x\}]^{\lambda}}a_m\Big]dx<+\infty,$$
for any $g\in L^{\infty}$. This means that  $\mathbf{H}_{\lambda, \alpha, \beta}$ is bounded from
$l_{\varphi}^p$ to $L_{\psi}^q$. This proves Theorem \ref{th-r-10}.

\subsection{{\bf Proof of Theorem \ref{th-r-1}}}
Since the only if part has been proved by Lemma \ref{ll-4-1}. We only need to prove that, if $\varphi+1<p(\alpha+1)$ and \begin{equation}\label{good-1}
\begin{cases}
\lambda>\alpha+\beta+1+\frac{\psi+1}{q}-\frac{\varphi+1}{p}, \\
-q\beta<\psi+1,  
\end{cases}
\end{equation}
then  $\mathbf{H}_{\lambda, \alpha, \beta}$ is bounded from $l_{\varphi}^{p}$ to $L_{\psi}^{q}$. Note that  $\mathbf{H}_{\lambda, \alpha, \beta}$ is bounded from $l_{\varphi}^{p}$ to $L_{\psi}^{q}$ is equivalent to the operator
$$\widehat{{\bf H}}_{\lambda, \alpha, \beta}a(x):=\sum_{m=1}^{\infty} \frac{m^{\alpha-\frac{\varphi}{p}}x^{\beta+\frac{\psi}{q}}}{[\max\{m,x\}]^{\lambda}}a_m,\,\, a=\{a_m\}_{m=1}^{\infty}, \,\, n\in \mathbb{N},$$
is bounded from $l^{p}$ to $L^{q}$. Now, for $\varepsilon>0$, we take $u_{\varepsilon}=\{u_m\}_{m=1}^{\infty}$ with $u_{m}=m^{-\frac{1}{p}-\frac{\varepsilon}{p}}$. Then we have $\|u_{\varepsilon}\|_{p}^p=\frac{1}{\varepsilon}[1+o(1)]$ as $\varepsilon \rightarrow 0$.
We let 
$$Iu(n):=\Big(\int_{\mathbb{R}_{+}}\frac{n^{\alpha-\frac{\varphi}{p}}x^{\beta+\frac{\psi}{q}}}{[\max\{n,x\}]^{\lambda}}\Big(\sum_{m=1}^{\infty}\frac{m^{\alpha-\frac{\varphi}{p}}x^{\beta+\frac{\psi}{q}}}{[\max\{m,x\}]^{\lambda}}u_m\Big)^{q-1}dx\Big)^{p'-1},\,\, n\in \mathbb{N}.$$
Note from $\varphi+1<p(\alpha+1)$ and Lemma \ref{ll-5} that 
\begin{eqnarray}
\sum_{m=1}^{\infty}\frac{m^{\alpha-\frac{\varphi}{p}}x^{\beta+\frac{\psi}{q}}}{[\max\{m,x\}]^{\lambda}}u_m\leq C_1x^{\alpha+\beta+1-\frac{\varphi+1}{p}-\frac{\varepsilon}{p}+\frac{\psi}{q}-\lambda},\nonumber 
\end{eqnarray}
when $\varepsilon<p(\alpha+1)-\varphi-1$. Here $C_1>0$ is independent of $x$. Then it follows that 
\begin{equation}
Iu(n)\leq C_1^{(q-1)(p'-1)}\Big(\int_{\mathbb{R}_{+}}\frac{n^{\alpha-\frac{\varphi}{p}}x^{\beta+\frac{\psi}{q}}}{[\max\{n,x\}]^{\lambda}}x^{(q-1)[\alpha+\beta+1-\frac{\varphi+1}{p}-\frac{\varepsilon}{p}+\frac{\psi}{q}-\lambda]}dx\Big)^{p'-1}, \nonumber\\
\end{equation}
for $0<\varepsilon<p(\alpha+1)-\varphi-1$. Consequently, from Lemma \ref{ll-4}, we have 
\begin{equation}
Iu(n)\leq C_1^{(q-1)(p'-1)}C_2^{p'-1}n^{(p'-1)\{\alpha-\frac{\varphi}{p}+\beta+\frac{\psi}{q}+(q-1)[\alpha+\beta+1-\frac{\varphi+1}{p}-\frac{\varepsilon}{p}+\frac{\psi}{q}-\lambda]+1-\lambda\}}.\nonumber
\end{equation}
On the other hand, as $\lambda>\alpha+\beta+1+\frac{\psi+1}{q}-\frac{\varphi+1}{p}$, then we can write
$$\lambda=\delta+\alpha+\beta+1+\frac{\psi+1}{q}-\frac{\varphi+1}{p}.$$
Here $\delta>0$. It follows that, when $(1-\frac{q}{p})\varepsilon \leq q\delta$, i.e., $0<\varepsilon<pq\delta(p-q)^{-1}$,
\begin{eqnarray}
\lefteqn{(p'-1)\{\alpha-\frac{\varphi}{p}+\beta+\frac{\psi}{q}+(q-1)[\alpha+\beta+1-\frac{\varphi+1}{p}-\frac{\varepsilon}{p}+\frac{\psi}{q}-\lambda]+1-\lambda\}} \nonumber
\\
&&\qquad\qquad\quad=(p'-1)[q(\alpha+\beta+1-\lambda-\frac{\varphi}{p}+\frac{\psi}{q})-(q-1)\frac{1+\varepsilon}{p}] \nonumber \\
&&\qquad\qquad\quad=(p'-1)[q(\frac{1}{p}-\frac{1}{q}-\delta)-(q-1)\frac{1+\varepsilon}{p}]\nonumber \\
&&\qquad\qquad\quad=(p'-1)[-1+\frac{1+\varepsilon}{p}-\frac{q\varepsilon}{p}-q\delta]\nonumber \\
&&\qquad\qquad\quad\leq (p'-1)[-1+\frac{1+\varepsilon}{p}-\frac{q\varepsilon}{p}-(1-\frac{q}{p})\varepsilon]=-\frac{1+\varepsilon}{p}.\nonumber\end{eqnarray}
Hence we obtain that
$$ 
Iu(n)\leq C_1^{(q-1)(p'-1)}C_2^{p'-1}n^{-\frac{\varepsilon+1}{p}}=C_1^{(q-1)(p'-1)}C_2^{p'-1}u_n, 
$$
for all $n\in \mathbb{N}$, when $0<\varepsilon<\min\{p(\alpha+1)-\varphi-1,pq\delta(p-q)^{-1}\}.$

Consequently, from Lemma \ref{ll-3}, we know that the operator $\widehat{{\bf H}}_{\lambda, \alpha, \beta}$ is bounded from $l^{p}$ to $L^{q}$. That is to say  $\mathbf{H}_{\lambda, \alpha, \beta}$ is bounded from $l_{\varphi}^{p}$ to $L_{\psi}^{q}$.
This proves Theorem \ref{th-r-1}.

\subsection{{\bf Proof of Theorem \ref{th-r-2}}}
From Lemma \ref{ll-2}, we know that $\mathbf{H}_{\lambda, \alpha, \beta}$ is bounded from $l^{\infty}$ to $L_{\psi}^{q}$ if and only if 
\begin{equation}
{\bf I}:=\int_{\mathbb{R}_{+}}\Big[\sum_{m=1}^{\infty}\frac{m^{\alpha}x^{\beta+\frac{\psi}{q}}}{[\max\{m,x\}]^{\lambda}}\Big]^q dx=
\int_{\mathbb{R}_{+}}x^{q\beta+\psi}\Big[\sum_{m=1}^{\infty}\frac{m^{\alpha}}{[\max\{m,x\}]^{\lambda}}\Big]^q dx<\infty. \nonumber
\end{equation}
We let
$${\bf I}_1:=\int_{0}^{1}x^{q\beta+\psi}\Big[\sum_{m=1}^{\infty}\frac{m^{\alpha}}{[\max\{m,x\}]^{\lambda}}\Big]^qdx=\int_{0}^{1}x^{q\beta+\psi}\Big[\sum_{m=1}^{\infty}m^{\alpha-\lambda}\Big]^q dx,$$
$${\bf I}_2:=\int_{1}^{\infty}x^{q\beta+\psi}\Big[\sum_{m=1}^{\infty}\frac{m^{\alpha}}{[\max\{m,x\}]^{\lambda}}\Big]^q dx.$$
We suppose that $\mathbf{H}_{\lambda, \alpha, \beta}$ is bounded from $l^{\infty}$ to $L_{\psi}^{q}$, i.e ${\bf I}<\infty$. Then from ${\bf I}_1<\infty$, we have $\alpha-\lambda<-1$ and $q\beta+\psi>-1$.   
Furthermore, from the Remark \ref{f-re}, we know that, for each $x\geq 1$, 
\begin{equation}
\sum_{m=1}^{\infty}\frac{m^{\alpha}}{[\max\{m,x\}]^{\lambda}}\geq \frac{(x+1)^{\alpha+1-\lambda}}{\lambda-\alpha-1}\geq \frac{2^{\alpha+1-\lambda}}{\lambda-\alpha-1}x^{\alpha+1-\lambda}. \nonumber
\end{equation}
Hence we see from ${\bf I}_2<\infty$ that 
$$\int_{1}^{\infty}x^{q\beta+\psi+q(\alpha+1-\lambda)}dx<\infty,$$
so that $q\beta+\psi+q(\alpha+1-\lambda)<-1,$ which is equivalent to 
$$\lambda>\alpha+\beta+1+\frac{\psi+1}{q}.$$ This proves the only if part of Theorem \ref{th-r-2}.

Next, we will show that, if \begin{equation}\label{jia-1}
\begin{cases}
\alpha\geq -1,\\
\lambda>\alpha+\beta+1+\frac{\psi+1}{q}, \\
-q\beta<\psi+1, 
\end{cases}
\end{equation}
then ${\bf I}<\infty$. As $-q\beta<\psi+1$, i.e., $\beta+\frac{\psi+1}{q}>0$, then we get that
$$\lambda>\alpha+\beta+1+\frac{\psi+1}{q}>\alpha+1.$$
It follows that ${\bf I}_1<\infty$.  To prove $\mathbf{I}_2<\infty$, we divide our proof into two cases.

(1) When $\alpha>-1$, in view of $\lambda>\alpha+1$, by Lemma \ref{ll-5}, we know that there is a constant $C_3>0$ such that
$${\bf I}_2\leq C_3 \int_{1}^{\infty}x^{q\beta+\psi}x^{q(\alpha+1-\lambda)}dx=C_3\int_{1}^{\infty}x^{q\beta+\psi+q(\alpha+1-\lambda)}dx<\infty,$$
Consequently, ${\bf I}_2<\infty$ since $\lambda>\alpha+\beta+1+\frac{\psi+1}{q}$. 

(2) When $\alpha=-1$, since $\lambda>\alpha+\beta+1+\frac{\psi+1}{q}=\beta+\frac{\psi+1}{q}>0$, let $\tau:=\lambda-\beta-\frac{\psi+1}{q}>0$, then we see from the proof of Lemma \ref{ll-5-1} that there is a constant $\epsilon\in (0,\tau)$ such that
\begin{eqnarray}{\bf I}_2&\leq& C_4 \int_{1}^{\infty}x^{q\beta+\psi}x^{-q\lambda+q\epsilon}dx\nonumber \\
&=&C_4\int_{1}^{\infty}x^{q(\beta+\frac{\psi+1}{q}-\lambda)-1+q\epsilon}dx \nonumber \\
&\leq & C_4 \int_{1}^{\infty}x^{-1-q(\tau-\epsilon)}dx<\infty.\nonumber\end{eqnarray}

Combining (1) and (2), we obtain that $\mathbf{I}=\mathbf{I}_1+\mathbf{I}_2<\infty$ when (\ref{jia-1}) is satisfied. Now, the if part of Theorem \ref{th-r-2} is proved so that we finish the proof of Theorem \ref{th-r-2}. 

\section{{\bf Sharp norm estimates of $\mathbf{H}_{\lambda, \alpha, \beta}$ for some special cases}}
In this section, we will establish sharp norm estimates of $\mathbf{H}_{\lambda, \alpha, \beta}$ for certain special cases. We shall prove that
\begin{theorem}\label{last}
Let $1<p<\infty$. Let $\lambda, \alpha, \beta, \varphi, \psi$ be real numbers with
\begin{equation}\label{=c}\lambda=\alpha+\beta+1+\frac{\psi-\varphi}{p},\end{equation}
and ${\bf H}_{\lambda, \alpha, \beta}$ be as in (\ref{ope}). Then $\mathbf{H}_{\lambda, \alpha, \beta}$ is bounded from $l_{\varphi}^{p}$ to $L_{\psi}^{p}$ if and only if 
\begin{equation} \label{ineq-1}
-p\beta<\psi+1<p(\lambda-\beta),  
\end{equation}
or equivalently, 
\begin{equation}\label{ineq-2} 
p(\alpha+1-\lambda)<\varphi+1<p(\alpha+1). 
\end{equation}
Moreover, if the conditions (\ref{=c}), (\ref{ineq-1})(or (\ref{ineq-2})), and 
\begin{equation}\label{ineq-3}
\varphi+1\geq p\alpha,
\end{equation}
%\begin{equation}\label{ineq-3}
%\begin{cases}
%\varphi+1\geq p\alpha, \\
%\psi+1\leq p(1-\beta), 
%\end{cases}
%\end{equation}
are all satisfied, then $\mathbf{H}_{\lambda, \alpha, \beta}$ is bounded from $l_{\varphi}^{p}$ to $L_{\psi}^{p}$, and the norm of $\mathbf{H}_{\lambda, \alpha, \beta}$ is given by
 $$\|\mathbf{H}_{\lambda, \alpha, \beta}\|_{l_{\varphi}^{p}\rightarrow L_{\psi}^p}=\frac{1}{\alpha+1-\frac{1}{p}(\varphi+1)}+\frac{1}{\beta+\frac{1}{p}(\psi+1)}.$$ 
Here, $$\|\mathbf{H}_{\lambda, \alpha, \beta}\|_{l_{\varphi}^{p}\rightarrow L_{\psi}^p}=\sup_{a\in l_{\varphi}^{p}}\frac{\|\mathbf{H}_{\lambda,\alpha,\beta}a\|_{p, \psi}}{\|a\|_{p, \varphi}}.$$
\end{theorem}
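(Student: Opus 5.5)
The plan has three parts: the boundedness criterion from Theorem \ref{m-th-1}, an upper bound via a weighted Schur/H\"older argument, and a matching lower bound from a family of almost-extremal sequences.

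\textbf{Boundedness criterion.} Take $q=p$ in Theorem \ref{m-th-1}. Under (\ref{=c}) we have $\lambda=\alpha+\beta+1+\frac{\psi+1}{p}-\frac{\varphi+1}{p}$, so the first condition of that theorem holds with equality. Hence boundedness is equivalent to (\ref{ineq-1}). Substituting $\psi+1=p(\lambda-\alpha-\beta-1)+\varphi+1$ from (\ref{=c}) into (\ref{ineq-1}) turns it into (\ref{ineq-2}); this is a routine rearrangement.

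\textbf{Upper bound for the norm.} Put $\mu=\alpha+1-\frac{\varphi+1}{p}$ and $\nu=\beta+\frac{\psi+1}{p}$. Then $\mu,\nu>0$ and $\mu+\nu=\lambda$. I will rerun the H\"older argument of Case I of Theorem \ref{m-th-1} with $s=p'$, $t=p$ and explicit weights. Concretely, write
$$k(m,x)a_m=\big[k(m,x)^{1/p'}m^{A}x^{-E}\big]\big[k(m,x)^{1/p}m^{-A}x^{E}a_m\big],$$
and choose $A,E$ so that two things happen. First, the sum $\sum_m k(m,x)m^{p'A}x^{-p'E}$ is bounded by $Cx^{\gamma}$ using part (1) of Lemma \ref{ll-5}, where the integral comparison gives the sharp constant. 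Second, the integral $\int_0^\infty k(m,x)x^{pE+\psi+p\gamma/p'}\,dx$, which Lemma \ref{ll-4} evaluates exactly, equals $C'm^{\varphi-pA}$.

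The natural choice is the weight making both computations produce the same constant $\frac1\mu+\frac1\nu$. That is, pick $A$ so that the exponent of $m$ in the sum is $\tau=\alpha+p'A=-1+\frac{p'}{p}(\ldots)$, with the matched value $\frac{\varphi+1}{p}-\ldots$, in the standard way. This gives $\|\mathbf H a\|_{p,\psi}\le(\frac1\mu+\frac1\nu)\|a\|_{p,\varphi}$.

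The only place where the discrete sum is compared with an integral is Lemma \ref{ll-5}(1). That comparison needs the summand $m^{\tau}/[\max\{m,x\}]^{\lambda}$, with $\tau\le 0$, to be nonincreasing in $m$. Condition (\ref{ineq-3}), $\varphi+1\ge p\alpha$, is exactly what should make the chosen exponent satisfy $\tau\le 0$. Checking this is the main obstacle: the weight exponent is forced by the sharpness requirement, and I must verify that (\ref{ineq-3}) gives $-1<\tau\le0$. Failing that, I would adjust $E$ within its admissible range.

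\textbf{Lower bound for the norm.} Test with $a_m^{\varepsilon}=m^{-\frac{\varphi+1}{p}-\frac{\varepsilon}{p}}$, for which $\|a_\varepsilon\|_{p,\varphi}^p=\frac1\varepsilon(1+o(1))$. Write $\mathbf H a_\varepsilon(x)$ as $x^\beta$ times a sum, and bound the sum from below by the corresponding integral over $[1,\infty)$, using monotonicity on $m\le x$ and $m\ge x$ separately. After the substitution $y=m/x$ this gives
$$\mathbf H a_\varepsilon(x)\ge x^{\beta+\mu-\lambda-\varepsilon/p}\Big(\int_{1/x}^{\infty}\frac{y^{\mu-1-\varepsilon/p}}{[\max\{y,1\}]^{\lambda}}\,dy-o(1)\Big).$$
Integrating against $x^\psi$ over $x>N$ then yields
$$\|\mathbf H a_\varepsilon\|_{p,\psi}^p\ge\frac{1}{\varepsilon}\Big(\frac1{\mu-\varepsilon/p}+\frac1{\nu+\varepsilon/p}-o(1)\Big)^p.$$
Let $\varepsilon\to0$ and then $N\to\infty$ to obtain the matching lower bound $\frac1\mu+\frac1\nu$.
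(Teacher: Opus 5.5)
Your proposal is essentially the paper's proof. The criterion comes from Theorem~\ref{m-th-1} with $q=p$, the upper bound reruns Case I with $s=p'$, $t=p$, and the lower bound uses $a_\varepsilon$; there the paper ties the cutoff to $\varepsilon$ through $z=1/\varepsilon$ and applies Fatou's lemma, while your fixed-$N$ double limit works equally well once you restore the harmless factor $N^{-\varepsilon}$ from $\int_N^\infty x^{-1-\varepsilon}\,dx$. The weight you left unspecified is just $E=0$ and $p'A=-\frac{\varphi+1}{p}$ (the paper's $A=-\frac{\varphi+1}{pp'}$), so $\tau=\alpha-\frac{\varphi+1}{p}=\mu-1$, hence $-1<\tau\le 0$ follows at once from $\mu>0$ and (\ref{ineq-3}), both $\mathbf{C}_1$ and $\mathbf{C}_2$ equal $\frac{1}{\mu}+\frac{1}{\nu}$, and no adjustment of $E$ is needed.
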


\begin{proof}Note that if (\ref{=c}) holds, then it is easy to see that (\ref{ineq-1}) is equivalent to (\ref{ineq-2}). By Lemma \ref{ll-4} and repeating the arguments in the proof of {\bf Case ${\bf I}$} of Theorem \ref{m-th-1}, we can prove that ${\bf H}_{\lambda, \alpha, \beta}$ is bounded from $l_{\varphi}^{p}$ to $L_{\psi}^{p}$ if and only if (\ref{ineq-1})(or (\ref{ineq-2})) holds. Next, we will prove that if (\ref{=c}), (\ref{ineq-1}) and (\ref{ineq-3}) are all satisfied, then  
 $$\|{\bf H}_{\lambda, \alpha, \beta}\|_{l_{\varphi}^{p}\rightarrow l_{\psi}^p}=\frac{1}{\alpha+1-\frac{1}{p}(\varphi+1)}+\frac{1}{\beta+\frac{1}{p}(\psi+1)}.$$

We shall borrow some arguments in the proof of {\bf Case ${\bf I}$} of Theorem \ref{m-th-1}. Actually, when $p=q$, in the proof of {\bf Case ${\bf I}$} of Theorem \ref{m-th-1}, we can take $s=p', t=p$ and $A=-\frac{\varphi+1}{pp'}$. Then, when (\ref{ineq-3}) holds so that (\ref{con-1}) is satisfied, we have
$${\bf C}_1={\bf C}_2=\frac{1}{\alpha+1-\frac{1}{p}(\varphi+1)}+\frac{1}{\beta+\frac{1}{p}(\psi+1)}.$$
Consequently, we have $$\|{\bf H}_{\lambda, \alpha, \beta}\|_{l_{\varphi}^{p}\rightarrow L_{\psi}^p}\leq \frac{1}{\alpha+1-\frac{1}{p}(\varphi+1)}+\frac{1}{\beta+\frac{1}{p}(\psi+1)}.$$

Finally, we prove that  $$\|{\bf H}_{\lambda, \alpha, \beta}\|_{l_{\varphi}^{p}\rightarrow L_{\psi}^p}\geq \frac{1}{\alpha+1-\frac{1}{p}(\varphi+1)}+\frac{1}{\beta+\frac{1}{p}(\psi+1)}.$$ 
For $0<\varepsilon<p(\alpha+1)-(\varphi+1)$, we take the same $a_{\varepsilon}$ as in (\ref{ff-2}). Then \begin{equation}\label{jia}\|a_{\varepsilon}\|_{p, \alpha}^p=\frac{1}{\varepsilon}[1+o(1)],\,\, {\text {as}}\,\, \varepsilon \rightarrow 0^{+},\end{equation}
and from $\alpha-\frac{\varphi+1}{p}-\frac{\varepsilon}{p}<0$ we have
\begin{eqnarray}
\|{\bf H}_{\lambda, \alpha, \beta}a_{\varepsilon}\|_{p, \psi}&=&\Big[\int_{0}^{\infty}x^{\psi}\Big(\sum_{m=1}^{\infty}\frac{m^{\alpha}x^{\beta}}{[\max\{m,x\}]^{\lambda}}m^{-\frac{\varphi+1}{p}-\frac{\varepsilon}{p}}\Big)^p\Big]^{\frac{1}{p}} \nonumber 
\\
&\geq & \Big[\int_{0}^{\infty}x^{\psi}\Big(\int_{1}^{\infty}\frac{y^{\alpha}x^{\beta}}{[\max\{y,x\}]^{\lambda}}y^{-\frac{\varphi+1}{p}-\frac{\varepsilon}{p}}dy\Big)^p dx\Big]^{\frac{1}{p}} \nonumber \\
&=& \Big[\int_{0}^{\infty}x^{\psi+p(\alpha+\beta+1-\frac{\varphi+1}{p}-\frac{\varepsilon}{p}-\lambda)}\Big(\int_{\frac{1}{x}}^{\infty}\frac{t^{\alpha-\frac{\varphi+1}{p}-\frac{\varepsilon}{p}-\lambda}}{[\max\{t,1\}]^{\lambda}}dt\Big)^p dx\Big]^{\frac{1}{p}}\nonumber
\\
&=& \Big[\int_{0}^{\infty}x^{-\varepsilon-1}\Big(\int_{\frac{1}{x}}^{\infty}\frac{t^{\alpha-\frac{\varphi+1}{p}-\frac{\varepsilon}{p}}}{[\max\{t,1\}]^{\lambda}}dt\Big)^p dx\Big]^{\frac{1}{p}}\nonumber
\end{eqnarray} 
Now, we take $\varepsilon=\frac{1}{z}$ with $z>[p(\alpha+1)-(\varphi+1)]^{-1}$. Then we have 
\begin{eqnarray}
\|{\bf H}_{\lambda, \alpha, \beta}a_{\varepsilon}\|_{p, \psi}&\geq & \Big[\int_{z}^{\infty}x^{-\varepsilon-1}\Big(\int_{\frac{1}{z}}^{\infty}\frac{t^{\alpha-\frac{\varphi+1}{p}-\frac{\varepsilon}{p}}}{[\max\{t,1\}]^{\lambda}}dt\Big)^p dx\Big]^{\frac{1}{p}} \nonumber 
\\
&=& (\varepsilon^{\varepsilon-1})^{\frac{1}{p}}\int_{\frac{1}{z}}^{\infty}\frac{t^{\alpha-\frac{\varphi+1}{p}-\frac{\varepsilon}{p}}}{[\max\{t,1\}]^{\lambda}}dt.  \nonumber
\end{eqnarray} 
It follows that 
\begin{eqnarray}\label{m-s} \|\mathbf{H}_{\lambda, \alpha, \beta}\|_{l_{\varphi}^p \rightarrow L_{\psi}^p} \geq \frac{\|{\bf H}_{\lambda, \alpha, \beta}a_{\varepsilon}\|_{p, \psi}}{\|a_{\varepsilon}\|_{p, \alpha}}
\geq {\varepsilon}^{\frac{\varepsilon}{p}}[1+o(1)]^{-\frac{1}{p}}\int_{\frac{1}{z}}^{\infty}\frac{t^{\alpha-\frac{\varphi+1}{p}-\frac{\varepsilon}{p}}}{[\max\{t,1\}]^{\lambda}}dt.\end{eqnarray}
Here $o(1)$ is the same as in (\ref{jia}). We let 
$$\mathcal{S}_{{z}}=\{t\in \mathbb{R}_{+}: t \geq \frac{1}{{z}}\}.$$
Then 
\begin{eqnarray}\label{l-s}\int_{\frac{1}{z}}^{\infty}\frac{t^{\alpha-\frac{\varphi+1}{p}-\frac{1}{{z}p}}}{[\max\{t,1\}]^{\lambda}}dt
=\int_{0}^{\infty}\frac{\chi_{\mathcal{S}_{z}}(t)t^{\alpha-\frac{\varphi+1}{p}-\frac{1}{{z}p}}}{[\max\{t,1\}]^{\lambda}}dt. 
\end{eqnarray}
Note that, for each $t\in \mathbb{R}_{+}$,
\begin{eqnarray}\label{n-s}\frac{\chi_{\mathcal{S}_{z}}(t)t^{\alpha-\frac{\varphi+1}{p}-\frac{1}{{z}p}}}{[\max\{t,1\}]^{\lambda}}\rightarrow \frac{t^{\alpha-\frac{\varphi+1}{p}}}{[\max\{t,1\}]^{\lambda}},\,\, {\text as}\,\, {z}\rightarrow \infty,
\end{eqnarray}
and $ {\varepsilon}^{\frac{\varepsilon}{p}}[1+o(1)]^{-\frac{1}{p}}\rightarrow 1$ as ${z}\rightarrow \infty.$ 

Consequently, from (\ref{m-s}), (\ref{l-s}), (\ref{n-s}) and by using Fatou's lemma, we obtain that 
\begin{eqnarray*} \lefteqn{\|\mathbf{H}_{\lambda, \alpha, \beta}\|_{l_{\varphi}^p \rightarrow L_{\psi}^p} \geq \liminf_{z\to \infty} {\varepsilon}^{\frac{\varepsilon}{p}}[1+o(1)]^{-\frac{1}{p}}\int_{0}^{\infty}\frac{\chi_{\mathcal{S}_{z}}(t)t^{\alpha-\frac{\varphi+1}{p}-\frac{1}{{z}p}}}{[\max\{t,1\}]^{\lambda}}dt}\nonumber \\
&&\geq 
\int_{0}^{\infty} \frac{t^{\alpha-\frac{\varphi+1}{p}}}{[\max\{t,1\}]^{\lambda}}\,dt=\frac{1}{\alpha+1-\frac{1}{p}(\varphi+1)}+\frac{1}{\beta+\frac{1}{p}(\psi+1)}. \end{eqnarray*}
This proves Theorem \ref{last}.
\end{proof} 
\section{{\bf Final remarks}}
\begin{remark}We first point out that the condition $\alpha+1\geq 0$ in Theorem \ref{th-r-2} is necessary. As a counterexample, we consider the case $\alpha<-1$, $\lambda=0$ and $q, \psi$ satisfying that 
$0<\beta+\frac{\psi+1}{q}<-\alpha-1$.
Then 
\begin{eqnarray}
{\bf I}&=&\int_{\mathbb{R}_{+}}\Big[\sum_{m=1}^{\infty}\frac{m^{\alpha}x^{\beta+\frac{\psi}{q}}}{[\max\{m,x\}]^{\lambda}}\Big]^q dx\nonumber
\\&=&
\int_{\mathbb{R}_{+}}x^{q\beta+\psi}\Big[\sum_{m=1}^{\infty}m^{\alpha}\Big]^q dx=C(\alpha, q)\int_{\mathbb{R}_{+}}x^{q\beta+\psi}dx. \nonumber
\end{eqnarray}
But we always have \[\int_{\mathbb{R}_{+}}x^{q\beta+\psi}dx=+\infty, \,{\text{ for any}}\, q,\beta, \psi.\] This means that the operator is not bounded in this case and therefore the condition in Theorem \ref{th-r-2} can not be removed. \end{remark}

\begin{remark}
We finally show by another counterexample that the condition 
\(\varphi+1 < p(\alpha+1)\) in Theorem \ref{th-r-1} is necessary and can not be removed.  

We take $p=3, q=2, \alpha=-1/2,\beta=0,\varphi=2, \psi=0,\lambda=1/2$. Then 
$\varphi+1=3>p(\alpha+1)=3/2$, and  
\[
0=-q\beta<\psi+1=1,
\]
and
\[
\frac{1}{2}=\lambda>\alpha+\beta+1+\frac{\psi+1}{q}-\frac{\varphi+1}{p}=0.
\]
Now, for \(\varepsilon>0\), we consider the sequence $a_{\varepsilon}=\{a_m\}_{m=1}^{\infty}$ with 
\[a_m = m^{-\frac{\varphi+1}{p}-\frac{\varepsilon}{p}} = m^{-1-\frac{\varepsilon}{2}},\, m\in \mathbb{N}.\]
It is easy to see that
\[
\|a\|_{p,\varphi}^p=\sum_{m=1}^\infty m^{-1-\varepsilon}< \infty,
\]
so that \(a_{\varepsilon} \in l_\varphi^p\). Note that for \(x>1\),
\[
\mathbf{H}_{\lambda,\alpha,\beta}a_{\varepsilon}(x)=\sum_{m=1}^\infty \frac{m^{-3/2-\varepsilon/2}}{[\max\{m,x\}]^{1/2}}.
\]
It follows that for any $x>1$,
\[
\mathbf{H}_{\lambda,\alpha,\beta}a_{\varepsilon}(x)\geq\sum_{m=1}^{\lceil x \rceil}\frac{m^{-3/2-\varepsilon/2}}{x^{1/2}}\geq x^{-1/2}\sum_{m=1}^{\lceil x \rceil}m^{-3/2-\varepsilon/2}\geq x^{-1/2},
\]
Consequently, we obtain that 
\[
\|\mathbf{H}_{\lambda,\alpha,\beta}a_{\varepsilon}\|_{q,\psi}^q=\int_0^\infty x^\psi |\mathbf{H}_{\lambda,\alpha,\beta}a_{\varepsilon}(x)|^q dx
\geq \int_1^\infty x^{-1} dx=+\infty.
\]
Hence $\mathbf{H}_{\lambda,\alpha,\beta}$ is not bounded from \(l_\varphi^p\) to \(L_\psi^q\), although all the conditions of Theorem \ref{th-r-1} are satisfied except the one \(\phi+1 < p(\alpha+1)\). This means that 
the condition \(\varphi+1 < p(\alpha+1)\) in Theorem \ref{th-r-1} can not be removed. 
\end{remark}

\section{{\bf Acknowledgements}}
The author was supported by the National Natural Science Foundation of China (Grant No. 11501157).
\begin{spacing}{1.2} % 1.5倍行距

\end{spacing}
\end{document}